\documentclass[a4paper,12pt]{article}

\usepackage[T2A]{fontenc}
\usepackage[utf8]{inputenc}
\usepackage[russian, english]{babel}

\usepackage[a4paper,top=1cm,bottom=2cm,left=1.5cm,right=1.5cm,marginparwidth=1.75cm]{geometry}

\usepackage{amsmath}
\usepackage{amssymb}

\usepackage{graphicx}
\graphicspath{{./images/}}

\usepackage{hyperref}
\usepackage{array}
\usepackage{csquotes}
\usepackage[shortlabels]{enumitem}
\usepackage[backend=biber,
            sorting=nyt,
            style=nature,
            maxbibnames=2,
            autolang=other,
            ]{biblatex}

\usepackage{amsthm}

\newtheorem{theorem}{Theorem}[section]
\newtheorem{corollary}{Corollary}[theorem]
\newtheorem{lemma}[theorem]{Lemma}
\newtheorem{conjecture}[theorem]{Conjecture}

\theoremstyle{definition}
\newtheorem*{definition}{Definition}

\def\ZZ{{\mathbb Z}}

\def\NN{{\mathbb N}}

\def\HG{{\mathcal G}}
\def\gplus#1{\mathop{+\!\!_{_#1}}}
\def\gtimes#1{\mathop{\times\!\!_{_#1}}}
\def\hats{{\sc Hats\ }}
\def\hgn{\text{\rm HG}}

\usepackage{tikz}
\usetikzlibrary{
  arrows,
  mindmap,
  trees,
  shadows,
  graphs,
  graphs.standard,
  decorations.markings,
  intersections,
  calc
}

\tikzstyle{win_nodes}=[circle, draw=black, inner sep=1.5, fill=black]
\tikzstyle{win_edges}=[draw=black, thick]
\tikzstyle{lose_nodes}=[circle, draw=black, inner sep=1.5, fill=black]
\tikzstyle{lose_edges}=[draw=black, thick]

\definecolor{lgreen}{RGB}{180, 180, 180}
\tikzstyle{vecArrow} = [thick, decoration={markings,mark=at position
   1 with {\arrow[semithick]{open triangle 60}}},
   double distance=1.4pt, shorten >= 5.5pt,
   preaction = {decorate},
   postaction = {draw,line width=1.4pt, white,shorten >= 4.5pt}]
\tikzstyle{innerWhite} = [semithick, white,line width=1.4pt, shorten >= 4.5pt]

\title{Cliques and constructors in ``Hats'' game}
\author{Konstantin Kokhas\thanks{St.Petersburg State University, St.Petersburg,
    Russia. Email: kpk@arbital.ru} \and Aleksei Latyshev\thanks{ITMO University,
    St.Petersburg, Russia. Email: aleksei.s.latyshev@gmail.com} \and Vadim
  Retinsky\thanks{National Research University Higher School of Economics,
    Moscow, Russia. Email: viretinskiy@gmail.com}}

\addbibresource{main.bib}

\font\shmt=chess10
\font\shmtt=chess20

\begin{document}
\maketitle
\begin{abstract}
  The following general variant of deterministic Hats game is analyzed.
  Several sages wearing colored hats occupy the vertices of a graph, the $k$-th sage
  can have  hats of one of $h(k)$ colors. Each sage tries to guess the color of
  his own hat merely on the basis of observing the hats of his neighbors
  without exchanging any information. A predetermined guessing strategy is
  winning if it guarantees at least one correct individual guess for every
  assignment of colors.

  For complete graphs and for cycles we solve the problem of describing functions
  $h(k)$ for which the sages win. We demonstrate here winning strategies for the
  sages on complete graphs, and analyze the Hats game on almost complete graphs.
  We develop ``theory of constructors'', that is
  a collection of theorems demonstrating how one can construct new graphs for
  which the sages win. We define also new game ``Check by rook'' which is
  equivalent to Hats game on 4-cycle and give complete analysis of this game.
\end{abstract}

\section{Introduction}

The Hats game is an interesting mathematical puzzle attracting attention of
many mathematicians for many years. In the classical version of the problem
there is a set of $n \geq 2$ players (sages) and adversary, who puts a hat of
one of $n$ colors to the head of each sage. Each sage sees the hats of the other
sages, but does not see his own hat. Taking into account this information only, he tries to guess the color of hat he is wearing. The goal of the sages is to guarantee that at least one of 
them guesses the color of hat correctly whatever the hats arrangement is. The
players are allowed to discuss and fix a strategy before the hat assignment.
After that any communication is prohibited. When the sages simultaneously say
their guesses, winning condition would be checked (is it true, that at least one
of the sages guesses correctly). The problem is  ``whether the sages can
guarantee a win?''

The answer to the above problem is ``Yes!''. It can be justified gracefully.
Let us enumerate the sages and identify the colors of hats with residues modulo
$n$. Every sage sees every hat except his own. Let the ``i''-th sage checks the
hypothesis that the sum of all colors, including his own, equals $i$ modulo $n$
and say the corresponding remainder. It's clear that the hypothesis of exactly
one of the sage is true, regardless of the hats arrangement. Thus this
sage guesses correctly the color of his hat.

A natural generalization of this problem is a game in which every sage can see
only some part of the others. Formally, let the sages be located in vertices
of some graph (``visibility graph''), the sage $i$ can see a color of the hat of
the sage $j$ if and only if there is an edge $(i, j)$ in the graph. This generalization was
introduced in~\cite{butler2009hat} and further was studied in a number of
papers~\cite{Gadouleau2015, Gadouleau2018, cycle_hats}. For example, the
connection of this Hats game with dynamical systems and coding theory was
analyzed in~\cite{Gadouleau2018}. M.\,Farnik define $\hgn(G)$ in his PhD thesis~\cite{Farnik2015} as the maximal number of hat colors, for which the sages can guarantee 
a win. He got some estimations of $\hgn(G)$ in terms of maximum
degree of the graph and graph chromatic number. In~\cite{Alon2018} N.\,Alon et al.\ studied
$\hgn(G)$ for some classes of graphs using mostly probabilistic methods.
The connection between $\hgn(G)$ and other graph parameters was considered by
Bosek et al.~\cite{bosek19_hat_chrom_number_graph}.

W.\,Szczechla~\cite{cycle_hats} got complicated result that in case of three
colors. The sages can win on cycles with $n$ vertices if and only if $n$ is divisible by $3$ or $n
= 4$. Complete list of graphs on which sages win in case of three colors can be found
in~\cite{Kokhas2018}.

In addition to the above, a lot of other variants of the Hats game were
considered. For example, M.\,Krzywkowski's described in~\cite{krzywkowski2010variations}  36 variants of the game rules and
most of them are  probabilistic.  Description of important results and
applications of this game can be found in the same paper.

In paper~\cite{Kokhas2018} authors explain how to reduce the problem of finding
winning strategies for sages on graphs (generally speaking, this problem is very cumbersome)
to SAT (the Boolean satisfiability problem). This makes it possible to study winning strateies by computer for small quite efficiently.

In the present paper we consider a modification of the classical deterministic game on
a graph, in which the sages have different number of possible hat colors.
This modification is not only of its own interest but allows one to find more simple
strategies in the classical game, where the number of colors is constant.

This text is combined from papers \cite{KL19} and \cite{KLR19}.

An application of the technique developed in the present paper is given in~\cite{KL20}, where we  build a planar graph with hat guessing number at least 14.

We introduce the following notation.

$\bullet$ $G = \langle {V, E} \rangle$ is a visibility graph, i.\,e.\ graph, at the
vertices of which the sages are located. We often identify the sages and the
vertices of $G$.

$\bullet$ $h\colon V\to \NN$ is a hat function, or ``hatness'' for short,
$h(v)$ is a number of possible colors for the hat of sage $v$. For sage $A\in V$ let \emph{hatness of sage
A} be the value of $h(A)$. We assume that the list of colors using in this game is
known in advance,  and the color of the hat of $A$ is one of the first $h(A)$ colors in this list.
We often identify the set of
possible hat colors of sage $A$ with set of residues modulo $h(A)$.

\begin{definition}
  The \hats game is the pair $\HG = \langle {G, h} \rangle$, where $G$ is
  a visibility graph, and $h$ is a hat function. The sages are located at
  the vertices of visibility graph $G$ and participate in a \emph{test}. Before the test the sages should determine a public deterministic strategy. During the
  test, every sage $v$ gets a hat of one of $h(v)$ colors. The sages try to guess
  color of their own hats according the chosen strategy and if for each hats arrangement at least one of them guesses correctly, we say
  that the sages \emph{win} or the game is winning. We call the graph in this case
  also winning, keeping in mind that this property depends also on the
  hat function. The game in which the sages have no winning strategy is said to be
  \emph{losing}.

  A game $\HG_1=\langle {G_1, h_1} \rangle$ is \emph{a subgame} of the game $\HG =
  \langle {G, h} \rangle$, if $G_1$ is a subgraph of the graph $G$ and
  $h_1=h\Big\vert_{V(G_1)}$.

  When the adversary puts hats on the heads of all sages, i.\,e.\ assigns a possible
  hat color to every sage, we obtain \emph{hats arrangement}. Formally, every hats
  arrangement is a function $\varphi\colon V(G)\to \ZZ$, where
  $0\leq\varphi(v)\leq h(v)-1$ for all $v\in V(G)$.
\end{definition}

We use standard notations of graph theory:
$C_n$ is an $n$-vertex cycle graph, $P_n$ is an $n$-vertex path,
$P_n(AB)$ is an $n$-vertex path with ends $A$ and $B$, $K_n$ is a
complete graph with $n$ vertices, $N(v)$ or $N_G(v)$ is a set of neighbors of
vertex $v$ in graph $G$.

Denote by $G^A$ the graph, in which one of vertices is $A$. This notation
is used emphasize that graphs under consideration share
common vertex $A$.

By $\langle {G, k} \rangle$ we denote the game on graph $G$ with constant hat function that is equal to $k$. For example, in these terms, the classical game described in the first paragraph is $\langle {K_n, n} \rangle$.

A \emph{strategy} of the sage in vertex $A$ is a function $f_A$ that puts into correspondence to each hats arrangement on $N(A)$  possible color of sage $A$'s hat (i.e. an integer from 0 to $h(A)-1$).  \emph{Collective strategy of the sages} is just the set $\{f_A\mid A\in V(G)\}$.

\medbreak

In the second section we consider the \hats game on complete and ``almost complete'' graphs. The main result here is theorem~\ref{thm:clique-win}.

In the third section we develop ``theory of constructors'', which is a set of theorems that
allow one to construct new winning graphs from existing ones.

In the fourth section we develop new elegant approach to the \hats
game. We describe new game ``Rook check'' that is, in fact, equivalent to the \hats game on a
$4$-cycle. It expands the arsenal of combinatorial tools for constructing
strategies. We present a complete research of game ``Rook check''
and discuss some of its variations.

In the fifth section we analyze the \hats game on cycles with
arbitrary hat functions.

\section{\hats game on complete and almost complete graphs}

\subsection{Game on complete graphs}

In this section we describe the game on a complete graph with vertices $A_1$,
$A_2$, \dots, $A_n$ and arbitrary hat function $h$. Let $a_i=h(A_i)$. The following theorem completely solves
the problem ``for which hat functions on a complete graph the sages win?''

\begin{theorem}\label{thm:clique-win}
  Let hatnesses of $n$ sages located in vertices of complete graph, be $a_1$,
  $a_2$, \dots, $a_n$. Then the sages win if and only iff
  \begin{equation}
    \frac1{a_1}+\frac1{a_2}+\ldots+\frac1{a_n}\geq 1.
    \label{eq:clique-win}
  \end{equation}
\end{theorem}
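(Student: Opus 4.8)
The plan is to handle the two directions separately, with the "only if" (necessity) direction being a counting argument and the "if" (sufficiency) direction being an explicit strategy construction.

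For necessity, suppose $\sum_i 1/a_i < 1$. I would count, for a fixed collective strategy, the number of hats arrangements on which sage $A_i$ guesses correctly. Since $A_i$ sees everyone else, his guess $f_{A_i}$ is a function of the colors on $V \setminus \{A_i\}$; for each of the $\prod_{j\ne i} a_j$ choices of the others' colors, exactly one value of $\varphi(A_i)$ makes him correct. Hence $A_i$ is correct on exactly $\prod_{j\ne i} a_j = \bigl(\prod_j a_j\bigr)/a_i$ arrangements. Summing over $i$, the total number of (arrangement, correct sage) incidences is $\bigl(\prod_j a_j\bigr)\sum_i 1/a_i < \prod_j a_j$, which is the total number of arrangements. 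By pigeonhole, some arrangement has no correct sage, so the sages lose. This direction is routine.

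For sufficiency, assume $\sum_i 1/a_i \ge 1$. The key idea is a "fractional partition of a residue interval" strategy generalizing the classical modular trick. Identify sage $A_i$'s color with a residue in $\ZZ/a_i\ZZ$ and pick an integer $M$ (for instance $M = \operatorname{lcm}(a_1,\dots,a_n)$, or simply think in terms of a common scaling). Choose disjoint intervals $I_1, \dots, I_n$ partitioning (a subset of) $\{0, 1, \dots, M-1\}$ with $|I_i| = M/a_i$; this is possible precisely because $\sum_i M/a_i \le M$. Each sage $A_i$ will be "responsible" for the interval $I_i$: he computes $S = \sum_{j} \varphi(A_j)$ using the fact that he knows $\varphi(A_j)$ for $j \ne i$, so he knows $S$ modulo... — the obstruction is that the sages have different moduli, so a single global sum does not live in one ring. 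I would instead set it up as follows: let $t$ range over $\{0,\dots,M-1\}$ and define a single "target" value $T(\varphi)$ that all sages can compute the residue of modulo their own $a_i$ from partial information; the natural choice is $T = \sum_j c_j \varphi(A_j) \bmod M$ for suitable weights, but each sage is missing one term. The clean fix: sage $A_i$ guesses the unique color $x \in \ZZ/a_i\ZZ$ such that, if $\varphi(A_i) = x$, the resulting sum $\sum_j \varphi(A_j)$ lands (after lifting to $\{0,\dots,M-1\}$ via a fixed convention) in the block $I_i$. Since as $x$ ranges over its $a_i$ values the sum takes $a_i$ distinct values spaced $M/a_i$ apart modulo $M$, exactly one of them lies in $I_i$ when $|I_i| = M/a_i$; this defines $f_{A_i}$ and it depends only on $\varphi|_{N(A_i)}$. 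For any true arrangement, the actual sum lies in exactly one block $I_i$, and that sage guesses correctly.

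The main obstacle is making the sufficiency construction fully rigorous when the $a_i$ are incommensurable: one must be careful that "the $a_i$ values of the sum are equally spaced modulo $M$" really holds — it does when $a_i \mid M$, which is why taking $M$ a common multiple is the right move — and that the blocks $I_i$ can be chosen of the exact sizes $M/a_i$, which needs $\sum_i M/a_i \le M$, i.e.\ exactly hypothesis~\eqref{eq:clique-win}. A secondary subtlety is the boundary case $\sum 1/a_i = 1$, where the blocks tile $\{0,\dots,M-1\}$ exactly with no slack, so every arrangement's sum lands in precisely one block and the argument is tight. I would also double-check that when some $a_i = 1$ the corresponding sage trivially always guesses correctly, consistent with $1/a_i = 1$ already forcing the inequality; this serves as a sanity check on the construction.
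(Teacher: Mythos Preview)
Your necessity argument is correct and essentially identical to the paper's.

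For sufficiency you are aiming at the same explicit construction the paper gives as its second proof, but there are two concrete slips that, as written, break the argument.

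First, the inequality direction and the ``disjoint'' requirement are backwards. The hypothesis $\sum_i 1/a_i \ge 1$ gives $\sum_i M/a_i \ge M$, not $\le M$, so you \emph{cannot} in general choose disjoint intervals of sizes $M/a_i$ inside $\{0,\dots,M-1\}$. What you need (and what the condition actually guarantees) is that intervals of those sizes can be chosen to \emph{cover} $\{0,\dots,M-1\}$; overlaps are harmless, since it is enough that for every possible value of the global sum at least one sage's block contains it. Your closing sentence ``the actual sum lies in exactly one block $I_i$'' should read ``in at least one block''.

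Second, the spacing claim is false for the naive sum. If you set $S=\sum_j \varphi(A_j)$ with $\varphi(A_j)\in\{0,\dots,a_j-1\}$, then as $\varphi(A_i)$ ranges over its $a_i$ values the sum moves in steps of $1$, not $M/a_i$; so there is no reason a length-$M/a_i$ block contains exactly one attainable value. The fix is exactly the weighted sum you briefly considered and then dropped: take $M=\operatorname{lcm}(a_1,\dots,a_n)$, set $d_j=M/a_j$, identify the colors of sage $A_j$ with the multiples $d_j,2d_j,\dots,a_jd_j$ of $d_j$ modulo $M$, and let $S=\sum_j x_j d_j \pmod{M}$. Now as $x_i$ varies the values of $S$ are spaced $d_i$ apart, so any block of $d_i$ consecutive residues contains exactly one attainable value, and since $\sum_i d_i\ge M$ consecutive blocks of lengths $d_1,\dots,d_n$ cover all residues. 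With these two corrections your argument becomes the paper's Proof~2 verbatim.

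The paper also gives an independent first proof of sufficiency via Hall's marriage theorem (hat arrangements as ``boys'', the sets of arrangements consistent with what sage $i$ sees as ``girls''), which is non-constructive but avoids arithmetic entirely; you may find it a useful alternative.
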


\begin{proof}
  The necessity of condition~\eqref{eq:clique-win} is obvious: for each strategy of the sages, the $i$-th sage
  guesses correctly exactly on $\frac 1{a_i}$ of all hat arrangements, so if the sum is
  less than 1, there exists an arrangement for which no one guesses correctly.

  We give two proofs of the sufficiency of condition~\eqref{eq:clique-win}. The
  first one uses Hall's marriage theorem and the second one presents the
  strategy, that generalizes the arithmetic strategy for the classical game.

  \medskip
  \emph{Proof 1.}
  Let us prove that if the sum is greater than or equal to 1, then the sages win. The
  existence of a winning strategy is proved by using Hall's marriage theorem.

  For each sage $i$, we split the set of all hat arrangements into subsets of $a_i$
  elements each in the following way. Delete the color $c_i$ of the $i$-th sage from
 each hat arrangement. For the remaining set $c=(c_1, \dots, c_{i-1},
  \bar{c_i}, c_{i+1}, \dots, c_n)$ (symbol ``bar''  means that this color is
  omitted) put
  $$
  A_c^i=\{(c_1,\dots, c_{i-1}, \ell, c_{i+1}, \dots, c_n ) \mid 0\leq \ell\leq
  a_i-1
  \}.
  $$
  Set $A_c^i$ consists of  ``potentially possible'' hat arrangements from the
  point of view of the $i$-th sage:  he sees that colors of the other sages form
  a set $c$ and mentally appends to it all possible colors $\ell$ of his own hat.
  Bearing in mind the application of Hall's theorem, we associate the sets $A_c^i$ with ``girls'' and hat arrangements with ``boys''.
  The boy $s$ and girl $A_c^i$ \emph{know each other} if the hat arrangement $s$ is an element of $A_c^i$. Every
  boy knows $n$ girls, and for each $i$ every man knows exactly one girl of
  type $A_c^i$. Every girl~$A_c^i$ knows exactly $a_i$ boys.

 Let us prove that there exists a matching sending each boy to a girl. It 
 suffices to check the theorem condition, i.e., that every $m$ boys know together at
  least $m$ girls. Consider an arbitrary set of $m$ boys. Since for each $i$,
 the girl $A_c^i$ knows exactly $a_i$ boys, any $m$ boys know in
  total at least $\frac m{a_i}$ girls of type $A_c^i$ for each $i$. 
  Summing over $i$, we find that the
  total number of girls familiar with these $m$ boys is at least
  $$
  \frac m{a_1}+\frac m{a_2}+\ldots+\frac m{a_n}\geq m.
  $$
  This shows that the condition f the Hall's theorem is satisfied.

  Thus, there exists a matching that assigns to each hat
  arrangement a set of type $A_c^i$. Note that if the equality
  $\frac1{a_1}+\frac1{a_2}+\ldots+\frac1{a_n}= 1$ holds, then this matching selects, in fact, one element in each set $A_c^i$. Otherwise, if 
  $\frac1{a_1}+\frac1{a_2}+\ldots+\frac1{a_n}> 1$, then ``there are
  lonely girls'', i.\,e.\ no elements are selected in some sets $A_c^i$.

  The constructed matching allows to define a strategy for the sages. Let the $j$-th sage
  act by the rule: looking at hats of the other sages, i.\,e. at the set of colors
  $$
  c =(c_1,\dots, c_{j-1}, c_{j+1}, \dots, c_n),
  $$
  he reconstructs  the set $A_c^j$ which, in fact, consists of all possible ways
  to supplement the set $c$ to the hat arrangement on the whole graph. The sage
  should say the color marked in set $A_c^j$ by our matching (if there is
  no marked element, he says color arbitrarily). Since each hat arrangement is
  mapped by our matching to the selected element of one of sets~$A_c^i$,  
  the $i$-th sage guesses correctly his own color for this hat arrangement.

  \medskip
  \emph{Proof 2.}
  Let $N=\text{LCM}(a_1, a_2, \dots, a_n)$ (the least common multiple). For $k$ from
  $1$ to $n$ set $d_k=N/a_k$. We identify the set of all possible hat colors of
  the $k$-th sage and the set  of integers $\{d_k, 2d_k, \dots, a_kd_k\}$
  modulo~$N$. Now we describe the winning strategy of the sages. Let the $k$-th sage
  get hat of color $x_kd_k$, where $x_k\in\{1,2,\dots,a_k\}$ ($1\leq k\leq n$).
  Let
  $$
  S=x_1d_1+x_2d_2+\ldots +x_nd_n\pmod{N}.
  $$
  Each sage, seeing those around him, knows all the summands of this sum, except for his own. Making assumption about the value of the sum, he can calculate the color
  of his own hat. Let the first sage check the hypothesis $S\in\{1, 2, \dots, d_1\}$;
  the second sage check the hypothesis $S\in\{d_1+1, d_1+2, \dots, d_1+d_2\}$ and so
  on, the $n$-th sage check the hypothesis $S\in\{d_1+d_2+\ldots+d_{n-1}+1, \dots,
  d_1+d_2+\ldots+d_{n-1}+d_n\}$. The hypothesis of the \hbox{$k$-th} sage involves
  $d_k$ consecutive integers, among which exactly one is divisible by~$d_k$.
  This integer determines the color of hat that the $k$-th sage should say.

  We note that the inequality $d_1+d_2+\ldots+d_{n-1}+d_n\geq N$ holds by the definition of
  numbers $d_k$ and inequality \eqref{eq:clique-win}. This
  means that in the above strategy, the hypotheses of the sages cover all remainders modulo $N$.
  So the sages win.
\end{proof}

\begin{definition}
  The strategy of the sages is said to be \emph{precise} if for each hat
  arrangement exactly one of sage guesses is correct.
\end{definition}

\begin{corollary}
  Precise strategies exist if and only if the visibility graph is complete and
  the hat function satisfies equality
  \begin{equation}\label{eqn:sum=1}
    \frac1{a_1}+\frac1{a_2}+\ldots+\frac1{a_n}= 1.
  \end{equation}
\end{corollary}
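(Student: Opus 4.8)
The plan is to prove the two implications separately, leaning on a single elementary observation — a sage's guess is a function of the hats he sees, and in particular does not involve his own hat — together with the remark, already made in Proof~1 of Theorem~\ref{thm:clique-win}, that under equality~\eqref{eqn:sum=1} the Hall matching is a bijection.

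For the ``only if'' direction, suppose the game $\HG=\langle G,h\rangle$ admits a precise strategy. First I would count correct guesses. Fix a sage $v$: his strategy $f_v$ depends only on the hats on $N(v)$, and $v\notin N(v)$, so if we partition all hat arrangements into the $\prod_{u\ne v}h(u)$ classes determined by the restriction to $V\setminus\{v\}$, the guess of $v$ is constant on each class; hence $v$ is correct on exactly one arrangement per class, i.e.\ on $\prod_{u\ne v}h(u)=\tfrac1{h(v)}\prod_u h(u)$ arrangements in total. Summing over all sages and using that a precise strategy yields exactly one correct guess on each arrangement gives $\bigl(\sum_v\tfrac1{h(v)}\bigr)\prod_u h(u)=\prod_u h(u)$, which is~\eqref{eqn:sum=1}. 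To see that $G$ must be complete, assume $A$ and $B$ are non-adjacent, fix arbitrary hats on all other vertices, and observe that neither $A$ nor $B$ sees the other (nor himself), so their guesses are then predetermined colors $\alpha$ and $\beta$; the arrangement extending the fixed hats by $\varphi(A)=\alpha$, $\varphi(B)=\beta$ makes \emph{both} $A$ and $B$ correct, contradicting precision.

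For the ``if'' direction, let $G=K_n$ and assume~\eqref{eqn:sum=1}. I would invoke Proof~1 of Theorem~\ref{thm:clique-win}: the Hall matching sends each hat arrangement (boy) to some set $A^i_c$ (girl), and under~\eqref{eqn:sum=1} the numbers of boys and girls coincide, so it is a bijection; thus each $A^i_c$ has a unique selected element and the induced strategy ``sage $j$ announces the selected element of $A^j_c$'' is well-defined. Given an arrangement $s$, it is matched to exactly one girl $A^i_c$, so sage $i$ is correct; and for $k\ne i$, if $c'$ is what sage $k$ sees, then $s\in A^k_{c'}$ but the selected element $s'$ of $A^k_{c'}$ satisfies $s'\ne s$ (else $s$ would be matched to two distinct girls), while $s$ and $s'$ agree off coordinate $k$, so $s'_k\ne s_k$ and sage $k$ errs. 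Hence exactly one sage is correct.

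I do not anticipate a genuine obstacle; the only place needing a little care is this last bijection step, where one must verify that a non-winning sage is actually \emph{wrong} rather than merely that some sage wins — precisely the comparison of $s$ with $s'$ above. Everything else reduces to the counting identity and the principle that a sage cannot exploit information he does not possess.
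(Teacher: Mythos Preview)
Your proof is correct and follows essentially the same route as the paper: the non-adjacent-vertex argument for completeness is identical, and the equivalence with equality~\eqref{eqn:sum=1} is derived from the same ingredients in the proof of Theorem~\ref{thm:clique-win} (the $1/a_i$ fraction count for necessity, and the Hall matching being a bijection for sufficiency). You are simply more explicit than the paper, which merely says the equivalence ``follows from the proof of theorem~\ref{thm:clique-win}''; in particular, your verification that every sage $k\ne i$ actually errs (rather than just that sage $i$ wins) fills in a detail the paper leaves to the reader.
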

\begin{proof}
  Let the sages act according to some strategy. If the graph contains two non-adjacent
  vertices $A$ and $B$, then we put arbitrary hats to all sages except for $A$ and $B$.
  Now the answers of $A$ and $B$ are determined by the strategy. let us give them hats
  for which their guesses are correct. With this hat arrangement, $A$, $B$
  and, possibly, someone else, guess correctly. Therefore, the strategy is not
  precise. The fact that the existing of precise strategy on complete graph is
  equivalent to equality~\eqref{eqn:sum=1} follows from the proof of
  theorem~\ref{thm:clique-win}.
\end{proof}

\subsection{Game on almost complete graphs}

\begin{definition}
  An \emph{almost complete} graph is a complete graph with one edge removed. And
  an \emph{almost clique} is an almost complete subgraph of some graph.
\end{definition}

\begin{corollary}
  Let $G$ be an almost complete graph obtained from a complete graph $K_n$ with
  vertices $A_1$, $A_2$, \dots, $A_n$ by removing the edge $A_{n-1}A_n$. Let
 the $i$-th sage get hat of one of $a_i$ colors.
  If graph $G$ is winning, then
  \begin{equation}\label{ineq:clique-minus-edge}
    \frac1{a_1}+\frac1{a_2}+\ldots+\frac1{a_n}-\frac1{a_{n-1}a_n}\geq 1.
  \end{equation}
\end{corollary}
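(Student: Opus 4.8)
The plan is to fix an arbitrary winning collective strategy $\{f_{A_i} \mid 1 \le i \le n\}$ and, exactly as in the necessity part of Theorem~\ref{thm:clique-win}, count the fraction of hat arrangements on which each sage guesses correctly — but now keeping track of the unavoidable overlap caused by the fact that $A_{n-1}$ and $A_n$ do not see each other. Write $M = a_1 a_2 \cdots a_n$ for the total number of hat arrangements, and for $1 \le i \le n$ let $W_i$ be the set of arrangements $\varphi$ on which sage $A_i$ guesses correctly. Since the strategy is winning, $W_1 \cup W_2 \cup \dots \cup W_n$ is the set of all arrangements, so $|W_1 \cup \dots \cup W_n| = M$.

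First I would evaluate $|W_i|$ for $i \le n-2$. Such a sage is adjacent to all other vertices, so $f_{A_i}$ is a function of $(c_1, \dots, c_{i-1}, c_{i+1}, \dots, c_n)$; for every fixing of those $n-1$ colors there is exactly one value of $c_i$ making $A_i$ correct, hence $|W_i| = M / a_i$. For $A_{n-1}$ the situation is almost the same: it is adjacent to everybody except $A_n$, so $f_{A_{n-1}}$ depends only on $(c_1, \dots, c_{n-2})$; for any fixing of the colors $(c_1,\dots,c_{n-2},c_n)$ there is exactly one color $c_{n-1}$, namely $f_{A_{n-1}}(c_1,\dots,c_{n-2})$, for which $A_{n-1}$ is correct, so again $|W_{n-1}| = M/a_{n-1}$, and symmetrically $|W_n| = M/a_n$.

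The decisive step is to count $|W_{n-1} \cap W_n|$. Here one uses that $A_{n-1}$ and $A_n$ see exactly the same set of vertices $\{A_1,\dots,A_{n-2}\}$, so both $f_{A_{n-1}}$ and $f_{A_n}$ are functions of $(c_1,\dots,c_{n-2})$ alone. Fix the colors $(c_1,\dots,c_{n-2})$ of the common neighbours (there are $a_1 \cdots a_{n-2}$ choices). Then $A_{n-1}$ is correct precisely when $c_{n-1} = f_{A_{n-1}}(c_1,\dots,c_{n-2})$ and $A_n$ is correct precisely when $c_n = f_{A_n}(c_1,\dots,c_{n-2})$, and these two conditions prescribe $c_{n-1}$ and $c_n$ uniquely and independently of each other. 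Hence exactly $a_1 \cdots a_{n-2}$ arrangements lie in $W_{n-1}\cap W_n$, i.e.\ $|W_{n-1}\cap W_n| = M/(a_{n-1}a_n)$. This is the place that requires care, and it is exactly the feature distinguishing the almost complete graph from $K_n$: the overlap of the correct-sets of the two non-adjacent sages is forced to be this large, which produces the correction term.

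Finally I would assemble everything with the elementary estimate
$$ |W_1 \cup \dots \cup W_n| \;\le\; \sum_{i=1}^{n-2}|W_i| + |W_{n-1} \cup W_n| \;=\; \sum_{i=1}^{n}|W_i| - |W_{n-1}\cap W_n| \;=\; \frac{M}{a_1}+\dots+\frac{M}{a_n}-\frac{M}{a_{n-1}a_n}. $$
Since the left-hand side equals $M$, dividing through by $M$ yields precisely inequality~\eqref{ineq:clique-minus-edge}. The main obstacle, as noted, is the exact evaluation of $|W_{n-1}\cap W_n|$; everything else is a direct counting argument combined with the inclusion--exclusion bound for the union of three (families of) sets.
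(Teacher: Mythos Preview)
Your proof is correct and follows essentially the same approach as the paper: both arguments fix the colours of $A_1,\dots,A_{n-2}$, observe that the guesses of $A_{n-1}$ and $A_n$ are then determined, compute that exactly $a_{n-1}+a_n-1$ of the $a_{n-1}a_n$ completions make at least one of them right, and combine this with the $1/a_i$ fractions for the remaining sages. Your write-up is simply a more explicit inclusion--exclusion formulation of the paper's terse counting.
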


\begin{proof}
  The fraction of total number of the arrangements for which $A_{n-1}$ or $A_n$
  guesses correctly, is equal to
  $$
  \frac1{a_{n-1}}+\frac1{a_n} -\frac1{a_{n-1}a_n}.
  $$
  Indeed, let us fix hat colors for the sages $A_1$, \dots, $A_{n-2}$. Then the answers of
  sages $A_{n-1}$ and $A_n$ are determined by the strategy. It is not difficult
  to see that there are exactly $a_{n-1}+a_n-1$ hat arrangements among
  $a_{n-1}a_n$ possible arrangements for $A_{n-1}$ and $A_n$, where
  either $A_{n-1}$ or $A_n$ (maybe both) guesses correctly. As for the other
  sages, each sage $A_k$ guesses correctly on $\frac1{a_k}$ fraction of all
  arrangements. So if inequality~\eqref{ineq:clique-minus-edge} does not hold,
  there exists a hat arrangement, where nobody guesses correctly.
\end{proof}

We call the game on almost complete graph \emph{almost precise}, if
inequality~\eqref{ineq:clique-minus-edge} turns into equality and the sages win.
In an almost precise game two sages ($A_{n-1}$ and $A_n$) guess their colors
correctly on the $\frac{1}{a_{n-1}a_n}$ fraction of all arrangements, and for all
other arrangements only one of the sages guesses.

We give a necessary condition for the game to be an almost precise game.

\begin{theorem}\label{thm:almost_precise_game}
  Let  $G$ be a complete graph with $n$ vertices $A_1$, $A_2$, \dots, $A_n$, in
  which edge~$A_{n-1}A_n$ has been removed, and $h(A_i)=a_i$, $i=1$, \dots, $n$.
  Let the game be almost precise, i.\,e.\ the following equality holds
  \begin{equation}\label{eq:clique-minus-edge}
    \frac1{a_1}+\frac1{a_2}+\ldots+\frac1{a_n} -\frac1{a_{n-1}a_n} =  1.
  \end{equation}
  Then $a_1a_2\cdots a_{n-2}$ is divisible by $a_{n-1}a_n$.
\end{theorem}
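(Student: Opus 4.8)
The plan is a double-counting argument in which the borderline equality~\eqref{eq:clique-minus-edge}, together with the winning hypothesis, forces the ``correct-guess'' events to tile the arrangement space exactly; this rigidity then pins down certain fiber sizes whose integrality is precisely the asserted divisibility. Throughout, fix a winning collective strategy; write $f_i=f_{A_i}$ for the strategy of $A_i$, let $\Omega=\ZZ_{a_1}\times\cdots\times\ZZ_{a_n}$ be the set of hat arrangements (so $|\Omega|=Ma_{n-1}a_n$ with $M:=a_1a_2\cdots a_{n-2}$), and let $W_i\subseteq\Omega$ be the set of arrangements on which $A_i$ guesses correctly, so $|W_i|=|\Omega|/a_i$. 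Since $A_{n-1}$ and $A_n$ are not adjacent, $f_{n-1}$ and $f_n$ are functions of $\bar c:=(c_1,\dots,c_{n-2})$ only; put $p(\bar c)=f_{n-1}(\bar c)$, $q(\bar c)=f_n(\bar c)$. Then $W_{n-1}\cap W_n=\{c\in\Omega:c_{n-1}=p(\bar c),\ c_n=q(\bar c)\}$ has exactly $M$ elements, so $|W_{n-1}\cup W_n|=Ma_n+Ma_{n-1}-M=M(a_{n-1}+a_n-1)$.

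The first step is to show that $W_1,\dots,W_{n-2},\,W_{n-1}\cup W_n$ is a partition of $\Omega$. Winning means $W_1\cup\cdots\cup W_{n-2}\cup(W_{n-1}\cup W_n)=\Omega$; rewriting~\eqref{eq:clique-minus-edge} as $\sum_{i=1}^{n-2}\frac1{a_i}=\frac{(a_{n-1}-1)(a_n-1)}{a_{n-1}a_n}$ and summing cardinalities gives
$$\sum_{i=1}^{n-2}|W_i|+|W_{n-1}\cup W_n|=M(a_{n-1}-1)(a_n-1)+M(a_{n-1}+a_n-1)=Ma_{n-1}a_n=|\Omega|.$$
Subsets of a finite set that cover it and whose sizes add up to its size are pairwise disjoint, which yields the partition.

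Next I would extract fiber sizes by slicing this partition. For $x_0\in\ZZ_{a_{n-1}}$, intersecting the partition with the line $L=\{c\in\Omega:c_{n-1}=x_0\}$, $|L|=Ma_n$, one computes $|W_i\cap L|=Ma_n/a_i$ for $i\le n-2$ (freezing $c_{n-1}$ still leaves $f_i$ determining $c_i$), $|W_{n-1}\cap L|=P_{x_0}a_n$, $|W_n\cap L|=M$, $|W_{n-1}\cap W_n\cap L|=P_{x_0}$, where $P_{x_0}:=|\{\bar c:p(\bar c)=x_0\}|$. Equating $|L|$ with the sum of the sizes of its pieces, substituting~\eqref{eq:clique-minus-edge}, and dividing by $a_n-1$ (nonzero, since~\eqref{eq:clique-minus-edge} is incompatible with any $a_i=1$ once $n\ge3$) collapses to $P_{x_0}=M/a_{n-1}$, independent of $x_0$; symmetrically $Q_{y_0}:=|\{\bar c:q(\bar c)=y_0\}|=M/a_n$. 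Finally, intersecting the partition with the two-coordinate slice $S=\{c\in\Omega:c_{n-1}=x_0,\ c_n=y_0\}$, $|S|=M$, gives $|W_i\cap S|=M/a_i$ for $i\le n-2$, $|W_{n-1}\cap S|=M/a_{n-1}$, $|W_n\cap S|=M/a_n$, and $|W_{n-1}\cap W_n\cap S|=|\{\bar c:p(\bar c)=x_0,\ q(\bar c)=y_0\}|=:n_{x_0,y_0}$. As these pieces partition $S$,
$$M=\sum_{i=1}^{n-2}\frac{M}{a_i}+\frac{M}{a_{n-1}}+\frac{M}{a_n}-n_{x_0,y_0},$$
so $n_{x_0,y_0}=M\left(\sum_{i=1}^{n}\frac1{a_i}-1\right)=M/(a_{n-1}a_n)$ by~\eqref{eq:clique-minus-edge}. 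Since $n_{x_0,y_0}$ is a nonnegative integer, $a_{n-1}a_n\mid M=a_1a_2\cdots a_{n-2}$.

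The main obstacle is the first step: it is the extremality in~\eqref{eq:clique-minus-edge} that upgrades ``the winning events cover $\Omega$'' to ``they tile $\Omega$'', and without this disjointness the subsequent slice counts would only yield inequalities rather than the exact values $P_{x_0}=M/a_{n-1}$ and $n_{x_0,y_0}=M/(a_{n-1}a_n)$. The remaining work is routine bookkeeping; the one recurring point to check is the identity $|W_i\cap(\text{slice})|=(\text{size of slice})/a_i$ for $i\le n-2$, which holds because $A_i$ sees every coordinate that is frozen in the slice and its guess still pins down its own color.
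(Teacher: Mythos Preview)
Your proof is correct. The paper's argument reaches the same integrality $a_{n-1}a_n\mid M$ via the same quantities (your $P_{x_0}$ and $n_{x_0,y_0}$ are precisely the paper's $|L_{x_0}|$ and $|L_{x_0}\cap R_{y_0}|$), but the route is genuinely different. The paper never forms the global partition of $\Omega$; instead it works directly on the space $X$ of the first $n-2$ coordinates, picks an \emph{extremal} fiber $L_k$ of minimal size, then a minimal $R_m\setminus L_k$, and chains the inequalities $|L_k|\le M/a_{n-1}$, $|R_m\setminus L_k|\le (M-|L_k|)/a_n$ into a bound on $|L_k\cup R_m|$. That bound, combined with the requirement that the remaining sages cover $X\setminus(L_k\cup R_m)$, collapses to equality under~\eqref{eq:clique-minus-edge}, forcing $|L_i|=M/a_{n-1}$ for all $i$ and then $|L_k\cap R_m|=M/(a_{n-1}a_n)$.

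Your approach trades this min/pigeonhole step for the observation that, under~\eqref{eq:clique-minus-edge}, the winning events $W_1,\dots,W_{n-2},W_{n-1}\cup W_n$ exactly partition $\Omega$; after that, the one- and two-coordinate slices give the fiber sizes by straight arithmetic rather than via collapsing inequalities. This is cleaner and more systematic, and it makes transparent \emph{why} the equality is essential (without it the cover-versus-sum gap kills the partition and the slice counts become only inequalities). The paper's extremal argument, on the other hand, is a bit more self-contained and would still yield nontrivial inequalities on the $|L_i|$ even when~\eqref{eq:clique-minus-edge} is merely an inequality.
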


\begin{proof}
  The summands $\frac1{a_1}$, \dots, $\frac1{a_{n-2}}$ have clear probabilistic
  interpretation: $\frac1{a_i}$ is the fraction of the hat arrangements for which sage
  $A_i$ guesses correctly.

  Let $X$ be the set of hat arrangements for the first $n-2$~sages, i.\,e., in
  other words, $X$ is a collection of sets of $n-2$~colors, where the first
  color is a possible hat color of sage~$A_1$, the second color is a possible hat
  color of sage~$A_2$ and so on, the $(n-2)$-th color is a possible hat color of
  sage~$A_{n-2}$. Let $\alpha = a_1a_2\cdots a_{n-2}$, then $|X|=\alpha $.
  We split set $X$ onto subsets~$L_i$ ($i=1$, $2$, \dots, $a_{n-1}$) such that if
  sage~$A_{n-1}$ sees a set of colors from~$L_i$ on his neighbors, then he
  says color~$i$. The sets~$R_j$ ($j=1$, 2, \dots, $a_n$) for sage~$A_n$ are defined
  similarly. Let $L_k$ be the set~$L_i$ of minimum cardinality, $|L_k|=M\leq
  \frac{\alpha}{a_{n-1}}$. Now we consider the sets $R_j\setminus L_k$ ($j=1$, 2, \dots,
  $a_n$). These sets contain $\alpha-M$ elements in total and hence if $R_m\setminus
  L_k$ is the set of minimum cardinality, then $|R_m\setminus L_k|\leq
  \frac{\alpha-M}{a_n}$. Therefore,
  \begin{align}
    |L_k \cup R_m|&=|L_k|+|R_m\setminus L_k|\leq M+\frac{\alpha-M}{a_n}
    =
      \frac{\alpha}{a_n}  + M\biggl(1- \frac{1}{a_n}\biggr)
      \leq
      \frac{\alpha}{a_n}  + \frac{\alpha}{a_{n-1}}\biggl(1- \frac{1}{a_n}\biggr)
      \label{eqn:2nerav}
      =\\&=
    \alpha\biggl(\frac{1}{a_{n-1}}+ \frac{1}{a_n}-\frac{1}{a_{n-1}a_n}\biggr)
    =
    \alpha\biggl(1-\sum_{i=1}^{n-2}\frac{1}{a_{i}}\biggr)
    =
    \alpha -\frac{\alpha}{a_1}-\ldots -\frac{\alpha}{a_{n-2}}.
    \notag
  \end{align}
  Thus, if sage~$A_{n-1}$ has the hat of color~$k$, and sage $A_n$ has the hat of
  color~$m$,
  and the remaining sages have colors of hats arrangement from the set
  $X\setminus(L_k \cup R_m)$, then one of the sages $A_1$, $A_2$, \dots $A_{n-2}$
  guesses correctly. The fraction of hat arrangements, for which this 
  happens, is greater than or equal to $\rho =\frac1{a_1}+\frac1{a_2}+\ldots +\frac1{a_{n-2}}$.
  But $\rho$ bounds from above the number of
  arrangements on which the sages $A_1$, $A_2$, \dots $A_{n-2}$ win. Therefore both
  inequalities \eqref{eqn:2nerav} are equalities. Then
  $|L_k|=\frac{\alpha}{a_{n-1}}$ (moreover $|L_i|=\frac{\alpha}{a_{n-1}}$ for
  all $i$),  and $|R_ m\setminus
  L_k|=\frac{\alpha}{a_{n}}-\frac{\alpha}{a_{n-1}a_{n}}$. Analogously
  $|R_j|=\frac{\alpha}{a_{n}}$. Thus,  $|R_m \cap
  L_k|=\frac{\alpha}{a_{n-1}a_{n}}$, and $\alpha$ is divisible by
  ${a_{n-1}a_{n}}$.
\end{proof}

\begin{corollary}
  Inequality~\eqref{ineq:clique-minus-edge} is not sufficient for the sages to win
  on almost complete graphs. For $n=4$ almost complete graph with hat function
  $a_1=3$, $a_2=6$, $a_3=3$, $a_4=4$ (edge $A_3A_4$ is removed) is losing, though
  it satisfies inequality \eqref{ineq:clique-minus-edge} and
  equality~\eqref{eq:clique-minus-edge}.
\end{corollary}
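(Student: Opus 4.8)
The plan is to obtain the corollary directly from Theorem~\ref{thm:almost_precise_game} via its contrapositive, so that no genuinely new argument is needed beyond one arithmetic check and careful bookkeeping of the indices. First I would verify that the proposed hat function really does satisfy the hypotheses named in the statement. With $a_1=3$, $a_2=6$, $a_3=3$, $a_4=4$ and the edge $A_3A_4$ removed one computes
$$\frac1{a_1}+\frac1{a_2}+\frac1{a_3}+\frac1{a_4}-\frac1{a_3a_4}=\frac13+\frac16+\frac13+\frac14-\frac1{12}=1,$$
so both \eqref{ineq:clique-minus-edge} and the equality \eqref{eq:clique-minus-edge} hold for this almost complete graph on $A_1,\dots,A_4$.

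Next I would match the data to the setup of Theorem~\ref{thm:almost_precise_game}: here $n=4$, the deleted edge is $A_{n-1}A_n=A_3A_4$, hence $a_{n-1}a_n=a_3a_4=12$, while $a_1a_2\cdots a_{n-2}=a_1a_2=18$. Since $12\nmid 18$, the divisibility conclusion of Theorem~\ref{thm:almost_precise_game} fails for this game.

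Then I would argue by contradiction. Suppose the sages could win on this graph. Because the equality \eqref{eq:clique-minus-edge} already holds, the game would then be \emph{almost precise} in the sense defined just before Theorem~\ref{thm:almost_precise_game}. That theorem would therefore force $a_1a_2$ to be divisible by $a_3a_4$, i.e. $12\mid 18$, which is false. Hence the sages cannot win, so this particular almost complete graph equipped with this hat function is losing. Since it nonetheless satisfies \eqref{ineq:clique-minus-edge} (indeed with equality), this single example establishes that \eqref{ineq:clique-minus-edge} is not sufficient for a win on almost complete graphs.

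The only point that requires any care — and the closest thing to an obstacle — is the index bookkeeping: one must ensure the removed edge is the one joining the last two vertices $A_{n-1},A_n$, so that $a_{n-1}a_n$ and the product $a_1\cdots a_{n-2}$ are precisely the quantities appearing in Theorem~\ref{thm:almost_precise_game}. With the labelling $a_1=3,a_2=6$ on the two vertices incident to all edges and $a_3=3,a_4=4$ on the two non-adjacent vertices, everything lines up and the corollary is immediate.
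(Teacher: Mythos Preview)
Your argument is correct and follows exactly the same route as the paper: verify that equality~\eqref{eq:clique-minus-edge} holds, observe that $a_1a_2=18$ is not divisible by $a_3a_4=12$, and invoke the contrapositive of Theorem~\ref{thm:almost_precise_game}. The paper's proof is the one-line version of what you wrote.
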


It immediately follows from theorem~\ref{thm:almost_precise_game} because
$a_1a_2$ is not divisible by $a_3a_4$ here.

Now consider two cases when the conditions of theorem~\ref{thm:almost_precise_game} are not sufficient.

\begin{lemma}
  Let $G$ be an almost complete graph on vertices $A_1$, \dots, $A_n$, $n\geq 4$,
  and without the edge $A_{n-1}A_n$. Let hat function $h$ satisfy the
  equality~\eqref{eq:clique-minus-edge}. Then
  \begin{enumerate}[1), nosep]
  \item if $h(A_1)=2$, then the sages lose.
  \item if $h(A_1)=3$, $h(A_n)=2$, then the sages lose.
  \end{enumerate}
\end{lemma}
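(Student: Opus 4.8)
The plan is to argue by contradiction in both parts: assume the sages win. Since \eqref{eq:clique-minus-edge} holds with equality, the game is almost precise, and the first step is to extract the \emph{partition property}: the events ``$A_i$ guesses correctly'' for $i=1,\dots,n-2$, together with the event ``$A_{n-1}$ or $A_n$ guesses correctly'', form a partition of the set of all hat arrangements. Indeed their union is everything because the sages win, and the sum of their densities is $\frac1{a_1}+\dots+\frac1{a_{n-2}}+\bigl(\frac1{a_{n-1}}+\frac1{a_n}-\frac1{a_{n-1}a_n}\bigr)=1$ by \eqref{eq:clique-minus-edge} (the density of the last event was computed in the proof of the corollary giving \eqref{ineq:clique-minus-edge}); a family of sets whose total measure equals the measure of their union is disjoint. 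The consequence I will use is: \emph{whenever $A_1$ guesses correctly, both $A_{n-1}$ and $A_n$ guess incorrectly}. I also record that $A_{n-1}$ and $A_n$ each see exactly the sages $A_1,\dots,A_{n-2}$ (they are non-adjacent to each other), so each of their guesses is a function of those $n-2$ colors alone, and that $A_1$, being adjacent to all of $A_2,\dots,A_n$, makes a guess independent of its own hat.

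For part~1 ($h(A_1)=2$), fix arbitrary colors $x_2,\dots,x_{n-2}$ of $A_2,\dots,A_{n-2}$ (there are none when $n=4$). Let $c$ be the guess of $A_{n-1}$ when $A_1$ wears color $0$ and $A_j$ wears $x_j$, and let $d$ be the guess of $A_n$ when $A_1$ wears color $1$ and $A_j$ wears $x_j$. Compare the arrangement $\sigma_0=(A_1{:}0,\ x_\bullet,\ A_{n-1}{:}c,\ A_n{:}d)$ with the arrangement $\sigma_1$ obtained from it by recoloring $A_1$ to $1$. In $\sigma_0$ sage $A_{n-1}$ guesses $c$ and is right, so $A_1$ is wrong there; in $\sigma_1$ sage $A_n$ guesses $d$ and is right, so $A_1$ is wrong there too. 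But $A_1$ sees exactly the same hats in $\sigma_0$ and $\sigma_1$, so it makes one and the same guess $w\in\{0,1\}$, and $w=0$ makes $A_1$ right in $\sigma_0$ while $w=1$ makes it right in $\sigma_1$ --- a contradiction. The point is that with only two colors ``$A_1$ wrong in $\sigma_0$ and in $\sigma_1$'' is self-contradictory.

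For part~2 ($h(A_1)=3$, $h(A_n)=2$) I would run the same idea with a pigeonhole over the three colors of $A_1$. Fix $x_2,\dots,x_{n-2}$ as before. For $t\in\{0,1,2\}$ let $d_t\in\{0,1\}$ be the guess of $A_n$ and $c_t$ the guess of $A_{n-1}$ when $A_1$ wears $t$ (and $A_j$ wears $x_j$), and let $w(c,d)$ be the guess of $A_1$ when $A_{n-1}$ wears $c$ and $A_n$ wears $d$ (and $A_j$ wears $x_j$); $w$ does not depend on $A_1$'s hat. The key observation is that $w(c,d)=t$ forces $d=1-d_t$: if $A_1$ wore $t$ in such an arrangement it would be right, hence $A_n$ would be wrong, i.e.\ $d\neq d_t$, and since $a_n=2$ this means $d=1-d_t$. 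Now choose a value $v\in\{0,1\}$ occurring at least twice among $d_0,d_1,d_2$. If all three equal $v$, then $w(c,v)$ could take no legal value (it would have to equal some $t$ with $1-d_t=v$, impossible), which is absurd. Hence exactly one index $t^{\ast}$ has $d_{t^{\ast}}=1-v$ and the other two have $d_t=v$, and the observation then forces $w(c,v)=t^{\ast}$ for every $c$. Taking $c=c_{t^{\ast}}$, $d=v$: in the arrangement $(A_1{:}t^{\ast},\ x_\bullet,\ A_{n-1}{:}c_{t^{\ast}},\ A_n{:}v)$ sage $A_1$ guesses $w(c_{t^{\ast}},v)=t^{\ast}$ and is right, while $A_{n-1}$ guesses $c_{t^{\ast}}$ and is also right, contradicting the partition property.

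I expect the main obstacle to be conceptual rather than computational: recognising that the right objects to compare are arrangements that agree on $A_2,\dots,A_{n-2}$ and in which the colors of $A_{n-1},A_n$ are set equal to their predetermined guesses, so that the partition property collides with the rigidity of $A_1$'s guess. In part~2 the only extra ingredient is the hypothesis $h(A_n)=2$, which is precisely what converts ``$A_n$ is wrong'' into ``$A_n$ wears the unique other color'' and thereby makes the pigeonhole over $A_1$'s three colors close; without it the three colors of $A_1$ leave too much freedom for this argument.
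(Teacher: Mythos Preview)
Your proof is correct and follows essentially the same approach as the paper: both parts first extract from the equality~\eqref{eq:clique-minus-edge} the ``partition'' (almost precise) property that $A_1$ cannot guess correctly simultaneously with either of $A_{n-1},A_n$, then fix the colors of $A_2,\dots,A_{n-2}$, set the hats of $A_{n-1}$ and $A_n$ equal to suitable values of their own guess functions, and derive a contradiction from the rigidity of $A_1$'s guess (using pigeonhole on $A_n$'s two colors in part~2). The only cosmetic difference is viewpoint: the paper phrases the argument as the adversary constructing an arrangement in which $A_1$ \emph{and} one of $A_{n-1},A_n$ are both correct, whereas you phrase it as $A_1$ being forced to be \emph{wrong} on every color of its hat --- these are the same contradiction.
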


\begin{proof}
  Let the sages fix some strategy.

1) Let the adversary give to the sages $A_2$, \dots, $A_{n-2}$ an arbitrary
    collection of hats. We determine which color sage $A_{n-1}$ says in accordance with his
    strategy when the hat of $A_1$'s is of color 0, and give the hat of this
    color to $A_{n-1}$. Analogously we determine which color sage $A_{n}$ says in accordance with his
    strategy when the hat of $A_1$ is of color~1 and give the hat of this
    color to $A_{n-1}$. Thus we give the hats to all the sages except $A_1$, and hence the guess of
    the sage $A_{1}$ is now determined. We can give him a hat of the color
    he guesses correctly. Then two sages seeing each other guess
    correctly. But this is impossible in almost precise games.

2) Let the adversary give to the sages $A_2$, \dots, $A_{n-2}$ an arbitrary
    collection of hats. There are hats of three colors for $A_1$. Let us consider
    the guess of $A_n$ in accordance with the strategy for each of these three colors.
    In two of the cases, the sage $A_n$ says the same color and we give him the hat of this
    color. If in the third case $A_n$ says another color, then we give
    $A_{n-1}$ the hat of the color which  $A_{n-1}$ says in this case (so
    he will guess). Otherwise, give to $A_{n-1}$ an arbitrary hat. Thus regardless
    of the color $A_{1}$'s hat, one of the sages $A_{n-1}$, $A_{n}$ 
    guesses correctly. Now the answer of $A_1$ in accordance with the strategy is determined.
    We give $A_1$ a hat the color of which he guesses correctly. Thus
    two sages seeing each other guess correctly, which is impossible.
\end{proof}

For example, the sages lose on almost complete graph on $4$ vertices $A_1$,
$A_2$, $A_3$, $A_4$ (edge $A_3A_4$ is absent), where $h(A_1)=2$, $h(A_2)=10$,
$h(A_3)=4$, $h(A_4)=5$.

Finally, we demonstrate an example, where the
equality~\eqref{eq:clique-minus-edge} holds and almost precise game is possible.

\begin{lemma}
  Let $G$ be an almost complete graph on 4 vertices $A$, $B$, $C$, $D$, in which
  edge~$CD$ has been removed. Let $h(A)=6$, $h(B)=6$, $h(C)=2$, $h(D)=3$. Then
  the sages win.
\end{lemma}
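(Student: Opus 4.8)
The plan is to exhibit a winning strategy by combining a ``sum modulo $6$'' idea for $C$ and $D$ with a matching argument for $A$ and $B$. Identify the set of colors of $A$ and of $B$ with $\ZZ/6\ZZ\cong\ZZ/2\ZZ\times\ZZ/3\ZZ$; write $a,b$ for the hats of $A,B$, write $c\in\ZZ/2\ZZ$ for the hat of $C$ and $d\in\ZZ/3\ZZ$ for the hat of $D$. Since $C$ and $D$ both see $a$ and $b$, let $C$ guess $a+b\bmod 2$ and let $D$ guess $a+b\bmod 3$. An arrangement escapes both $C$ and $D$ precisely when $c\neq a+b\bmod 2$ and $d\neq a+b\bmod 3$; call such arrangements \emph{bad}.

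For fixed values of $c$ and $d$, form the bipartite graph $H_{c,d}$ with both parts equal to $\ZZ/6\ZZ$ and with an edge joining $a$ to $b$ whenever $(a,b,c,d)$ is bad. The first step is to check that every $H_{c,d}$ is $2$-regular: fixing $a$ and letting $b$ run over $\ZZ/6\ZZ$, the pair $(a+b\bmod 2,\ a+b\bmod 3)$ runs bijectively over $\ZZ/2\ZZ\times\ZZ/3\ZZ$ by the Chinese remainder theorem, so exactly $1\cdot 2=2$ values of $b$ yield a bad arrangement; by symmetry the same count holds when $b$ is fixed. Hence each $H_{c,d}$ is a disjoint union of even cycles, and therefore its edge set splits into two perfect matchings $M_1(c,d)$ and $M_2(c,d)$.

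Now define the strategies of the remaining sages. Seeing $b,c,d$, sage $A$ announces the unique $a'$ with $(a',b)\in M_1(c,d)$; seeing $a,c,d$, sage $B$ announces the unique $b'$ with $(a,b')\in M_2(c,d)$. These are well defined because every vertex of $H_{c,d}$ has degree $2$ and hence is covered by each perfect matching (on views that arise only from non-bad arrangements the announced value is irrelevant). Given an arbitrary arrangement $(a,b,c,d)$: if $C$ or $D$ guesses correctly we are done; otherwise the arrangement is bad, so the edge $\{a,b\}$ belongs to $H_{c,d}$, hence to $M_1(c,d)$ or to $M_2(c,d)$, and correspondingly $A$ guesses $a$ correctly or $B$ guesses $b$ correctly. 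Thus the sages always win.

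The only real content is the $2$-regularity of $H_{c,d}$, which is precisely where equality~\eqref{eq:clique-minus-edge} (here $\tfrac16+\tfrac16+\tfrac12+\tfrac13-\tfrac16=1$) enters: it forces the bad set to have size $72$, exactly matched by the $36+36$ arrangements that $A$ and $B$ are able to cover, so no slack is available and the matchings must be perfect. Writing the matchings out explicitly is routine and could be included: the $\ZZ/2$-coordinates are forced by $a\equiv b+c+1\pmod 2$, while on the $\ZZ/3$-coordinates $H_{c,d}$ restricts to a $6$-cycle whose two perfect matchings are $b\equiv d+1-a$ and $b\equiv d+2-a\pmod 3$, giving closed-form guesses for $A$ and $B$. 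I do not anticipate any genuine obstacle beyond this bookkeeping.
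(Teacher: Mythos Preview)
Your proof is correct and follows essentially the same approach as the paper: both have $C$ guess $a+b\bmod 2$ and $D$ guess $a+b\bmod 3$, and then let $A$ and $B$ cover the two remaining residue classes $a+b\equiv d+1$ and $a+b\equiv d+2\pmod 3$ (together with $a+b\equiv c+1\pmod 2$). The paper writes down these two CRT-based guesses directly, while you first recast the problem as a $2$-regular bipartite graph and invoke a perfect-matching decomposition before arriving at the very same explicit formulas in your final paragraph; the matching detour is sound but not needed here, since the two matchings are visible immediately from the arithmetic.
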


\begin{proof}
  We interpret the hat colors of sages $A$ and $B$ as integers modulo~6,  color of~$C$
  as integer modulo~2,  color of~$D$ as integer modulo~3. Denote the hat colors of
  sages $A$ and $B$ by $a$ and~$b$. Let sage $C$ say color $c= (a+b) \bmod 2$,
  let sage $D$ say color $d= (a+b) \bmod 3$. If sages $C$ and $D$ did not guessed
  correctly,then $a+b=c+1 \bmod 2$  holds and also either $a+b=d+1 \bmod 3$ or $a+b=d+2 \bmod 3$. Then let $A$ compute
  his own color assuming that $a+b=c+1 \bmod 2$  and  $a+b=d+1 \bmod 3$, and $B$
  compute his color assuming that $a+b=c+1 \bmod 2$  and  $a+b=d+2 \bmod 3$.
\end{proof}

This result can also be obtained by using constructor of theorem~\ref{thm:addA2to_edge}.

\subsection{Maximum number of hats}

We present a funny corollary of theorem~\ref{thm:clique-win}. We ask
what is the maximum number of hats given to a sage in a winning graph
on $n$ vertices? To make the question meaningful we require that the hat function
makes the graph \emph{simple}, i.\,e.\ for each its subgraph the sages do not win
on this subgraph. Obviously, it is sufficient to find the maximum number for complete
graphs.

So the question is equivalent to the following number-theoretical combinatorial
problem: given~$n$, find  $\max(a_1, a_2, \dots, a_n)$, where the positive
integers $a_i$ satisfy relation~\eqref{eq:clique-win}. The solution of this problem is known, namely,
this maximum is determined by Sylvester's sequence $(s_n)$:
$$
s_0=2, \qquad s_n=1+\prod\limits_{i=0}^{n-1}s_i.
$$
and $\max(a_1, a_2, \dots, a_n)=s_n-1$. The proof can be found
in~\cite{Soundararajan2005}.

Sylvester's sequence grows very quickly, for example, $s_8$ is a 27-digit number. Thus
if 8 sages are going to win in the \hats game on the complete graph then 
one of them can be given 27-digit number of hats! In recreational mathematics the
phrases ``number 8'' and ``large numbers'' are associated with the story about
of the inventor of chess, who asked to be given $2^{64}-1$ wheat grains as reward. The
number $2^{64}-1$ has ``only'' 21 digits. In fairness, we note that both sequences
grow as $C^{2^n}$, where $C$ is a constant.

\section{Constructors}

In this section we describe several constructors.
Each constructor is a theorem providing a tool which allows to construct
new winning games by combining several graphs in a new graph.

\subsection{Product}

\subsubsection{Constructor ``Product''}\label{subsubsec:product}

\begin{definition}
  Let $A\in V(G)$. We say that a winning graph satisfies the \emph{maximum condition
    in vertex~$A$} if increasing the hatness of vertex $A$ by 1 makes the graph
  losing.

  Let $G_1=\langle {V_1, E_1} \rangle$, $G_2=\langle {V_2, E_2} \rangle$ be two
  graphs sharing common vertex $A$. \emph{The sum of graphs $G_1$, $G_2$ with
  respect to vertex $A$} is the graph $\langle {V_1\cup V_2, E_1\cup E_2} \rangle$. 
  The sum is denoted by  $G_1\gplus{A}G_2$.

  Let $\HG_1 =\left\langle G_1, h_1 \right\rangle$, $\HG_2 =\left\langle G_2, h_2
  \right\rangle$ be two games such that $V_1\cap V_2 = \{A\}$. The game  $\HG =
  \langle {G_1 \gplus{A} G_2, h} \rangle$, where $h(v)$ equals $h_i(v)$ for $v\in
  V(G_i)\setminus\{A\}$ and $h(A)=h_1(A) \cdot h_2(A)$
  (fig.~\ref{fig:multiplication}), is called \emph{a product} of games  $\HG_1$,
  $\HG_2$ with respect to vertex $A$. This product is denoted by $\HG_1\gtimes{A}
  \HG_2$.

  In such constructions, it is convenient to define the color of vertex $A$ as a
  pair $(c_1, c_2)$, where $0\leq c_1\leq h_1(A)-1$, $0\leq c_2\leq h_2(A)-1$.
  In this case, wWe say that $A$ has \emph{composite color}.
\end{definition}

\begin{theorem}[on the product of games]\label{thm:multiplication}
  Let $\HG_1 = \langle {G_1^A, h_1} \rangle$ and $\HG_2 = \langle {G_2^A, h_2}
  \rangle$ be two games such that $V(G_1)\cap V(G_2)=\{A\}$. If the sages win in
  games $\HG_1$ and $\HG_2$, then they also win in game  $\HG = \HG_1\gtimes{A}
  \HG_2$.
\end{theorem}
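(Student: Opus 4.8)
The plan is to produce a winning strategy for $\HG$ by splicing together the given winning strategies for $\HG_1$ and $\HG_2$, with the sage at $A$ running both of his sub-strategies in parallel and reporting the pair of their outputs as his composite color. So fix winning collective strategies $\{f_v^{(1)} \mid v\in V(G_1)\}$ for $\HG_1$ and $\{f_v^{(2)} \mid v\in V(G_2)\}$ for $\HG_2$. The relevant structural fact about $G=G_1\gplus{A}G_2$ is that, since $V(G_1)\cap V(G_2)=\{A\}$, the neighborhood $N_G(A)$ is the disjoint union of $N_{G_1}(A)$ and $N_{G_2}(A)$, while every other vertex $v\in V(G_i)\setminus\{A\}$ has $N_G(v)=N_{G_i}(v)$.

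Define the strategy for $\HG$ as follows. A sage $v\in V(G_i)\setminus\{A\}$ observes the colors of his neighbors; these coincide with what he would see in $\HG_i$, except that when $A\in N_{G_i}(v)$ the color of $A$ is now a pair $(c_1,c_2)$. He discards the other coordinate, feeds $c_i$ into $f_v^{(i)}$ in place of $A$'s color, and announces the result. The sage at $A$ observes the colors on $N_{G_1}(A)$ and on $N_{G_2}(A)$ separately; he computes $c_1$ as $f_A^{(1)}$ applied to the $G_1$-part and $c_2$ as $f_A^{(2)}$ applied to the $G_2$-part, and announces the composite color $(c_1,c_2)$. Note that $(c_1,c_2)$ is a legitimate color of $A$ in $\HG$ by the definition of the product.

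To check this wins, fix an arbitrary hat arrangement $\varphi$ on $G$ and write $\varphi(A)=(c_1,c_2)$. Let $\varphi_i$ be the arrangement on $G_i$ that agrees with $\varphi$ on $V(G_i)\setminus\{A\}$ and assigns $A$ the color $c_i$. The key observation is twofold. First, every sage $v\in V(G_i)\setminus\{A\}$ announces in $\HG$ under $\varphi$ exactly the color he would announce in $\HG_i$ under $\varphi_i$, because the colors he observes are the same in both situations (the only possible discrepancy sits at $A$, and it is resolved by reading coordinate $i$, which is $\varphi_i(A)$ by construction). Second, the sage at $A$ announces $\varphi(A)=(c_1,c_2)$ in $\HG$ precisely when $f_A^{(1)}$ outputs $c_1$ and $f_A^{(2)}$ outputs $c_2$ — that is, precisely when the sage at $A$ guesses correctly in $\varphi_1$ and in $\varphi_2$ at the same time.

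Now case-split. If the sage at $A$ guesses correctly in both $\varphi_1$ and $\varphi_2$, he is correct in $\HG$ under $\varphi$, and the sages win. Otherwise the sage at $A$ fails in $\varphi_i$ for some $i$; since $\{f_v^{(i)}\}$ is a winning strategy for $\HG_i$, some sage $v\in V(G_i)\setminus\{A\}$ guesses correctly in $\varphi_i$, and by the first observation the same sage $v$ then guesses correctly in $\HG$ under $\varphi$. In every case at least one sage is right, so the sages win in $\HG$. I expect the only point demanding care is the bookkeeping in the two observations — that a neighbor of $A$ faithfully recovers the $i$-th coordinate of the composite color, and that correctness of the $A$-sage in $\HG$ is exactly the conjunction of his correctness in the two subgames; beyond this the argument is a straightforward case analysis with no genuine combinatorial difficulty.
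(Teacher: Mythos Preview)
Your proof is correct and follows exactly the same approach as the paper: each sage outside $A$ plays his original strategy in the relevant $\HG_i$ (reading only the $i$-th coordinate of $A$'s composite color when $A$ is a neighbor), while $A$ plays both strategies in parallel and outputs the pair. Your write-up is in fact more careful than the paper's, which asserts the concluding case analysis without spelling out the two ``bookkeeping'' observations you isolate.
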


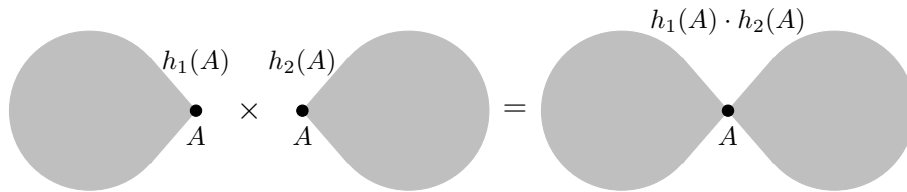
\begin{figure}[h]
\kern-1.7cm
  \centering
    \begin{tikzpicture}[
      scale=.7,
      every label/.append style = {font=\footnotesize},
      every node/.style = win_nodes,
      every edge/.append style = win_edges
      ]
      \foreach [count=\i] \x in {0, 6, 10, 14}
      \node[circle,draw,lightgray] (c\i) at (\x, 0) [minimum size=60pt] {};

      \foreach [count=\i] \x in {2, 4, 12, 12}
      {
        \coordinate (a\i) at (\x, 0);

        \draw[lightgray, fill=lightgray] (a\i) --
        (tangent cs:node=c\i,point={(a\i)},solution=1) --
        (tangent cs:node=c\i,point={(a\i)},solution=2) -- cycle;
      }

      \begin{scope}[every node/.style = {}]
        \node at (3, 0) {{$\times$}};
        \node at (8, 0) {{$=$}};
      \end{scope}

      \node[black, label=above:$h_1(A)$, label=below:$A$] (v) at (2, 0) {};
      \node[black, label=above:$h_2(A)$, label=below:$A$] (u) at (4, 0) {};
      \node[black, label=above:$h_1(A)\cdot h_2(A)$, label=below:$A$] (vu) at (12, 0) {};
    \end{tikzpicture}
  \caption{The product of games.}
  \label{fig:multiplication}
\end{figure}

\begin{proof}
  Let the hat of sage $A$ have composite color $(c_1,c_2)$, where $c_i$ is the
  hat color of~$A$ in game~$\HG_i$. We fix winning strategies for games $G_1$ and
  $G_2$ and construct strategy for the game $\HG_1\gtimes{A} \HG_2$. Let all the sages
  in the graph $G_i\setminus\{A\}$ play according to the winning strategy for the game  $\HG_i$
  (the neighbors of $A$ in $G_i$ look only at the component $c_i$ of the 
  composite color of $A$). The sage~$A$ plays in accordance with both strategies by giving composite
  answer $(c_1,c_2)$; where $c_i$ ($i=1,2$) corresponds to his winning strategy
  for the game~$\HG_i$  (for calculating the answer, sage~$A$ looks only on his
  neighbors in the graph~$G_i$).

  The presented strategy is winning because either someone from~
  $G_1\setminus\{A\}$  or from~$G_2\setminus\{A\}$  will guess correctly, or
  $A$ guesses correctly both components of his color.
\end{proof}

\begin{corollary}\label{cor:tree_with_exp_colors}
  Let graph $G$ be a tree. The sages win in the game $\langle G, h\rangle$, where $h(v)=2^{deg(v)}$.
\end{corollary}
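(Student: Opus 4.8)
The plan is to prove Corollary~\ref{cor:tree_with_exp_colors} by induction on the number of vertices of the tree $G$, using Theorem~\ref{thm:multiplication} (the product constructor) together with the base case given by Theorem~\ref{thm:clique-win} applied to a single edge $K_2$.

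\textbf{Base case.} For a tree with one vertex there are no edges, $h(v)=2^0=1$, and the lone sage wins trivially (he has only one possible hat color, so he always guesses correctly). For a single edge $K_2$ with vertices $u,v$ we have $\deg(u)=\deg(v)=1$, so $h(u)=h(v)=2$; then $\frac1{h(u)}+\frac1{h(v)}=\frac12+\frac12=1\geq 1$, and by Theorem~\ref{thm:clique-win} the sages win. This is the building block for the inductive step.

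\textbf{Inductive step.} Let $G$ be a tree with at least two vertices. Pick a leaf $\ell$ of $G$ and let $A$ be its unique neighbor; say $\deg_G(A)=d\geq 1$. Let $G_1$ be the single edge $\{A,\ell\}$ and let $G_2=G\setminus\{\ell\}$, which is again a tree, with $V(G_1)\cap V(G_2)=\{A\}$ and $G=G_1\gplus{A}G_2$. In $G_2$ the degree of $A$ is $d-1$, while the degree of every other vertex is unchanged. Define $h_2$ on $G_2$ by the exponential rule for the tree $G_2$, i.e.\ $h_2(w)=2^{\deg_{G_2}(w)}$; by the induction hypothesis the sages win in $\HG_2=\langle G_2,h_2\rangle$. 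Define $h_1$ on $G_1$ by $h_1(A)=2$, $h_1(\ell)=2$; the sages win in $\HG_1=\langle G_1,h_1\rangle$ by the base case. Applying Theorem~\ref{thm:multiplication} to $\HG_1$ and $\HG_2$ at vertex $A$, the sages win in $\HG=\HG_1\gtimes{A}\HG_2$, which is the game on $G_1\gplus{A}G_2=G$ with hat function $h$ satisfying $h(\ell)=h_1(\ell)=2=2^{\deg_G(\ell)}$, $h(w)=h_2(w)=2^{\deg_{G_2}(w)}=2^{\deg_G(w)}$ for $w\in V(G_2)\setminus\{A\}$, and $h(A)=h_1(A)\cdot h_2(A)=2\cdot 2^{d-1}=2^{d}=2^{\deg_G(A)}$. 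Thus $h(v)=2^{\deg_G(v)}$ for every vertex, completing the induction.

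\textbf{Main obstacle.} There is no serious obstacle: the only point requiring care is the bookkeeping of degrees when a leaf is removed — verifying that removing $\ell$ decreases $\deg(A)$ by exactly one and leaves all other degrees untouched, so that the product $h_1(A)\cdot h_2(A)=2^1\cdot 2^{\deg_G(A)-1}$ reproduces exactly $2^{\deg_G(A)}$. One should also note that if $G$ is disconnected, the argument still works by applying it to each component separately (or one simply restricts attention to trees as connected acyclic graphs, which is the usual convention). Everything else is an immediate invocation of the two already-proved theorems.
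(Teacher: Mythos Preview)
Your proof is correct and follows essentially the same approach as the paper: both build the game on $G$ as a product of $|E(G)|$ copies of $\langle P_2,2\rangle$ via Theorem~\ref{thm:multiplication}. The paper states this in one line (``multiplying $|E(G)|$ copies of this game''), whereas you unwind the same construction as an explicit induction by peeling off a leaf at each step; the paper even remarks that the inductive form appears in~\cite{bosek19_hat_chrom_number_graph}.
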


\begin{proof}
  The sages win in the classical game $\langle {P_2, 2} \rangle$. Multiplying
  $|E(G)|$ copies of this game, we get the required result.
\end{proof}

Corollary~\ref{cor:tree_with_exp_colors} was proved in~\cite[theorem
11]{bosek19_hat_chrom_number_graph} by induction.

In the sequel, We use the following notation for a hat function taking constant value on
the whole graph except for several vertices. Let $A$, $B$, $C$ be some vertices
of the graph. The notation $h_4^{A2B2C3}$ represents a hat function, for which $h(A)=2$,
$h(B)=2$, $h(C)=3$ (superscript) and $h(V)=4$ for all other $v\in V(G)$ (subscript).

The following corollary is an important special case of the previous one.

\begin{corollary}\label{cor:path2442}
  The sages win in the game $\langle {P_n(AB), h_4^{A2B2}} \rangle$.
\end{corollary}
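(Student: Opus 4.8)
The plan is to realize the path $P_n(AB)$ as an iterated product of small games, each of which is already known to be winning, and then invoke Theorem~\ref{thm:multiplication} to conclude. The key observation is that a path on $n$ vertices decomposes into $n-1$ edges, and every internal vertex of the path has degree $2$ while the two endpoints $A$ and $B$ have degree $1$. So the target hat function $h_4^{A2B2}$ assigns $4=2^2$ colors to each internal vertex and $2=2^1$ colors to each of $A$ and $B$ — exactly the function $h(v)=2^{\deg(v)}$ from Corollary~\ref{cor:tree_with_exp_colors}, specialized to the tree that happens to be a path.

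Concretely, I would first note that $P_n$ is a tree. Then I would apply Corollary~\ref{cor:tree_with_exp_colors} directly to $G=P_n(AB)$: it tells us the sages win in the game $\langle P_n, h\rangle$ with $h(v)=2^{\deg(v)}$. It remains only to check that this $h$ coincides with $h_4^{A2B2}$. In $P_n(AB)$ the endpoints $A$ and $B$ have degree $1$, so $h(A)=h(B)=2^1=2$, matching the superscript ``$A2B2$''; every other vertex is internal with degree $2$, so $h(v)=2^2=4$, matching the subscript ``$4$''. Hence $h=h_4^{A2B2}$ and the sages win, which is the claim.

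If one instead wants to see the product structure explicitly (rather than quoting Corollary~\ref{cor:tree_with_exp_colors} as a black box), the plan is: label the vertices of the path $A=v_1, v_2, \dots, v_n=B$, and for each edge $e_k=v_kv_{k+1}$ take a copy of the classical winning game $\langle P_2, 2\rangle$ on that edge. Form the product of all $n-1$ of these games with respect to the shared vertices one at a time; each internal vertex $v_k$ ($2\le k\le n-1$) is shared by exactly the two edges $e_{k-1}$ and $e_k$, so its hatness becomes $2\cdot 2=4$, while the endpoints belong to a single edge and keep hatness $2$. By repeated application of Theorem~\ref{thm:multiplication}, the resulting game — which is exactly $\langle P_n(AB), h_4^{A2B2}\rangle$ — is winning.

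There is essentially no obstacle here: the statement is a direct corollary, and the only thing to be careful about is bookkeeping the degrees at the endpoints versus the interior so that the claimed hat function matches $2^{\deg(v)}$, and (in the explicit version) noting that forming the product over several shared vertices is legitimate because any two distinct edges of a path share at most one vertex, so each pairwise product satisfies the hypothesis $V(G_1)\cap V(G_2)=\{A\}$ of Theorem~\ref{thm:multiplication}.
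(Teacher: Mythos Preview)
Your proposal is correct and matches the paper's approach exactly: the paper states this corollary as ``an important special case'' of Corollary~\ref{cor:tree_with_exp_colors}, and your argument spelling out that $h_4^{A2B2}$ coincides with $2^{\deg(v)}$ on $P_n(AB)$ is precisely what is needed. Your explicit product decomposition is also fine and is in fact the content of the proof of Corollary~\ref{cor:tree_with_exp_colors} specialized to paths.
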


We note, by the way, that together with theorem~\ref{thm:multiplication}
this corollary  is strengthened analog of lemma on ``pushing a hint''~\cite[lemma
10]{Kokhas2018}. Namely, if we consider the hatness 2 of vertex $A$ as a hint,
which bounds the number of colors for sage $A$ (there should be 4 colors, but we
simplify the game for this sage), then we can ``push'' this hatness 2 along path
$AB$, where all the other sages have hatness $4$. As a result, we see that in graph
$G_1\gplus{A} P_n(AB)$ this hatness 2 ``moves'' from vertex $A$ to vertex $B$.

\subsubsection{Non-maximality of products}

Theorem~\ref{thm:multiplication} shows, that when we stick together two winning
graphs $G_1$ and $G_2$ by vertex $A$, we can greatly increase the hatness of
vertex $A$, so that the obtained game is still winning. It is natural to
assume, that the initial games are simple. In the following example we can even
more increase $h(A)=h_1(A)h_2(A)$ keeping the graph winning. This works even in the
case when both graphs $G_i$ satisfy the maximum condition in vertex $A$ (and
in all other vertices, too).

Let graphs $G_1$ and $G_2$ be complete graphs with 5 vertices and hatnesses 4,
5, 5, 5, 6. These graphs are winning by theorem~\ref{thm:clique-win}.
If we increase the hatness of any vertex, then inequality~\eqref{eq:clique-win} is violated. So
these graphs satisfy the maximum condition in all vertices. Let $A$ be the
vertex with hatness $6$ in both graphs. Let $\HG_{37}=\langle {G_1\gplus{A} G_2,
  h}\rangle$, where $h$ has the same values for all vertices except $A$ as in
the initial graphs, and $h(A)=37=6\cdot 6 + 1$ (fig.~\ref{fig:big-bow}).

\begin{theorem}\label{thm:bantik}
  The sages win in game $\HG_{37}$.
\end{theorem}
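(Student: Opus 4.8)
The plan is to describe an explicit guessing strategy for the $37$ sages on $G_1 \gplus{A} G_2$ that beats the two arithmetic-style strategies on the cliques $G_1$, $G_2$ only at the common vertex $A$, exploiting the one "spare" color $36$. First I would set up coordinates: identify the hat colors of the sages in $G_1$ other than $A$ with residues appropriate to the clique strategy of Proof~2 of Theorem~\ref{thm:clique-win} (hatnesses $4,5,5,5$), and similarly for $G_2$; for the clique $G_1\cup\{A\}$ with hatnesses $4,5,5,5,6$ the arithmetic strategy of Proof~2 works modulo $N_1=\mathrm{LCM}(4,5,5,5,6)=60$, and similarly for $G_2$ modulo $N_2=60$. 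The key point is that $\frac14+\frac15+\frac15+\frac15 = \frac{17}{20} < 1$, so the four non-$A$ sages of $G_1$ alone, using their arithmetic hypotheses, cover only $\frac{17}{20}$ of the arrangements; $A$'s hypothesis in $G_1$ normally covers the remaining fraction $\frac1{6} \cdot \frac{6}{?}$ — more precisely, with $d_k = N_1/a_k$, the sages of $G_1\setminus\{A\}$ cover $d_1+\dots+d_4 = 15+12+12+12 = 51$ residues mod $60$, leaving $9$ residues, and $d_A = 10 > 9$, so $A$ has one residue of slack in each clique.

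The main idea: since $A$ has $37 = 36+1$ colors with composite color $(c_1,c_2)\in\ZZ_6\times\ZZ_6$ plus one extra color "$\star$", let $A$ behave exactly as in the product strategy (Theorem~\ref{thm:multiplication}) whenever the composite strategies would have $A$ guess a genuine pair; the question is whether the single uncovered arrangement — the one where $A$'s hat is $\star$ and nobody in either wing is forced to be right — can be handled. So I would first run the product strategy for all $36$ non-$\star$ values of $A$'s hat, verifying via Theorem~\ref{thm:multiplication} that the sages win on every arrangement in which $A$'s color is not $\star$. Then I focus on arrangements where $h$-value of $A$ equals $\star$: here the wing sages of $G_1$ see some arrangement on $N_{G_1}(A)$ and, not seeing a pair-color, must fall back; I would have each wing sage of $G_1$ compute the arithmetic hypothesis as if $c_1 = 0$ (a fixed default), and symmetrically for $G_2$ with $c_2 = 0$. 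Because the four wing sages of $G_1$ cover $51$ of the $60$ residues of the $G_1$-sum, for all but $9/60$ of the $G_1$-wing arrangements one of them is correct regardless; similarly for $G_2$. The arrangements where BOTH wings fail form a $\frac{9}{60}\cdot\frac{9}{60}$ fraction — but we have one sage left, namely $A$ himself when his color is $\star$, and we need: among arrangements with $A = \star$ and both wings failing, $A$ can still be made right. Since $A$ sees everything, the set of $\star$-arrangements on which both wings fail has size $\frac{9}{60}\cdot\frac{9}{60}\cdot(\text{wing sizes})$, and $A$'s being correct contributes a $\frac1{37}$ fraction — we need $\frac1{37} \ge \frac{81}{3600}$, i.e. $3600 \ge 81\cdot 37 = 2997$, which holds, so there is enough room, but a fraction count alone is not a strategy. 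Instead I would argue directly: when $A$'s hat is $\star$, $A$ sees the $G_1$-wing arrangement and the $G_2$-wing arrangement; he knows whether each wing's arithmetic sum lands in the $9$ uncovered residues. Define a function assigning to each $\star$-arrangement a color for $A$; we need this function to be correct on every $\star$-arrangement where both wings fail. But $A$ cannot see his own hat — when his hat is $\star$ he knows it is $\star$ (that is not a pair), so actually $A$'s guess "$\star$" is always available and correct precisely when his hat is $\star$! Hence on every arrangement where $A$'s hat is $\star$, sage $A$ simply guesses $\star$ and is correct.

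That last observation collapses the whole problem: the strategy is "$A$ guesses $\star$ if and only if the $G_1$- and $G_2$-wing arrangements are such that neither wing's arithmetic hypothesis can catch the sum (something $A$ can check, since $A$ sees both wings entirely and, not seeing his own hat, reasons about both possibilities $\star$ and non-$\star$); otherwise $A$ plays the composite-color product strategy." The real verification I would carry out carefully is the case split on $A$'s actual hat: (i) if $A$'s hat is a genuine pair $(c_1,c_2)$, then I need $A$ NOT to have wasted his guess on $\star$ in the "bad wings" configuration — here I use that in the bad configuration $A$ guesses $\star$, which is wrong, so I need one of the wing sages to be right on a genuine pair, which is exactly the clique strategy of $G_1\cup\{A\}$ projected correctly, and the accounting $51 + 10 > 60$ (resp. for $G_2$) guarantees at least one of: a $G_1$-wing sage is right, a $G_2$-wing sage is right, or $A$ on component $c_1$ is right, or $A$ on component $c_2$ is right; (ii) if $A$'s hat is $\star$, then either the configuration is "good" for at least one wing, in which case a wing sage is right, or it is "bad", in which case $A$ guesses $\star$ correctly. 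The main obstacle, and where the genuine combinatorial content lies, is showing that in case (i) the bad-wing configuration is consistent with enough residues being covered by the wing sages plus $A$'s two components — i.e., checking that assigning $A$'s two component-guesses by the arithmetic rules of the two cliques, restricted to the $\le 9+9$ residues the wings miss, still covers everything; this amounts to the inequality $d_A^{(1)} + d_A^{(2)} = 10 + 10 \ge 9 + 9 + (\text{overlap correction})$ and must be checked by a small explicit residue computation modulo $60$ in each clique, which I would do after fixing the concrete hatness order $4,5,5,5,6$.
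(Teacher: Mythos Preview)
Your approach has a genuine gap that cannot be patched within the ``$36+1$'' decomposition. Take any wing arrangement for which both $S_1^\star$ and $S_2^\star$ (the $G_1$- and $G_2$-sums computed with the default $c_1=c_2=0$) land in the $9$-element residue set $\{0,52,\dots,59\}\pmod{60}$ not covered by the wing sages. By your rule, $A$ then guesses~$\star$. But if $A$'s actual hat is the pair $(0,0)$, the wing sages see $c_1=0$, $c_2=0$, compute exactly the same sums $S_1^\star$, $S_2^\star$, and all guess wrong; $A$'s guess of $\star$ is also wrong, so the sages lose. More generally, whatever default pair $(d_1,d_2)$ you choose for the wings' $\star$-mode, the two hat colors $\star$ and $(d_1,d_2)$ are left uncovered by the wings on exactly the same set of wing arrangements, and $A$'s single guess can cover only one of them. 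Nor can this be repaired by a cleverer $\star$-mode wing strategy $\sigma$: any strategy on $K_4$ with hatnesses $4,5,5,5$ fails on at least $75$ of the $500$ arrangements, while the wing arrangements with \emph{no} bad pair (those with $S_1^\star\equiv 1\pmod{10}$) number only $50$, so the failure set of $\sigma$ cannot sit inside them. Your closing inequality $10+10\ge 9+9$ is beside the point --- $A$ makes one guess, not twenty.

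The paper's argument avoids splitting off a $37$th color altogether. It works modulo $740=4\cdot5\cdot37$, encoding $A$'s $37$ colors directly as the multiples of $20$. In each clique the four non-$A$ sages use the arithmetic hypotheses of Theorem~\ref{thm:clique-win} and cover all but a block of residues, leaving $A$ a set of $6$ consecutive candidate colors per clique. The essential trick is that the two cliques read $A$'s color through two different bijections of $\ZZ/37\ZZ$, namely $x\mapsto x$ and $x\mapsto 6x$; thus $A$'s two candidate sets become a $6$-term arithmetic progression of common difference~$1$ and one of common difference~$6$ modulo $37$, and any two such progressions intersect in at most one residue. $A$ names that residue and wins.
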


\begin{figure}[h]%
\setlength{\unitlength}{.7mm}\footnotesize
\hfill
\begin{minipage}[b]{0.4\linewidth}
\setlength{\unitlength}{.35mm}
\begin{center}
\begin{picture}(100,70)(0,-5)
\put(0,20){\circle*{4}}\put(0,40){\circle*{4}}
\put(30,0){\circle*{4}}\put(30,60){\circle*{4}}\put(50,30){\circle*{4}}
\put(70,0){\circle*{4}}\put(70,60){\circle*{4}}
\put(100,20){\circle*{4}}\put(100,40){\circle*{4}}
\put(0,20){\line(3,-2){30}}
\put(0,20){\line(0,1){20}}\put(100,20){\line(0,1){20}}
\put(0,40){\line(3,2){30}}
\put(30,60){\line(2,-3){40}}
\put(30,0){\line(2,3){40}}
\put(70,0){\line(3,2){30}}\put(70,60){\line(3,-2){30}}
\put(0,20){\line(5,1){100}}\put(0,40){\line(5,-1){100}}
\put(0,20){\line(3,4){30}} \put(0,40){\line(3,-4){30}}
\put(30,0){\line(0,1){60}} \put(70,0){\line(0,1){60}}
\put(70,0){\line(3,4){30}} \put(70,60){\line(3,-4){30}}

\put(-8,13){5}  \put(-8,37){5}
\put(103,13){5}\put(104,37){5}
\put(34,-5){5} \put(34,58){4}
\put(61,-5){5} \put(61,58){4}
\put(45,15){37}

\end{picture}
\caption{Game $\HG_{37}$ ``Big bow''.}%
\label{fig:big-bow}%
\end{center}
\end{minipage}
\hfill
\begin{minipage}[b]{0.4\linewidth}
\begin{center}
\begin{picture}(20,28)(0,-5)
\put(0,0){\circle*{2}}\put(10,0){\circle*{2}}
\put(0,10){\circle*{2}}\put(10,10){\circle*{2}}\put(20,10){\circle*{2}}
\put(10,20){\circle*{2}}\put(20,20){\circle*{2}}
\put(0,0){\line(1,0){10}}\put(0,0){\line(1,1){20}}\put(0,0){\line(0,1){10}}
\put(10,0){\line(0,1){20}}\put(10,0){\line(-1,1){10}}
\put(0,10){\line(1,0){20}}\put(10,20){\line(1,0){10}}
\put(20,10){\line(0,1){10}}\put(20,10){\line(-1,1){10}}
\put(-1,-5){5} \put(9,-5){5} \put(21,6){5}
\put(1,11){5}  \put(11,6){5}
\put(11,21){5}  \put(21,21){5}
\end{picture}
\caption{Game ``Medium bow''.}%
\label{fig:mid-bow}%
\end{center}
\end{minipage}
\hfill\quad
\end{figure}

\begin{proof}
  We deal with integers modulo $740=4\cdot5\cdot37$. If the sage hatness equals $k$
  then the possible hat colors of this sage are defined to be the integers modulo 740, divisible
  by~$\frac{740}{k}$. Let us consider a hat arrangement. Denote by $S_1$ and $S_2$ the
  sums of the colors in the left and the right \hbox{5-cliques}. Let the
  sage of hatness~4 in the left clique assumes that the color of his hat is
  such that $S_1\in \{1,2,\ldots,185\}$.
  The possible colors of his hat are $\frac{740}4=185$, $185\cdot 2$, $185\cdot 3$ and
  $185\cdot 4\equiv 0$. For exactly one of these integers the sum $S_1$ belongs to the set
  $ \{1,2,\ldots,185\}$ and the sage says the color corresponding to this integer.
  
  Similarly, three sages with hatness~5 in the left clique check the
  hypotheses $S_1\in \{186,\ldots\, 333\} $, $S_1\in\{334,\ldots\, 481\}$, and
  $S_1\in\{482,\ldots\, 629\} $, respectively (each of these sets contains
  $148=\frac{740}5$ numbers). The remaining hat arrangements (for which
  $S_1\in\{630,\ldots, 740\}$) are left to sage $A$. His hatness equals 37 and his
  colors are residues divisible by 20. Therefore, sage $A$ has a choice of
  $\lceil (740-630+1)/20\rceil=6$ consecutive colors. The right sages acts
  similarly, but replacing $S_1$ with~$S_2$. So sage $A$ has also a choice of 6 consecutive
  colors for the $S_2$-hypothesis.

  Let us describe details of sage $A$'s strategy. Let the sages of left and
  right cliques use different rules for converting the color of sage $A$ into an
  integer. The colors of sage $A$ are in fact integers modulo 37, but we
  consider them as integers modulo~740 which are divisible by~20. Let the sages
  on the left clique convert a color $20x \bmod 740 $ to an integer $x \bmod
  37$. Let in the same moment the sages on the right clique convert a color $20x
  \bmod 740 $ to an integer $6x \bmod 37$. (The map $x\mapsto 6x$ is a bijection
  on the set of integers modulo~37.)

  As is easily seen, any sets $\{x,x+1,\ldots,x+5\}$ and $\{6y,6y+6,\ldots, 6y+30\}$ of residues modulo 37 intersect in at most
  one element. Then $A$ says the color from the intersection (or says an
  arbitrary color if the intersection is empty). Then for any $S_1$ and $S_2$,
  either $A$ guesses both sums, or someone on the right or on the left clique
  guesses.
\end{proof}

It can analogously be proved that the sages win on graph ``Medium bow'',
fig.~\ref{fig:mid-bow}. This fact disproves the hypotheses 4 and 6 from
\cite{bosek19_hat_chrom_number_graph}.

\subsection{Substitution}

The following constructor removes a vertex of a graph $G_1$ and put a graph $G_2$ on
its place.

\begin{definition}
  Let $G_1$ and $G_2$ be two graphs without common vertices. A
  \emph{substitution of graph~$G_2$ to graph~$G_1$ in place of vertex~$v$}
  is defined to be the graph $(G_1\setminus\{v\})\cup G_2$  with additional edges
  connecting each vertex of $G_2$ with each neighbor of~$v$, see
  fig.~\ref{fig:subs}. We denote the substitution by $G_1[v:=G_2]$.
\end{definition}

\begin{figure}[h]
\footnotesize
\setlength{\unitlength}{300bp}%
\begin{center}
  \begin{picture}(1,0.1290902)%
    \put(0,0){\includegraphics[width=\unitlength,page=1]{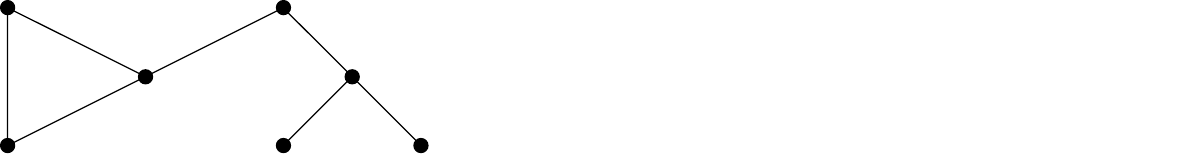}}%
    \put(0.11107517,0.03557355){\makebox(0,0)[lb]{\smash{$v$}}}%
    \put(0.48261333,0.07033853){\makebox(0,0)[lb]{\smash{$\longrightarrow$}}}%
    \put(0.45943632,0.02398464){\makebox(0,0)[lb]{\smash{$v:=$}}}%
    \put(0,0){\includegraphics[width=\unitlength,page=2]{subs.pdf}}%
  \end{picture}
\end{center}
  \caption{A substitution.}\label{fig:subs}
\end{figure}

\begin{theorem}\label{thm:substitution}
  Let the sages win in games $\HG_1=\langle {G_1, h_1} \rangle$ and $\HG_2=\langle
  {G_2, h_2} \rangle$. Let $G$ be the graph of the substitution $G_1[v:=G_2]$, where
  $v\in G_1$ is an arbitrary vertex. Then the game $\HG=\langle{G, h} \rangle$ is
  winning, where
  \[
    h(u) = \begin{cases}h_1(u)&u\in G_1\\ h_2(u)\cdot h_1(v)&u\in G_2\end{cases}
  \]
\end{theorem}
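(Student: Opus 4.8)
The natural approach is to combine the two given winning strategies just as in the proof of Theorem~\ref{thm:multiplication}: the sages of $G_1\setminus\{v\}$ should play their $\HG_1$-strategy, the sages of $G_2$ should play their $\HG_2$-strategy, and the only thing to arrange is that each group sees the ``right'' information. The key observation is that $h(u)=h_2(u)\cdot h_1(v)$ for $u\in G_2$, so I will regard the hat color of a vertex $u\in G_2$ as a \emph{composite color} $(c_1(u), c_2(u))$ with $0\le c_1(u)\le h_1(v)-1$ and $0\le c_2(u)\le h_2(u)-1$, exactly as in the Product definition. The plan is then: first use the $c_1$-components inside $G_2$ to simulate a single super-vertex of hatness $h_1(v)$, and second use the $c_2$-components to run the internal game $\HG_2$.

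More precisely, here are the steps in order. (1) Fix winning strategies for $\HG_1$ and $\HG_2$. (2) Define the ``first coordinate'' of the block $G_2$ to be a function of the whole arrangement $\varphi$; the simplest choice that makes the simulation work is to let \emph{every} vertex $u\in G_2$ carry the same relevant first coordinate — i.e. think of the arrangement on $G_2$ as giving one value in $\ZZ_{h_1(v)}$ that all neighbors-of-$v$ in $G_1$ will read off. Concretely: a neighbor $w$ of $v$ in $G_1$ is, in $G$, adjacent to every vertex of $G_2$, so $w$ can see all of $G_2$; let $w$ extract the number $c_1:=c_1(u_0)$ for a fixed designated vertex $u_0\in G_2$ (or, if one prefers a symmetric description, the sum $\sum_{u\in G_2}c_1(u)\bmod h_1(v)$), and let $w$ play its $\HG_1$-strategy pretending $v$ has color $c_1$. (3) The sages of $G_2$, to play $\HG_2$, need to see the colors of the old neighbors of $v$, which they do (those edges were added), and they need one extra ``input'' standing for what $v$'s hatness-$h_1(v)$ role contributes; this is where the $c_1$-components feed \emph{into} $G_2$: designate that the sages of $G_2$ treat $c_1(u_0)$ (resp. the chosen sum) as an external parameter and, collectively, arrange their $c_1$-guessing so that, reading the arrangement inside $G_1\setminus\{v\}$ (which they can all see), they can compute what color the $\HG_1$-strategy would have $v$ say. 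The cleanest way: have one vertex $u_0\in G_2$ be responsible for guessing $c_1(u_0)$ to equal the value $f_v(\text{arrangement on }N_{G_1}(v))$ prescribed by the $\HG_1$-strategy for $v$; then $u_0$ uses its $c_2(u_0)$-component to play its role in $\HG_2$, and all other vertices of $G_2$ use their full hat (both components) purely for $\HG_2$, which is legitimate since $h_2(u)\mid h(u)$.

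The verification of correctness then splits into cases, mirroring Theorem~\ref{thm:multiplication}. Consider any arrangement $\varphi$ on $G$. If the induced arrangement on $G_1$ (with $v$ assigned the simulated color $c_1$) is one for which someone in $G_1\setminus\{v\}$ guesses correctly under the $\HG_1$-strategy, we are done, since those sages see exactly what they would see in $\HG_1$. If instead $v$ itself would be the correct guesser in $\HG_1$ — i.e. $c_1$ equals the value prescribed for $v$ — then by construction the designated vertex $u_0\in G_2$ guessed its $c_1$-component correctly; but that is only half of $u_0$'s hat, so we still need someone in $G_2$ to be correct on the $c_2$-side. Here I invoke the winning strategy of $\HG_2$: the sages of $G_2$ are playing $\HG_2$ on the $c_2$-components against the arrangement $\varphi|_{G_2}$ projected to second coordinates, so someone in $G_2$ guesses $c_2$ correctly; if that someone is $u_0$, then $u_0$ got both components right and hence its whole color; if it is some other $u\ne u_0$, that $u$ is using its full hat for $\HG_2$ and so it is literally correct. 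In every case at least one sage of $G$ guesses correctly.

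The main obstacle is the bookkeeping around the shared/simulated first coordinate: one must make sure (a) that the neighbors of $v$ in $G_1$ all read off a \emph{consistent} value $c_1$ purely from the hats on $G_2$ (so they really simulate a single coherent super-vertex), and (b) that the sages inside $G_2$ can, from what they see, determine \emph{which} color the $\HG_1$-strategy assigns to $v$, so that $u_0$ can aim for it — this requires that every vertex of $G_2$ sees all of $N_{G_1}(v)$, which holds by the substitution's added edges, and that the sages of $G_1\setminus\{v\}$ adjacent to $v$ form exactly $N_{G_1}(v)$, whose hats are visible to $G_2$. Once these visibility facts are spelled out, the argument is a routine case analysis; I expect no genuine difficulty beyond getting the quantifiers in the right order.
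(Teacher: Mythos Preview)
Your overall plan is the right one and matches the paper's: composite colors $(c_1(u),c_2(u))$ for $u\in G_2$, run $f_2$ on the second components, and have the $G_2$-block simulate $v$ via first components. The gap is in the execution, at the sentence ``if it is some other $u\ne u_0$, that $u$ is using its full hat for $\HG_2$ and so it is literally correct.'' Every $u\in G_2$ has hatness $h_2(u)\cdot h_1(v)$, so to be correct $u$ must guess \emph{both} coordinates; the $\HG_2$-strategy only produces a guess for $c_2(u)$, and your scheme never tells $u\ne u_0$ what to say for $c_1(u)$ --- and no choice can work with a statically fixed $u_0$. Concretely, take $G_1=P_2$ on $\{v,a\}$ and $G_2=P_2$ on $\{u_0,u_1\}$, all hatnesses~$2$, with the standard strategies. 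The adversary sets $c_1(u_0)=f_1(v)\bigl(c(a)\bigr)$, so that in the simulated $\HG_1$-game $v$ is the winner and $a$ is wrong; then chooses $c_2(u_0),c_2(u_1)$ so that the $\HG_2$-winner on second components is $u_1$ rather than $u_0$; and finally sets $c_1(u_1)$ to differ from whatever $u_1$ guessed for its first component. Nobody is correct. Your alternative of reading $\sum_{u} c_1(u)\bmod h_1(v)$ has the same defect: a sage $u\in G_2$ in general does not see the first components of all other vertices of $G_2$ (only of its $G_2$-neighbors), so it cannot reconstruct its own $c_1(u)$ from the sum.

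The idea you are missing --- and the single point where the paper's argument departs from yours --- is that the former $G_1$-neighbors of $v$ should not read $c_1(u_0)$ for a fixed $u_0$. They are adjacent to all of $G_2$, hence they can first determine which $w\in G_2$ guesses its second coordinate correctly under $f_2$ (they see every hat in $G_2$ and know $f_2$), and then take the first coordinate $c_1(w)$ of \emph{that} $w$ as the simulated color of $v$. Meanwhile \emph{every} $u\in G_2$ guesses $f_1(v)\bigl(\text{colors on }N_{G_1}(v)\bigr)$ as its first coordinate (they all say the same value, since they all see $N_{G_1}(v)$). Now if nobody in $G_1\setminus\{v\}$ is correct, then $v$ is the $\HG_1$-winner, i.e.\ $f_1(v)(\dots)=c_1(w)$, so $w$ gets its first coordinate right; and $w$ was chosen precisely so that it also gets its second coordinate right. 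The dynamic selection of $w$ is exactly what closes the gap.
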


\begin{proof}
  Let $f_1$ and $f_2$ be winning strategies in the games $\HG_1$ and $\HG_2$,
  respectively.

  Let each sage $u$ of the subgraph~$G_2$ of $G$ get a hat of composite
  color~$(c_1, c_2)$, where $0\leq c_1\leq h_1(v)-1$, $0\leq c_2\leq h_2(u)-1$.
  These sages can calculate the coordinates of their composite colors independently:
  sage $v$ finds colors $c_1$ and $c_2$ by using strategies~$f_1(v)$ and~$f_2(v)$,  
  respectively. In particular, this means that all sages of~$G_2$
  say composite colors with the same first component.

  Those of the other sages of~$G$, who are not the neighbors
  of~$v$, play in accordance with the strategy~$f_1$. After the substitution the sages of~$G_1$, who are neighbors of~$v$, see that instead of one
  neighbor~$v$ they have now $|V_2|$ neighbors (and, generally speaking, with
  different hat colors). These sages do as follows. They see all the
  hats of the sages of~$G_2$ and know their strategies.
  Therefore, they understand, which of the sages of~$G_2$ 
  guesses the second coordinate of his color. Denote this player
  by~$w$ (if there are several winners, then they choose, for example,
  the first winner in the pre-compiled list).
  Then each former neighbor of~$v$ looks only at~$w$, more
  precisely, at the first component of $w$'s color, and plays
  in accordance with the strategy~$f_1$.

  As a result, either someone from subgraph $G_1 \setminus\{v\}$  guesses
  correctly, or $w$ guesses both components of his color correctly.
\end{proof}

\begin{corollary}\label{cor:triangle244-win}
  The sages win in the games shown in fig.~\ref{fig:cor:triangle244-win}.
\end{corollary}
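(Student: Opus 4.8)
The plan is to realize each game depicted in Figure~\ref{fig:cor:triangle244-win} as a substitution $G_1[v:=G_2]$ of two smaller games on which the sages already win, and then to quote Theorem~\ref{thm:substitution}. The building blocks I expect to need are the one–edge game $\langle K_2,2\rangle$ and the triangle game $\langle K_3,3\rangle$; both are winning by Theorem~\ref{thm:clique-win}, since $\tfrac12+\tfrac12=1$ and $\tfrac13+\tfrac13+\tfrac13=1$.

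Consider first the triangle with hatnesses $2$, $4$, $4$. Take $G_1$ to be the edge $vx$ with $h_1(v)=h_1(x)=2$, and $G_2$ to be the edge $pq$ with $h_2(p)=h_2(q)=2$; both games are winning. Form the substitution $G_1[v:=G_2]$: the vertex $v$ is deleted, $p$ and $q$ are each joined to the unique neighbour $x$ of $v$, and the edge $pq$ of $G_2$ is retained, so the resulting graph is exactly the triangle on $\{x,p,q\}$. By Theorem~\ref{thm:substitution} the new hat function is $h(x)=h_1(x)=2$ and $h(p)=h(q)=h_2(p)\cdot h_1(v)=4$, and the sages win. Any remaining game in the figure is treated the same way — substituting a copy of $\langle K_2,2\rangle$ or of $\langle K_3,3\rangle$ for a suitably chosen vertex of a small winning (complete‑graph or path) game — and in each case Theorem~\ref{thm:substitution} produces a winning strategy with the hatnesses shown.

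The only point requiring care is the routine bookkeeping: after each substitution one checks that the added edges (joining every vertex of $G_2$ to every former neighbour of $v$), together with the edges of $G_1\setminus\{v\}$ and of $G_2$, reproduce precisely the graph drawn, and that the rule $h(u)=h_2(u)\cdot h_1(v)$ on $V(G_2)$ yields the stated hatnesses. I do not expect any genuine obstacle here, since Theorem~\ref{thm:substitution} supplies the winning strategy directly; the corollary mainly serves to illustrate the substitution constructor on a minimal example. (One may also observe that the triangle game just described already satisfies inequality~\eqref{eq:clique-win}, so it is in addition covered by Theorem~\ref{thm:clique-win}.)
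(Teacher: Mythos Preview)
Your treatment of the triangle $(2,4,4)$ is fine and coincides with the paper's construction in the case $n=2$. The difficulty is with the remaining graphs in Figure~\ref{fig:cor:triangle244-win}: these are \emph{fans} --- a vertex of hatness~$2$ joined to every vertex of a path whose hatnesses are $4,8,8,\ldots,8,4$. Your proposed building blocks $\langle K_2,2\rangle$ and $\langle K_3,3\rangle$ cannot produce them. Substituting $\langle K_3,3\rangle$ into a vertex of $\langle K_2,2\rangle$ yields a $K_4$-like graph with hatnesses $2,6,6,6$, not the fan with $2,4,8,4$; and no iterated substitution of edges will give you a path at the bottom (each substitution step connects \emph{all} new vertices to \emph{all} former neighbours, so you keep producing cliques, not paths).

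The missing ingredient is Corollary~\ref{cor:path2442}: the path game $\HG_2=\langle P_n(AB),\,h_4^{A2B2}\rangle$ is winning. The paper takes $\HG_1=\langle P_2,2\rangle$ and substitutes this entire path for one endpoint $v$ of $\HG_1$. Then the surviving vertex keeps hatness $2$, every path vertex is joined to it, and the path hatnesses get multiplied by $h_1(v)=2$, giving exactly $4,8,\ldots,8,4$. So the ``routine bookkeeping'' you defer is not routine: you need to recognise the path $P_n$ with hatnesses $2,4,\ldots,4,2$ --- not a complete graph --- as the correct $G_2$.
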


\begin{proof}
 We apply theorem~\ref{thm:substitution} to games $\HG_1=\langle P_2, 2
  \rangle$ and $\HG_2=\langle P_n(AB), h_4^{A2B2}\rangle$.
\end{proof}

Note, that the win of the sages of the first graph
(fig.~\ref{fig:cor:triangle244-win}) also follows from
theorem~\ref{thm:clique-win}.

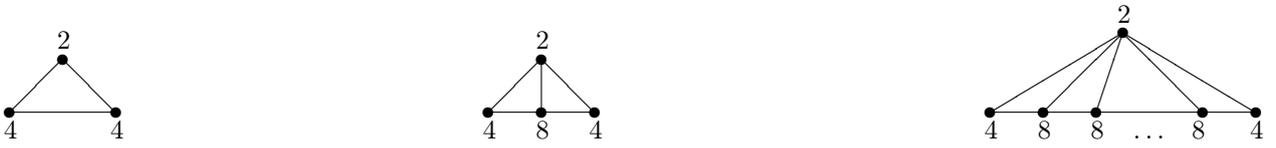
\begin{figure}[h]%
\setlength{\unitlength}{.7mm}\footnotesize

\begin{minipage}[t]{0.15\linewidth}
\begin{center}
\begin{picture}(20,20)(0,-5)
\put(0,0){\circle*{2}}\put(10,10){\circle*{2}}\put(20,0){\circle*{2}}
\put(0,0){\line(1,0){20}}\put(0,0){\line(1,1){10}}\put(10,10){\line(1,-1){10}}
\put(-1,-5){4} \put(19,-5){4} \put(9,12){2}
\end{picture}
\end{center}
\end{minipage}
\hfill
\begin{minipage}[t]{0.2\linewidth}
\begin{center}
\begin{picture}(20,20)(0,-5)
\put(0,0){\circle*{2}}\put(10,0){\circle*{2}}\put(20,0){\circle*{2}}
\put(10,10){\circle*{2}}
\put(0,0){\line(1,1){10}}\put(10,0){\line(0,1){10}}\put(20,0){\line(-1,1){10}}
\put(0,0){\line(1,0){20}}
\put(-1,-5){4} \put(19,-5){4} \put(9,-5){8}
\put(9,12){2}
\end{picture}
\end{center}
\end{minipage}
\hfill
\begin{minipage}[t]{0.3\linewidth}
\begin{center}
\begin{picture}(50,22)(0,-5)
\put(0,0){\circle*{2}}\put(10,0){\circle*{2}}\put(20,0){\circle*{2}}
\put(40,0){\circle*{2}}\put(50,0){\circle*{2}}
\put(25,15){\circle*{2}}
\put(0,0){\line(1,0){50}}
\put(0,0){\line(5,3){25}}\put(10,0){\line(1,1){15}}
\put(20,0){\line(1,3){5}}\put(40,0){\line(-1,1){15}}
\put(50,0){\line(-5,3){25}}
\put(-1,-5){4} \put(9,-5){8} \put(19,-5){8} \put(27,-5){\dots}
\put(38,-5){8} \put(49,-5){4} \put(24,17){2}
\end{picture}
\end{center}
\end{minipage}
\caption{Substitution of game $\langle P_n(AB), h_4^{A2B2}\rangle$ into game $\langle P_2, 2\rangle$.}
\label{fig:cor:triangle244-win}
\end{figure}

\subsection{Vertices attaching}

The following theorems-constructors allow to obtain new winning or losing
graphs by attaching one or two new vertices to the existing graph.

\subsubsection{Attaching a vertex of hatness 2}

\begin{theorem}\label{thm:addA2}
  Let $\langle {G, h} \rangle$ be a winning game and $B, C\in V(G)$. Then the
  sages win in the game $\langle {G', h'} \rangle$, where $G'$ is the graph obtained
  from $G$ by adding a vertex $A$ and edges $AB$, $AC$
  (fig.~\ref{fig:addA2}), and hat function is given by formula
  \[
    h'(v)=
    \begin{cases}
      2,       &\text{if}\ v = A,\\
      h(v) + 1,&\text{if}\ v = B\ \text{or}\ C,\\
      h(v),    &\text{else}.
    \end{cases}
  \]
\end{theorem}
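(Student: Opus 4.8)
The plan is to reduce the new game $\langle G', h'\rangle$ to the original winning game $\langle G, h\rangle$ by having sage $A$ (who has only $2$ colors) effectively pass one bit of ``extra capacity'' to both $B$ and $C$. The natural idea: sage $A$ announces a guess that splits the hat arrangements on $\{B,C\}$ into two classes; in each class, the sages $B$ and $C$ behave as if they had only $h(B)$ and $h(C)$ colors, so the rest of the graph can run its winning strategy on $\langle G, h\rangle$.

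Concretely, I would identify the colors of $B$ as $\ZZ/(h(B)+1)$ and similarly for $C$, and let sage $A$ look at $B$ and $C$ and say color $\chi\in\{0,1\}$ according to some rule depending on whether the colors of $B$ and $C$ lie in certain ``forbidden'' positions. The key construction is this: we want, for each value $\chi$ of $A$'s guess, to have a ``reduced'' game on $G$ in which $B$ is restricted to $h(B)$ of its $h(B)+1$ colors and $C$ to $h(C)$ of its $h(C)+1$ colors — but the restriction must depend on the other sage so that whenever $A$ guesses wrong, the pair $(b,c)$ does fall in the allowed region for the winning strategy of $\langle G,h\rangle$. The cleanest way: fix the winning strategy $f$ on $\langle G,h\rangle$. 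Let sage $A$ say $\chi = [\,b = h(B)\ \text{or}\ c = h(C)\,]$ interpreted as a $\{0,1\}$ value — i.e. $A$ guesses $1$ precisely when at least one of $B,C$ wears its ``top'' color. If $A$ is wrong, then $\chi$ was the opposite; in the case $A$ said $0$ and is wrong, at least one of $B,C$ wears the top color and $A$'s guess $0$ does not match the real color of $A$, which is fine — but then I need $B$ and $C$ still to be able to simulate. This first attempt is not quite symmetric, so I would instead organize it as in the paper's earlier ``pushing a hint'' lemma.

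So the main step is: build, for each $\chi\in\{0,1\}$, an injection from ``$(b,c)$ with $A$-guess $\ne$ real $A$-color'' into the color set $\{0,\dots,h(B)-1\}\times\{0,\dots,h(C)-1\}$ that sages $B$ and $C$ can each compute from their own observed data (i.e. $B$ computes his reduced color from $b$ and $\chi$ and whatever he sees of $C$ — but $B$ does see $C$ if $BC\in E$; in general $B$ may not see $C$). This is the subtlety that forces the actual argument: $B$ and $C$ need not be adjacent, so they cannot coordinate through mutual observation. The resolution, which I expect the paper uses, is that $A$'s single bit is exactly the shared coordination: $A$ sees both $B$ and $C$, so $A$'s announced bit $\chi$ is common knowledge to everyone who sees $A$ — in particular to $B$ and to $C$. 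Hence $B$ reads off $\chi$ from $A$'s hat (sage $B$ sees $A$ since $AB\in E'$), likewise $C$; and then $B$ uses $(\chi, b)$ to produce a color in $\ZZ/h(B)$, $C$ uses $(\chi,c)$ to produce a color in $\ZZ/h(C)$, in such a way that when $A$ is wrong these reduced colors, together with the true colors elsewhere, form a legitimate arrangement of $\langle G,h\rangle$ on which $f$ makes someone guess right — and that someone's correct reduced guess translates back to a correct guess of the true color in $G'$.

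The concrete encoding I would try: let $A$ say $\chi=0$ if $b\le h(B)-1$ and $c\le h(C)-1$ (both ``small''), i.e. $A$ detects whether some top color is present. Wait — then when both are small, $A$'s guess is $0$ and is correct iff $A$'s true color is $0$; so $A$ can be wrong there. Instead I would make $A$'s rule genuinely ``cover'' its two colors: partition the $(b,c)$ grid of size $(h(B)+1)(h(C)+1)$ into two parts $P_0, P_1$ with the property that, within each $P_\chi$, one can retract $B$'s coordinate onto $\ZZ/h(B)$ and $C$'s onto $\ZZ/h(C)$ consistently; then $A$ says the $\chi$ with $(b,c)\in P_\chi$. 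Whichever color $A$ should have said is then determined, and the only way everyone can fail is if $A$ is wrong, i.e. the true $A$-color is the \emph{other} bit $1-\chi$; but $A$ only ever says $\chi$ with $(b,c)\in P_\chi$, so if $A$ fails then the real arrangement has $A$-color $1-\chi$ while $(b,c)\in P_\chi$ — and I arrange $P_{1-\chi}$ to be exactly the ``retract-able'' region for that branch. The hard part will be exhibiting this partition explicitly: the count is $(h(B)+1)(h(C)+1)$ cells that must be covered by two overlapping $h(B)\times h(C)$-shaped ``retractable'' regions, and one checks $2\,h(B)\,h(C)\ge (h(B)+1)(h(C)+1)$ iff $(h(B)-1)(h(C)-1)\ge 2$, which holds once $h(B),h(C)\ge 2$ except in the degenerate case $h(B)=h(C)=2$ — that boundary case (giving $h'(B)=h'(C)=3$, $h'(A)=2$, all cells $3\times3=9 = 2\cdot2+\dots$, doesn't fit) will need separate handling or shows the encoding must be finer than a plain product retract. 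I would therefore expect the real proof to route $B$ and $C$ through the mod-arithmetic trick of Proof 2 of Theorem~\ref{thm:clique-win}: work modulo $N=\mathrm{lcm}$ of the relevant hatnesses, let $A$'s bit select one of two length-$\tfrac N2$-ish intervals, and let $B$, $C$ each check a hypothesis on a running sum, so that the two bits of slack created at $B$ and $C$ are ``the same bit'' seen through $A$. That is the step I would spend the most care on.
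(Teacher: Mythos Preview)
You actually wrote down the paper's exact rule for $A$ --- ``$A$ says $1$ if at least one of $B,C$ wears its top color, else $0$'' --- and then talked yourself out of it. The remark ``when both are small, $A$'s guess is $0$ and is correct iff $A$'s true color is $0$; so $A$ can be wrong there'' is true but not an objection: of course $A$ can be wrong; the point is that in every case where $A$ is wrong, \emph{someone else} is right. What you are missing is the matching strategy for $B$ and $C$, and it is much simpler than the partition/retract/LCM machinery you start building.

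Here is the missing piece. Call the extra color at $B$ (resp.\ $C$) the ``new'' color. Sage $A$ plays as above. Sages $B$ and $C$ each look at $A$'s hat: if they see color $0$ on $A$, they say the new color; if they see color $1$ on $A$, they play their strategy from the original winning game $\langle G,h\rangle$. Every other sage plays the original strategy (with any arbitrary convention if a neighbor shows the new color --- it will never matter). Now check the four cases. If $A$'s true color is $0$: $B$ and $C$ both say ``new''; if either actually wears new, that one wins; if neither does, $A$ sees no new colors, says $0$, and wins. If $A$'s true color is $1$: $B$ and $C$ play the original strategy; if either of them wears new, $A$ sees it, says $1$, and wins; if neither wears new, the entire arrangement on $G$ is a legal $\langle G,h\rangle$-arrangement, every sage is running the original winning strategy, and someone guesses correctly. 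Done.

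Notice this sidesteps exactly the difficulty you flagged: $B$ and $C$ need not see each other, and they do \emph{not} need to compute any ``reduced color'' or retract. They only need the one bit on $A$'s head, which each of them sees directly. Your counting argument about $2\,h(B)\,h(C)$ versus $(h(B)+1)(h(C)+1)$ and the worry about the boundary case $h(B)=h(C)=2$ are symptoms of framing the problem as ``cover the $(b,c)$-grid by two retractable rectangles,'' which is the wrong picture; the paper's argument never needs such a covering, and works uniformly for all $h(B),h(C)\ge 1$.
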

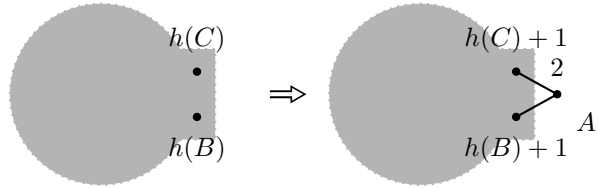
\begin{figure}[h]
  \centering\footnotesize
  \begin{tikzpicture}[scale=.6]
    \draw [fill=green!10!,lgreen,dotted,thick] (0,0) circle (2);
    \draw [fill=green!10!,lgreen,dotted,thick] (1,-1) rectangle (2.5, 1);
    \draw [fill=black] (2.1, -0.5) circle (0.08) node [label={below:{$h(B)$}}] {};
    \draw [fill=black] (2.1, 0.5) circle (0.08) node [label={above:{$h(C)$}}] {};
    \draw [vecArrow] (3.7, 0) to (4.5, 0);

    \draw [fill=green!10!,lgreen,dotted,thick] (7,0) circle (2);
    \draw [fill=green!10!,lgreen,dotted,thick] (7,-1) rectangle (9.5, 1);
    \draw [fill=black] (9.1, -0.5) circle (0.08) node [label={below:{$h(B)+1$}}] {};
    \draw [fill=black] (9.1, 0.5) circle (0.08) node [label={above:{$h(C)+1$}}] {};
    \draw [fill=black] (10, 0) circle (0.08) node [label=above:2, label=-45:$A$] {} ;
    \draw [thick] (9.1, -0.5) -- (10, 0);
    \draw [thick] (9.1, 0.5) -- (10, 0);
  \end{tikzpicture}
  \caption{Attaching a vertex of hatness $2$.}
  \label{fig:addA2}
\end{figure}

\begin{proof}
  Let us describe a winning strategy. After attaching of new vertex sages $B$ and $C$
  have one new possible color. Let sage~$A$ say ``1'' if he sees at least one
  hat of the new color on sages~$B$ and $C$, otherwise $A$
  says ``0''. If sages~$B$ and $C$ see a hat of color 0 on~$A$, then let they both
  say the new color. Thus, if $A$'s color is 0, then one of the sages $A$, $B$
  and $C$ win. If $A$'s color is 1, then $B$ and $C$ may think that
  have not a new color, and therefore can
  play by their strategies of the game~$\langle {G, h} \rangle$.
\end{proof}

\begin{corollary}\label{cor:cycle323}
  Let $G$ be cycle $C_n$ \ $(n \geq 4)$, and let $B$, $A$ and $C$ be
  three consequent vertices of the cycle. Then the sages win in the game
  $\langle {G, h_4^{B3A2C3}} \rangle$.
\end{corollary}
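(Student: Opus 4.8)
The plan is to obtain the game $\langle C_n, h_4^{B3A2C3}\rangle$ by a single application of the constructor of Theorem~\ref{thm:addA2} to a suitable winning path game. First I would delete the vertex $A$ together with its two incident edges $AB$ and $AC$ from the cycle $C_n$. Since $A$ has exactly two neighbours in $C_n$, namely $B$ and $C$, what remains is the path on the other $n-1$ vertices joining $B$ to $C$; call it $P_{n-1}(BC)$. (When $n=4$ this is the three-vertex path $B$--$D$--$C$; in general it has $n-1\ge 3$ vertices, so it is a genuine path with distinct ends $B$ and $C$.)

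Next I would equip this path with the hat function $h_4^{B2C2}$, i.e.\ hatness $2$ at the two ends $B$ and $C$ and hatness $4$ at every interior vertex. By Corollary~\ref{cor:path2442} the sages win the game $\langle P_{n-1}(BC), h_4^{B2C2}\rangle$.

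Finally I would apply Theorem~\ref{thm:addA2} with $G=P_{n-1}(BC)$, $h=h_4^{B2C2}$, and the two distinguished vertices taken to be the ends $B$ and $C$. The theorem attaches a new vertex $A$ joined to $B$ and $C$, sets $h'(A)=2$, raises the hatnesses of $B$ and $C$ each by one (from $2$ to $3$), and leaves all other hatnesses unchanged (at $4$). The resulting visibility graph is exactly $C_n$ with $B$, $A$, $C$ three consecutive vertices, and the resulting hat function is exactly $h_4^{B3A2C3}$; the theorem guarantees that the sages win.

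Since both ingredients, Corollary~\ref{cor:path2442} and Theorem~\ref{thm:addA2}, are already established, there is no real obstacle here. The only step requiring a moment's care is the bookkeeping: checking that deleting $A$ from $C_n$ really produces a path whose ends are precisely $B$ and $C$ (so that Corollary~\ref{cor:path2442} applies verbatim, including the boundary case $n=4$), and that re-attaching $A$ via Theorem~\ref{thm:addA2} reproduces exactly the cycle together with the claimed hat function.
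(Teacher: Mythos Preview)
Your proof is correct and follows exactly the same approach as the paper: apply Corollary~\ref{cor:path2442} to the path $P_{n-1}(BC)$ with hatnesses $2,4,\dots,4,2$, then attach the vertex $A$ via Theorem~\ref{thm:addA2}. Your write-up is in fact more detailed than the paper's two-line version, spelling out the bookkeeping that the paper leaves implicit.
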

\begin{proof}
  By corollary~\ref{cor:path2442} the sages win on graph $P_{n-1}(CB)$ with
  hatnesses $2,4,\dots, 4,2$. Attaching vertex $A$ to this graph  gives a winning graph by theorem~\ref{thm:addA2}.
\end{proof}

This corollary strengthens lemma ``on the hint $A-1$ for cycle'' \cite[lemma
9]{Kokhas2018} without any technical calculations.

The following constructor shows that if the vertices
$B$ and $C$ in theorem~\ref{thm:addA2} are adjacent, then the numbers of colors for these vertices can greatly be increased.

\begin{theorem}\label{thm:addA2to_edge}
  Let $\langle G,h\rangle$ be a winning game, and let $BC$ be an edge of the
  graph~$G$. Consider a graph $G'=\left\langle V', E' \right\rangle$ obtained by
  adding a new vertex~$A$ and two new edges to graph~$G$: $V'=V\cup \{A\}$, $E'=E\cup
  \{AB, AC\}$. Then the sages win in the game $\langle G',h'\rangle$ (see
  fig.~\ref{fig:addA2_with_edge}), where
  \[
    h'(v) = \begin{cases}
      2,&\text{if}\ v = A\\
      2h(v),&\text{if}\ v = B \text{ or } v = C\\
      h(v),&\text{otherwise}
    \end{cases}
  \]
\end{theorem}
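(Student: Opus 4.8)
The plan is to imitate the strategy used in Theorem~\ref{thm:addA2}, but now exploiting the edge $BC$ to let each of $B$ and $C$ carry a \emph{composite} color. Interpret the hat color of $B$ as a pair $(b_0, b_1)$ with $b_0 \in \{0,1\}$ and $b_1 \in \{0,\dots,h(B)-1\}$, and likewise the color of $C$ as $(c_0, c_1)$. The component $b_1$ (resp.\ $c_1$) will be used to play the winning strategy of $\langle G, h\rangle$, while $b_0, c_0$ form a ``two-bit'' auxiliary coordinate that, together with the hatness-$2$ vertex $A$, will be analyzed exactly as in the three-vertex core argument of Theorem~\ref{thm:addA2}. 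The key new point is that since $B$ and $C$ see each other, $C$ knows $b_0$ and $B$ knows $c_0$.

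First I would set up the strategy of $A$: sage $A$ sees the bits $b_0$ and $c_0$ on his two neighbors and says $A \mapsto 1$ if $b_0 = c_0$ and $A \mapsto 0$ if $b_0 \ne c_0$ (any fixed rule distinguishing the two ``diagonal'' cases will do; I will tune it below). Next, the strategy of $B$: sage $B$ sees $A$'s bit and sees $c_0$. In the branch where this information tells $B$ that the auxiliary part is ``resolved by $B$ or $C$'', $B$ commits his auxiliary bit $b_0$ to the value that makes the pair $(b_0, c_0, A)$ a correct guess for $B$ or $C$; otherwise $B$ is free and uses $b_1$ to play the winning strategy $f$ of $\langle G,h\rangle$ (and symmetrically for $C$, who sees $A$'s bit and $b_0$). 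The point is that $B$ and $C$ must both know, from what they see, whether they are in the ``free'' branch, so that in that branch \emph{both} of them are playing $f$ and hence — since $f$ is winning on $G$ — someone in $G$ guesses the $h(\cdot)$-component correctly.

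Concretely, here is the bookkeeping I expect to carry out. There are four possible values of $(b_0, c_0) \in \{0,1\}^2$. Arrange $A$'s rule and the $B,C$ commitments so that: (i) when $A$'s actual color is $0$, the pair $(b_0,c_0)$ is forced by $B$'s and $C$'s commitments into a configuration where $A$ guesses $0$ correctly or one of $B, C$ guesses his whole composite color; (ii) when $A$'s actual color is $1$, sages $B$ and $C$ can both detect that they are in the free branch (from $A$'s bit $1$ together with the partner's bit) and then both play $f$. This is the same logical skeleton as in Theorem~\ref{thm:addA2}: the hatness-$2$ vertex $A$ ``absorbs'' one case, and in the complementary case $B$ and $C$ revert to the old game. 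Doubling of $h(B), h(C)$ is exactly what is needed to stash the extra bit without losing the ability to replay $f$. Finally I would verify that the claimed $h'$ matches: $h'(A) = 2$, $h'(B) = 2h(B)$, $h'(C) = 2h(C)$, and $h'(v) = h(v)$ elsewhere, and that the described maps are well-defined functions of what each sage sees.

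The main obstacle is organizing the four-way case analysis on $(b_0, c_0)$ so that $B$ and $C$ \emph{independently and consistently} decide whether to commit their auxiliary bit or to play freely, using only the information each one actually sees ($A$'s bit plus the partner's auxiliary bit), while still guaranteeing that whenever $A$ is wrong, either someone in $G$ wins via $f$ or $B$ or $C$ wins on the composite color. I expect that one clean choice is: $C$ always commits $c_0 := (\text{$A$'s bit}) \oplus b_0 \oplus 1$ unless that is impossible to verify, and dually for $B$; then checking the two cases $A=0$ and $A=1$ reduces to a short parity computation. I would write out that parity check explicitly and confirm it closes the argument, after which the ``free branch'' invocation of the winning strategy on $\langle G,h\rangle$ finishes the proof.
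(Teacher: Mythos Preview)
Your setup with composite colors $(b_0,b_1)$, $(c_0,c_1)$ is exactly right, and the parity formula you land on at the end, $c_0 := a \oplus b_0 \oplus 1$ (and dually for $B$), is precisely what the paper uses. But the branching architecture you import from Theorem~\ref{thm:addA2} is both unnecessary and the source of a genuine gap.

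The gap is this: in your ``commitment branch'' (your case (i)), you say $B$ commits his auxiliary bit $b_0$ so that ``one of $B,C$ guesses his whole composite color.'' But guessing the whole composite color means guessing \emph{both} $b_0$ and $b_1$; committing $b_0$ says nothing about $b_1$. You only have $B$ play $f$ for $b_1$ in the ``free branch,'' so in the commitment branch there is no mechanism that makes $b_1$ correct. And nobody else in $G$ is playing $f$ differently across branches, so you cannot appeal to a winner elsewhere either.

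The paper's proof sidesteps this by having $B$ and $C$ play \emph{both} components in \emph{every} case, with no branching at all. Sage $A$ guesses $\epsilon_B+\epsilon_C\pmod 2$; sages $B$ and $C$ always compute their $G$-component via the winning strategy $f$ (looking only at $G$-components of neighbors), and always set their bit to $a+\epsilon_{\text{partner}}+1\pmod 2$. Then if $A$ is wrong, a one-line parity check shows both $B$ and $C$ have their bit correct; meanwhile $f$ is running on all of $G$ unconditionally, so someone guesses the $G$-component, and if that someone is $B$ or $C$ they already have the bit too. The whole proof is five lines. Your proposal would converge to this once you drop the case split and let $B,C$ run both sub-strategies simultaneously.
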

\begin{figure}[h]
  \centering\footnotesize
  \begin{tikzpicture}[scale=.65]
    \draw [fill=green!10!,lgreen,dotted,thick] (0,0) circle (2);
    \draw [fill=green!10!,lgreen,dotted,thick] (1,-1) rectangle (2.5, 1);
    \draw [fill=black] (2.1, -0.5) circle (0.08) node [label={below:{$h(B)$}}] {};
    \draw [fill=black] (2.1, 0.5) circle (0.08) node [label={above:{$h(C)$}}] {};

    \draw [thick] (2.1, 0.5) -- (2.1, -0.5);

    \draw [vecArrow] (3.7, 0) to (4.5, 0);

    \draw [fill=green!10!,lgreen,dotted,thick] (7,0) circle (2);
    \draw [fill=green!10!,lgreen,dotted,thick] (7,-1) rectangle (9.5, 1);
    \draw [fill=black] (9.1, -0.5) circle (0.08) node [label={below:{$h(B)\times 2$}}] {};
    \draw [fill=black] (9.1, 0.5) circle (0.08) node [label={above:{$h(C)\times 2$}}] {};
    \draw [fill=black] (10, 0) circle (0.08) node [label=above:2, label=-45:$A$] {} ;
    \draw [thick] (9.1, -0.5) -- (10, 0);
    \draw [thick] (9.1, 0.5) -- (10, 0);
    \draw [thick] (9.1, 0.5) -- (9.1, -0.5);

  \end{tikzpicture}
  \caption{Attaching a vertex of hatness $2$ to edge $BC$.}
  \label{fig:addA2_with_edge}
\end{figure}
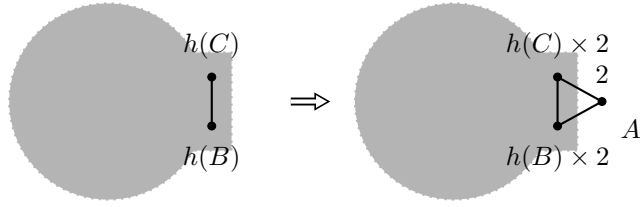
\begin{proof}
  Let the sages $B$ and $C$ have composite colors $(c,\epsilon)$, where $c$ is a
  possible hat color in the game $\langle G,h\rangle$, $\epsilon\in\{0,1\}$. Let
  sage~$A$ say color $c(A)=\epsilon_B +\epsilon_C \pmod 2$. Let sages $B$ and
  $C$ look at the colors of their neighbors in graph~$G$, calculate the  colors
  $c(B)$, $c(C)$ in accordance with their winning strategy in game $\langle G,h\rangle$ and take
  these colors as the first coordinates of their composite colors. By seeing 
  sage~$A$'s hat as well as each other  hat, the sages $B$ and $C$ can calculate
  the values $\epsilon_B$ and~$\epsilon_C$ for which $A$ does not guess correctly, they take
  these bits as second components.
\end{proof}

Attaching edge $BA$ with leaf $A$ of hatness 2 to vertex $B$ of graph $G$ can be
interpreted as the product of game on graph $G$ and classical game $\langle
{P_2(AB), 2} \rangle$. If the game on graph $G$ was winning, then by
the theorem on product, we can double hatness of vertex $B$ in construction of winning game $G\gplus{B} BA$. In the following constructor we attach the edge to
a losing game, change the hatness of vertex $B$ to $2h(B)-1$, and as a result we
get a losing game.

\begin{theorem}
  Let $\HG = \langle {G, h} \rangle$ be a loosing game and $B$ be an arbitrary
  vertex of graph~$G$. Let $G'=\left\langle V', E' \right\rangle$ be a graph 
  obtained by attaching a new pendant vertex~$A$ to graph~$G$: $V'=V\cup \{A\}$,
  $E'=E\cup \{AB\}$. Then the sages loose in the game $\langle G',h'\rangle$, where
  $h(A)=2$,  $h'(B)=2h(B)-1$ and $h'(u)=h(u)$ for other vertices $u\in V$.
\end{theorem}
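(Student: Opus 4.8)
The plan is to argue by contradiction: assume the sages have a winning collective strategy $\{f_v\mid v\in V'\}$ in the game $\langle G',h'\rangle$ and extract from it a winning strategy in $\langle G,h\rangle$, contradicting the hypothesis that $\HG$ is losing.

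First I would analyze the behaviour of the pendant sage $A$. Since $A$ sees only $B$, his strategy is a function $f_A$ from the $h'(B)=2h(B)-1$ possible colors of $B$ to $\{0,1\}$. By the pigeonhole principle one of the two fibers $f_A^{-1}(0),f_A^{-1}(1)$ has size at least $\lceil(2h(B)-1)/2\rceil=h(B)$; call it $f_A^{-1}(\epsilon)$, fix inside it a subset $S$ of exactly $h(B)$ colors of $B$, and fix a bijection $\sigma$ between $\{0,1,\dots,h(B)-1\}$ (the colors of $B$ in the game $\langle G,h\rangle$) and $S$. The role of this choice is that if we force $A$'s hat to have color $1-\epsilon$ and restrict $B$'s hat to colors in $S$, then $A$ \emph{always} guesses incorrectly.

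Next I would assemble the induced strategy on $G$. The colors of $B$ in $\langle G,h\rangle$ are renamed through $\sigma$ to colors in $S$; every other vertex keeps its color set, since $h'$ agrees with $h$ away from $B$. Each sage $v\neq B$ of $G$ plays by $f_v$, reading the hat of $B$ (when $B$ is his neighbor) through $\sigma$; here $N_G(v)=N_{G'}(v)$ because the new vertex $A$ is adjacent only to $B$. Sage $B$ plays by $f_B$, pretending that his unseen neighbor $A$ wears color $1-\epsilon$, and then translating the resulting color back through $\sigma^{-1}$ (if $f_B$ happens to output a color outside $S$, $B$ answers arbitrarily). Given any arrangement $\varphi$ on $G$, form the arrangement $\varphi'$ on $G'$ with $\varphi'(A)=1-\epsilon$, $\varphi'(B)=\sigma(\varphi(B))$, and $\varphi'$ agreeing with $\varphi$ on all other vertices. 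Since the $G'$-strategy is winning, some sage guesses correctly under $\varphi'$; by the choice of $\epsilon$ and $S$ it cannot be $A$ (his guess $f_A(\varphi'(B))=\epsilon\neq 1-\epsilon=\varphi'(A)$), so it is some $v\in V(G)$, and unwinding the translations shows that this same $v$ guesses correctly under $\varphi$ (for $v\neq B$ he literally outputs his $G'$-guess $\varphi'(v)=\varphi(v)$; for $v=B$ his $G'$-guess is $\varphi'(B)\in S$, which maps back under $\sigma^{-1}$ to $\varphi(B)$). Hence the constructed strategy wins $\langle G,h\rangle$, contradicting that $\HG$ is losing.

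There is no genuine analytic obstacle here; the only point requiring care is the bookkeeping — checking that the renaming of $B$'s colors via $\sigma$ is used consistently by all sages at once (the neighbors of $B$, the sage $B$ himself, and the auxiliary sage $A$ whose hat color is frozen), and observing that whenever $B$ guesses correctly in $G'$ his guess is automatically in $S$, so translating it back by $\sigma^{-1}$ is legitimate. Everything else is driven by the single pigeonhole step producing $h(B)$ colors of $B$ on which $A$'s answer is a fixed constant, which is exactly why the hatness $2h(B)-1$ (rather than $2h(B)$) is the critical value.
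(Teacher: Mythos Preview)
Your proof is correct and follows essentially the same approach as the paper's: both hinge on the single pigeonhole observation that among the $2h(B)-1$ colors of $B$ there are at least $h(B)$ on which $A$'s guess is constant, so freezing $A$'s hat to the other value neutralizes $A$ and leaves a copy of the game $\langle G,h\rangle$. The only cosmetic difference is that the paper phrases it from the adversary's side (``given any strategy on $G'$, build a disproving arrangement''), whereas you phrase it as a contrapositive (``a winning strategy on $G'$ would yield one on $G$'') and make the color-renaming bijection $\sigma$ explicit; the underlying construction is identical.
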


\begin{proof}
  Let the sages fix a strategy $f$ on graph~$G'$. We construct a losing hat
  arrangement for this strategy. For each of $2h(B)-1$ possible colors of $B$'s,
  hat sage~$A$ has one (of 2 possible) answers in accordance with his strategy. He
  says one of these two colors less often, namely, at most $h(B) - 1$ times. Let
 the adversary give to sage $A$ the hat of this ``rare'' color. Then the strategy
  of sage $B$ in game $\HG$ is now completely determined. Let the adversary use
  only the hats of those $h(B)$ colors for sage $B$, for which $A$ does not
  guess correctly. Under this restriction, the adversary can nevertheless construct a
  losing hat arrangement on $G$, because the game $\HG$ is losing. So the
  adversary can construct the loosing hat arrangement on graph $G'$.
\end{proof}

It is possible to attach a new vertex $A$ of hatness $2$ simultaneously to several
vertices of a losing graph. If we increase hatnesses of these vertices greatly, this
cancels out the possible advantage from the appearance of a new vertex and the graph
remains losing. In the following theorem, we consider the case of two vertices.

\begin{theorem}
  Let $G$ be a loosing graph with vertices $B$ and $C$, and
  $h(B)=h(C)=2$. Attach to the graph a new vertex~$A$
  connected with~$B$ and~$C$ only. Then the sages lose on the obtained
  graph, if we define new hat function as $h(A)=2$, $h(B)=3$, $h(C)=7$, and for
  other vertices the hat function is the same as in $G$.
\end{theorem}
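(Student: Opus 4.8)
The plan is to follow the pattern of the preceding theorem: fix an arbitrary collective strategy on the enlarged graph $G'$ and exhibit a single hat arrangement on which every sage is wrong, so that $\langle G',h'\rangle$ is losing. Since $A$ sees only $B$ and $C$, his strategy is a map $f_A\colon\{0,1,2\}\times\{0,1,\dots,6\}\to\{0,1\}$, i.e.\ a $2$-coloring of a $3\times 7$ grid, and the heart of the proof is the claim that any such coloring contains a monochromatic $2\times 2$ combinatorial rectangle: there are rows $b_1\ne b_2$, columns $c_1\ne c_2$ and a value $x$ with $f_A(b_i,c_j)=x$ for all $i,j$. Granting this, I would fix the hat of $A$ to the color $1-x$; then whichever of $b_1,b_2$ is worn by $B$ and whichever of $c_1,c_2$ is worn by $C$, sage $A$ guesses $x\ne 1-x$ and errs. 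Everything after this is about making the remaining sages err as well, for which the hypothesis that $\langle G,h\rangle$ is losing is invoked.

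The combinatorial claim I would prove by plain pigeonhole. Among the seven columns some value $x$ is taken by $f_A(0,\cdot)$ on at least $\lceil 7/2\rceil=4$ of them; let $T$ be this set of columns. If two columns of $T$ also carry $x$ in row $1$, those columns with rows $\{0,1\}$ form a monochromatic rectangle of color $x$; otherwise at least $|T|-1\ge 3$ columns of $T$ carry $1-x$ in row $1$ --- call this set $U$. If two columns of $U$ carry $1-x$ in row $2$, rows $\{1,2\}$ and those columns give a monochromatic rectangle of color $1-x$; otherwise at least $|U|-1\ge 2$ columns of $U$ carry $x$ in row $2$, and as they lie in $T$ they carry $x$ in row $0$ too, giving a monochromatic rectangle of color $x$ on rows $\{0,2\}$. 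In each case we are done. The thresholds $4,3,2$ are exactly what seven columns provide, and indeed the $3\times 6$ grid colored by the six weight-$1$ and weight-$2$ binary vectors, as well as any $2\times n$ grid, avoids a monochromatic $2\times 2$ rectangle, so this argument genuinely needs $h'(B)\ge 3$ and $h'(C)\ge 7$.

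With $A$'s hat fixed to $1-x$, relabel the two colors of $B$ in $\langle G,h\rangle$ as $b_1,b_2$ and those of $C$ as $c_1,c_2$ (the game stays losing). Restricting the fixed $G'$-strategy to $h(A)=1-x$ and to $B\in\{b_1,b_2\}$, $C\in\{c_1,c_2\}$ induces a collective strategy on $G$: each $v\notin\{B,C\}$ is not adjacent to $A$, so his neighborhood is unchanged and his strategy transfers verbatim, while the induced strategies $g_B,g_C$ of $B,C$ may still name the third, forbidden color of their old $G'$-palette, in which case we simply let $B$ guess $b_1$ and $C$ guess $c_1$. Since $\langle G,h\rangle$ is losing, there is an arrangement $\psi$ on $G$ making all of its sages wrong; extend it by $A\mapsto 1-x$. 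Then $A$ is wrong by the monochromatic-rectangle property, every $v\notin\{A,B,C\}$ is wrong because his $G'$-guess equals his $G$-guess, and $B$ is wrong because $g_B$ either named the forbidden color (which differs from $\psi(B)\in\{b_1,b_2\}$) or named an allowed color, which then coincides with $B$'s already-wrong guess in $G$ --- and symmetrically for $C$. The only delicate point, and the one I would flag as the main obstacle, is exactly this palette mismatch: in $G'$ the sages $B,C$ have more colors than in $G$, so the induced strategies are not literally strategies of the right range; the resolution is that we need only the implication ``wrong in $G$ $\Rightarrow$ wrong in $G'$'', which is automatic on the two allowed colors and vacuous on the forbidden one. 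Apart from this, one only checks the routine bookkeeping that $A$'s neighbors are exactly $B,C$ (so every other sage sees the same neighborhood in both graphs) and that all colors used in the extended arrangement are legal for $h'$.
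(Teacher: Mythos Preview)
Your proof is correct and follows essentially the same approach as the paper: locate a monochromatic $2\times 2$ combinatorial rectangle in the $3\times 7$ table encoding $A$'s strategy, fix $A$'s hat to the complementary color, restrict $B$ and $C$ to the two rows and two columns of that rectangle, and then invoke the losing hypothesis on $\langle G,h\rangle$. Your pigeonhole route to the rectangle (majority color in one row, then cascade through the other two rows) differs cosmetically from the paper's (mark a repeated value in each column, pigeonhole on the $\binom{3}{2}$ row-pairs, then on the value), and you are in fact more explicit than the paper about the palette-mismatch issue for $B$ and $C$, but the structure and key ideas coincide.
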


\begin{proof}
  Let the sages fix some strategy on the new graph. The strategy of sage~$A$ can
  be given as $3\times 7$ table: the rows correspond to the hat colors of
  sage~$B$, the columns correspond to the hat colors of sage~$A$, and the table
  entry (0 or 1) is the number that sage~$A$ says, when he sees the
  corresponding $B$'s and $C$'s hat colors.

  Each column of the table contains one of the symbols, 0 or 1, two times. Mark
  in each column two cells that contain the symbol repeated at least twice in
  this column. (If the symbol is repeated in all three cells of column, we mark any two
  of them.) The marked cells can be located either in the first and the second
  rows, or in the first and the third, or in the second and in the third. Since
  there are 7~columns, the pigeonhole principle implies that there exist two rows, in
  which the marked cells occupy three columns. The marked cells of one column
  contain either two zeroes or two ones. Therefore, one can choose two
  columns containing the same numbers in the marked cells.

  Thus we have chosen two rows (for definiteness, the $i$-th and the $j$-th) and two
  columns (for definiteness, the $k$-th and the $\ell$-th), which intersect in 4 cells
  containing the same number, say~0. Now we construct a
  disproving hat arrangement on  the new graph. First, give hthe at of color~1 to
  sage~$A$.

  Then we choose a hat of the $i$-th or the $j$-th color for sage $B$, and
  a hat of the $k$-th or the \hbox{$\ell$-th} color for sage~$C$. In this
  case, sage~$A$ says ``0'' and guesses his color incorrectly. To
  assign colors to sages $B$ and $C$, and the others, we consider a
  game on graph~$G$: since  the color of sage~$A$ is fixed, the strategies
  of the other sages on graph~$G$ is uniquely determined. The restrictions to the
  hat colors of $B$ and $C$ allow us to think that $h(B)=h(C)=2$. Since
  graph~$G$ is losing for this hat function, we will successfully find a
  disproving hat arrangement on it.
\end{proof}

The proof is based on ideas from Ramsey theory. Thus the statement can be generalized
to the case of large number of new vertices and those vertices to which they are
attached. However, apparently, such constructions give too overestimated values of the
hatnesses in losing graphs.

\subsubsection{Attaching vertices of hatnesses 2 and 3, connected by an edge}

Apparently it is hard to determine whether the graph obtained by attaching a new fragment via two independent ``jumpers'', is winning. We are able to do this for very small fragment only.

\begin{theorem}\label{thm:addA2B3}
  Let $\HG=\langle G,h\rangle$ be a winning game, and let $Z$, $C \in V$ be two
  vertices of graph~$G$. Consider a graph $G'=\left\langle V', E' \right\rangle$
  obtained by adding a new path~$ZABC$ to graph~$G$: $V'=V\cup\{A, B\}$, $E'=E\cup
  \{ZA, AB, BC\}$ (fig.~\ref{fig:addA2B3}). Then the sages win in the game
  $\HG'=\langle G',h'\rangle$, where
  \[
    h'(v)=
    \begin{cases}
      2,       &\text{if}\ v = A,\\
      3,       &\text{if}\ v = B,\\
      2h(v),   &\text{if}\ v = Z,\\
      h(v) + 1,&\text{if}\ v = C,\\
      h(v),    &\text{otherwise.}
    \end{cases}
  \]
\end{theorem}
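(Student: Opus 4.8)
The plan is to build an explicit winning strategy for $\HG'$ by fusing the two mechanisms already used in Theorems~\ref{thm:addA2} and~\ref{thm:addA2to_edge}. Fix a winning strategy $f$ for $\HG=\langle G,h\rangle$. Regard the hat of $Z$ as a composite color $(z,\eta)$ with $z\in\ZZ_{h(Z)}$ and $\eta\in\{0,1\}$, and regard the hat of $C$ as either one of the $h(C)$ ``old'' colors or a single ``new'' color $\star$; the hat of $A$ is a bit and the hat of $B$ lies in $\{0,1,2\}$. All sages of $G$ play $f$ on the \emph{reduced arrangement} $\psi$ obtained from the true one by replacing $Z$'s color by its first component $z$ and replacing $C$'s color by $0$ whenever $C$ wears $\star$ (and by its true value otherwise). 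Every $G$-sage can compute the reduced colors of its neighbours, and $\psi$ is a genuine arrangement of $\HG$, so some $G$-sage $w$ guesses $\psi(w)$ correctly. If $w\notin\{Z,C\}$ this sage already wins in $\HG'$; the two ways this can fail are that $w=Z$ (then $z$ is guessed correctly but possibly not $\eta$) and that $w=C$ while $C$ wears $\star$ (the naive $f$-guess of $C$ is then $0\neq\star$). These are the two ``leaks'' the new vertices must plug.

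The first leak is plugged as in Theorem~\ref{thm:addA2to_edge}, using the jumper $ZA$: sage $A$ guesses the bit $\eta$ it sees at $Z$, while sage $Z$, having computed $z$, outputs $1-\alpha$ for its second component, where $\alpha$ is the bit it sees at $A$. Then ``$A$ is correct'' and ``$Z$ is correct on $\eta$'' are complementary events, so whenever $Z$ is the $f$-winner of $\psi$ but wrong on $\eta$, sage $A$ wins instead. The second leak is plugged in the spirit of the detector argument of Theorem~\ref{thm:addA2}, with $B$ (which sees $C$'s hat) playing the role of detector of the new color: sage $B$ outputs a guess that "flags" whether $C$ wears $\star$, and sage $C$ overrides its $f$-guess and says $\star$ in a prescribed situation determined by what it sees at $B$ together with the value $v=f_C$ it has computed from its $G$-neighbours. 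One then runs through the residual cases — classified by whether $C$ wears $\star$ or an old color, by the value of $v$, by whether $A$ guessed correctly, and by whether $C$ is the unique $f$-winner of $\psi$ — and checks that in each of them at least one of $A$, $B$, $C$, $Z$ or another $G$-sage is correct.

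The delicate point — and the reason the fragment has to be this small, the reason $B$ needs hatness exactly $3$, and the reason the paper flags attaching via two independent jumpers as hard — is that the two leaks cannot be plugged independently by a naive composition of the earlier constructors. A detector sitting only at $C$ faces a counting tension: to catch every false negative ($C$ wears $\star$ but outputs an old color) the set of $B$-values that trigger ``$C$ says $\star$'' must be large, whereas to catch every false positive ($C$ wears an old color but outputs $\star$) that same set must be small, and with only three values at $B$ neither choice works on its own; moreover the bit at $A$ is already committed to the $\eta$-bookkeeping. The resolution is to design the strategies of $A$, $B$, $C$, $Z$ jointly, so that the residual bad configurations for the $\eta$-leak and for the $\star$-leak become disjoint and each is absorbed by some sage, the three hat-values of $B$ being allocated between the two failure modes with the help of what $B$ and $C$ see. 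I expect this joint design, together with the finite verification that every residual case is covered, to be the bulk of the argument; no genuine computation is involved.
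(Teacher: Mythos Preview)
Your high-level plan is sound: composite colour for $Z$, a new colour $\star$ for $C$, the rest of $G$ playing $f$ on a reduced arrangement $\psi$, and the added vertices plugging the two residual leaks. This is exactly the skeleton the paper uses. The gap is in the concrete design of $A$: you commit $A$ to look only at $Z$ (guessing the bit $\eta$), and this cannot be repaired by any choice of strategies for $B$ and $C$.

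To see why, take $G=P_2$ on $\{Z,C\}$ with $h\equiv 2$ and the standard strategy in which exactly one of $Z,C$ is correct on any $\psi$. In $G'$ one then has $h'(Z)=4$, $h'(C)=3$, $h'(A)=2$, $h'(B)=3$. Under your rule the events ``$A$ correct'' and ``$Z$ correct on $\eta$'' are complementary, so assume $c_A\neq\eta$. Choosing $z$ so that $C$ is the unique $f$-winner on $\psi$ forces $c_C\in\{v,\star\}$ with $v=f_C$, and no other $G$-sage wins. What remains is a two-person guessing problem on the edge $BC$: $B$ has $3$ colours, $C$ effectively has $2$ colours $\{v,\star\}$; $B$ sees $c_A$ and $c_C$, $C$ sees $c_B$ and its $G$-neighbours. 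The extra bit $c_A$ that $B$ sees is useless, because for each value of $c_A$ the adversary can set $\eta=1-c_A$ independently; a short count then shows that for each $c_C\in\{v,\star\}$ at least two of the three values of $c_B$ must be covered by $C$'s guess, which is impossible. So no ``joint design of $B,C$'' alone rescues your scheme.

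The missing idea --- and the actual content of the paper's proof --- is that $A$ must look at $B$ as well as at $Z$. Concretely, the paper lets $A$ output $c_B$ when $c_B\in\{0,1\}$ and output $\epsilon_Z$ only when $c_B=2$; correspondingly $B$ outputs $2$ if it sees $\star$ on $C$ and $1-c_A$ otherwise, and $C$ outputs $\star$ when $c_B\neq 2$ and its $f$-value when $c_B=2$. This splits the analysis by $(c_A,c_B)$: for $c_B\in\{0,1\}$ the triple $A,B,C$ already covers everything without touching $G$, while for $c_B=2$ one is back in your ``$A$ versus $\eta$'' regime, but now with $B$ catching the $\star$-leak and $C$ playing honestly on $G$. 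Your proposal mentions that a joint design is needed but does not locate the crucial move, namely letting $A$ branch on $c_B$; once this is said, the case-check is the four-line verification in the paper.
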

\begin{figure}[h]\centering\footnotesize
  \begin{tikzpicture}[scale=.65
    ]
    \draw [fill=green!10!,lgreen,dotted,thick] (0,0) circle (2);
    \draw [fill=green!10!,lgreen,dotted,thick] (1,-1) rectangle (2.5, 1);
    \draw [fill=black] (2.1, -0.5) circle (0.08) node [label={below:{$h(C)$}}] {};
    \draw [fill=black] (2.1, 0.5) circle (0.08) node [label={above:{$h(Z)$}}] {};
    \draw [vecArrow] (3.7, 0) to (4.5, 0);

    \draw [fill=green!10!,lgreen,dotted,thick] (7,0) circle (2);
    \draw [fill=green!10!,lgreen,dotted,thick] (7,-1) rectangle (9.5, 1);
    \draw [fill=black] (9.1, -0.5) circle (0.08) node [label={below:{$h(C) + 1$}}] {};
    \draw [fill=black] (9.1, 0.5) circle (0.08) node [label={above:{$h(Z) \times 2$}}] {};
    \draw [fill=black] (10, 0.5) circle (0.08) node [label=right:2, label=60:$A$] {};
    \draw [fill=black] (10, -0.5) circle (0.08) node [label=right:3, label=-60:$B$] {};
    \draw[thick] (9.1, -0.5) -- (10, -0.5);
    \draw[thick] (9.1, 0.5) -- (10, 0.5);
    \draw[thick] (10, -0.5) -- (10, 0.5);
  \end{tikzpicture}

  \caption{Adding a new path~$ZABC$.}
  \label{fig:addA2B3}
\end{figure}
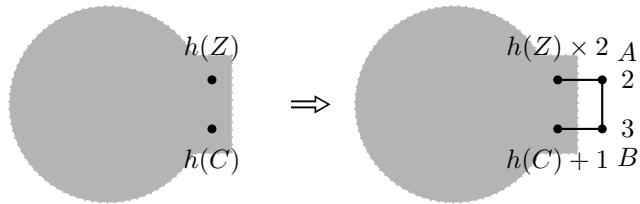

\begin{proof}
  Let sage $X$ get a hat of color~$c_X$. Consider the color $c_Z$ as
  composite:  $c_Z=(\epsilon, c)$, where $\epsilon_Z\in \{0, 1\}$, and 
  $c$ is one of $h(Z)$ colors in the game $\HG$. Let us describe a winning strategy.

  \begin{itemize}[nosep]
  \item If $c_B\ne 2$, let sage $A$ say  $B$'s hat color, otherwise he says
    the color~$\epsilon_Z$.
  \item Sage $B$ says ``2'' if he sees the hat of new color on sage~$C$,
    otherwise he says $1 - c_A$.
  \item  If $c_B\ne 2$, then sage $C$ says new color, otherwise $C$ uses strategy of the game $\HG$.
  \item Let $Z$ take the first bit of his color $\epsilon_Z\ne c_A$, and find
    the second component of his color in accordance with his strategy in $\HG$.
  \item The sages in $V(G)\setminus\{C,Z\}$ use strategy of $\HG$. It is assumed here that 
    the neighbors of $Z$ see at the second component of $Z$'s color only and
    the neighbors of $C$ do not distinguish the new color and 0-th color.
  \end{itemize}

  Now we consider all variants of pairs $(c_A, c_B)$ and check that the strategy is
  winning.

  In cases $(0, 0)$ and $(1, 1)$ sage $A$ guesses correctly.

  In cases $(0, 1)$ and $(1, 0)$ $B$ or $C$ guesses correctly.

  In cases  $(0, 2)$ and $(1, 2)$ sage $A$ guesses correctly if $c_A= \epsilon_Z$, and
  sage $B$ guesses correctly if $C$ has hat of new color. In the remaining cases, the
  sages on graph $G$ use the strategy of game $\HG$, and one of them guesses correctly.
\end{proof}

\begin{corollary}\label{cor:cycle_233}
  Let graph $G$ be cycle $C_n$ $(n \geq 4)$ and $A$, $B$, $C$ be three
  consequent vertices of the cycle. Then the sages win in game $\langle{G,
    h_4^{A2B3C3}}\rangle$.
\end{corollary}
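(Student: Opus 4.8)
The plan is to obtain the cycle $C_n$ with hat function $h_4^{A2B3C3}$ as the output of the constructor of Theorem~\ref{thm:addA2B3} applied to a suitable path, exactly paralleling the way Corollary~\ref{cor:cycle323} is deduced from Theorem~\ref{thm:addA2}.

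First I would take as the base game $\HG=\langle P_{n-2}(CZ), h_4^{C2Z2}\rangle$: the path on $n-2$ vertices whose two endpoints $C$ and $Z$ have hatness $2$ and whose interior vertices have hatness $4$. Since $n\geq 4$ this is a legitimate path (for $n=4$ it degenerates to the single edge $CZ$, i.e.\ the classical game $\langle P_2,2\rangle$), and the sages win in it by Corollary~\ref{cor:path2442}.

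Next I would apply Theorem~\ref{thm:addA2B3} to $\HG$ with the two distinguished vertices taken to be precisely $C$ and $Z$, attaching the new path $ZABC$. The resulting graph $G'$ has $n$ vertices ($V(P_{n-2})$ together with $A$ and $B$) and its edges are the $n-3$ original path edges plus $ZA$, $AB$, $BC$; tracing around, $C-\cdots-Z-A-B-C$ is a cycle, so $G'=C_n$, and $A$, $B$, $C$ occur consecutively in it. Finally I would check that the hat function $h'$ produced by the theorem is exactly $h_4^{A2B3C3}$: it assigns $h'(A)=2$, $h'(B)=3$, $h'(Z)=2h(Z)=4$, $h'(C)=h(C)+1=3$, and leaves every interior path vertex with hatness $4$. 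Hence the sages win by Theorem~\ref{thm:addA2B3}.

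There is no real obstacle here beyond bookkeeping; the only points worth a line of care are the vertex count (the constructor adds two vertices, so the base path must have $n-2$ of them) and the degenerate case $n=4$, where the base graph is a single edge — note that Theorem~\ref{thm:addA2B3} does not require the distinguished vertices $Z$ and $C$ to be non-adjacent, so this case is covered.
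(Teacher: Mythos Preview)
Your proof is correct and takes exactly the same approach as the paper: apply Theorem~\ref{thm:addA2B3} to the path $P_{n-2}(ZC)$ with hatnesses $2,4,\dots,4,2$, which is winning by Corollary~\ref{cor:path2442}. You simply spell out the bookkeeping (vertex count, the $n=4$ degenerate case, and the resulting hat values) that the paper leaves implicit in its one-line proof.
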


\begin{proof}
  Follows from theorem~\ref{thm:addA2B3} applied to path $P_{n-2}(ZC)$ with hatnesses
  $2,4,\dots, 4,2$ (this game is winning by corollary~\ref{cor:path2442}).
\end{proof}

\subsubsection{Attaching a leaf of large hatness}

Our latest constructor is great in that it works both for winning
and for losing games. It claims that attaching to a graph a leaf with hatness
more than $2$ does not affect the result of the game.

\begin{definition}
  Let $\HG=\langle {G, h} \rangle$ be a game and $A\in V(G)$. 
  The \emph{game $\HG$ with the hint $A^*$} is defined as follows. The sages play on graph $G$ with
  hat function $h$, but \textbf{during the test} the adversary comes up to
  sage $A$ and says the true statement ``I just put on your head a hat of color
  $c_1$ or $c_2$''. During the conversation the sages know that the adversary
  is going to give a hint, but do not know what colors he will say. So the
  sages determine usual strategies for everyone, except for the sage $A$, and sage $A$
  gets the set of $\binom{h(A)}{2}$ strategies, one for each possible hint.
\end{definition}

``A theory of \hats game with hints'' (where hatness of each sages is equal to
3) is developed by Kokhas and Latyshev's in~\cite{Kokhas2018}. The
following lemma from~\cite{Kokhas2018} remains almost unchanged in the case
of arbitrary hatnesses. We present here its the proof to make the paper self-contained.

\begin{lemma}\label{lem:hint_Astar}
 The hint $A^*$ does not affect the result of the \hats game.
\end{lemma}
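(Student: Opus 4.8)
The plan is to show that any winning strategy in the game $\HG$ with the hint $A^*$ can be converted into an ordinary winning strategy in $\HG$ (the converse is trivial, since a strategy without a hint is in particular a strategy that ignores any hint). So suppose the sages have a winning collective strategy for $\HG$ with the hint $A^*$: every sage $v\neq A$ has a fixed function $f_v$, and sage $A$ has a family of functions $\{g_{\{c_1,c_2\}}\}$, one for each unordered pair of colors $\{c_1,c_2\}\subseteq\{0,1,\dots,h(A)-1\}$, where $g_{\{c_1,c_2\}}$ tells $A$ what to guess, as a function of the hats he sees on $N(A)$, when the hint is ``$c_1$ or $c_2$''. The winning condition says: for every hats arrangement $\varphi$ and every pair $\{c_1,c_2\}$ that actually contains $\varphi(A)$, at least one sage guesses correctly when $A$ uses $g_{\{c_1,c_2\}}$.

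The key step is to build from this a single strategy $f_A$ for sage $A$ that works without any hint. Fix the hats on $N(A)$; this determines the guesses of all sages $v\neq A$ (their strategies do not depend on $A$'s hat, only on their own neighborhoods, and $A\in N(v)$ only enters through $\varphi(A)$, which we will need to be careful about — actually the neighbors of $A$ do see $A$'s hat, so their guesses depend on $\varphi(A)$). The cleaner way: I would argue about the ``loss set''. Given a fixed strategy and a fixed hint $\{c_1,c_2\}$, say a hats arrangement is \emph{bad} for that hint if nobody guesses correctly. The hint being winning means: for each $\{c_1,c_2\}$, every bad arrangement $\varphi$ has $\varphi(A)\notin\{c_1,c_2\}$. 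Equivalently, fixing the hats on $V\setminus\{A\}$, let $W\subseteq\{0,\dots,h(A)-1\}$ be the set of colors $x$ for $A$'s hat such that, when $A$ plays $g_{\{c_1,c_2\}}$, somebody (possibly $A$ himself, possibly a neighbor reacting to $A$'s hat $x$) guesses right. Then winning-with-hint says $\{c_1,c_2\}\cap W^{c} = \varnothing$ is false only when... I would instead phrase it as: for each pair $\{c_1,c_2\}$, at least one of $c_1,c_2$ lies in the ``good set'' $S_{\{c_1,c_2\}}$ of $A$-colors for which that hint's play succeeds. A short combinatorial argument (this is the heart, and the main obstacle to state crisply): since for every pair $\{c_1,c_2\}$ the play $g_{\{c_1,c_2\}}$ succeeds for at least one of $c_1,c_2$, one can select, for each color $x$, a single guess $f_A(x)$ so that $f_A$ succeeds for all but at most one color; concretely, pick any color $x_0$ that is ``bad for every hint containing it'' — if such $x_0$ does not exist, every color is good under some hint and we can patch together an everywhere-winning $f_A$; if $x_0$ does exist, then for every other color $y$ the hint $\{x_0,y\}$ forces $y$ to be good, so defining $f_A(y):=g_{\{x_0,y\}}(\text{observed hats on }N(A))$ for $y\neq x_0$ and $f_A(x_0)$ arbitrarily yields a strategy that wins for all arrangements with $\varphi(A)\neq x_0$; but arrangements with $\varphi(A)=x_0$ are also covered because the adversary giving the (honest) hint $\{x_0,y\}$ for any $y$ must still be beaten, and in those arrangements $\varphi(A)=x_0$, so success there is exactly guaranteed by $g_{\{x_0,y\}}$ for some — here I need to recheck orientation.

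Let me restate the honest heart: I would argue by contradiction. Suppose $f_A$ (built as ``$f_A(y)=g_{\{x_0,y\}}$ for $y\neq x_0$, and $f_A(x_0)$ chosen as $g_{\{x_0,y_1\}}$ for a fixed $y_1\neq x_0$'') loses on some arrangement $\varphi$. Then in $\HG$-with-hint, feed the adversary's choice of $\varphi$ together with the honest hint $\{x_0,\varphi(A)\}$ (or $\{x_0,y_1\}$ if $\varphi(A)=x_0$). Since $\varphi(A)\in\{x_0,\varphi(A)\}$ this is a legal honest hint, $A$ plays exactly $g_{\{x_0,\varphi(A)\}}(\text{hats on }N(A))=f_A(\varphi(A))$, all other sages play their fixed $f_v$ on the same arrangement, so the outcome is identical to the hintless game — but the hinted strategy is winning, so somebody guesses correctly, contradicting that $f_A$ loses on $\varphi$. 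Therefore $f_A$ is winning, proving the hint $A^*$ does not help, and combined with the trivial direction it does not affect the result.

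The main obstacle is purely bookkeeping: one must be careful that the hint is \emph{honest} (the adversary's hat really is one of the two named colors), so the pair fed back to the hinted strategy must contain $\varphi(A)$; once that is respected, the reduction is transparent. I would also remark that this is essentially verbatim the argument of \cite{Kokhas2018} with $3$ replaced by arbitrary $h(A)$, the only new point being that $\binom{h(A)}{2}$ hint-strategies rather than $\binom{3}{2}=3$ are in play, which changes nothing in the logic above since we only ever use hints of the form $\{x_0,y\}$.
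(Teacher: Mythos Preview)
Your final construction is not a valid strategy for sage $A$. You write ``$f_A(y)=g_{\{x_0,y\}}$ for $y\neq x_0$'' and then, in the verification, compute ``$f_A(\varphi(A))$''. But $\varphi(A)$ is $A$'s own hat color, which $A$ does not see. A strategy for $A$ must be a single function of the hats on $N(A)$, not a function of $A$'s hat. Concretely, your contradiction argument feeds the hint $\{x_0,\varphi(A)\}$ and compares with the play $g_{\{x_0,\varphi(A)\}}(\text{hats on }N(A))$; but in the hintless game $A$ cannot select which $g_{\{x_0,\cdot\}}$ to use, because he does not know $\varphi(A)$. If you try to repair this by fixing one pair $\{x_0,y_1\}$ once and for all and setting $f_A:=g_{\{x_0,y_1\}}$, the feedback step fails whenever $\varphi(A)\notin\{x_0,y_1\}$, since then the hint $\{x_0,y_1\}$ is not honest and the hinted winning condition tells you nothing about that arrangement.

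The paper's argument fixes what $A$ sees, i.e.\ the hats $(u,v,w,\dots)$ on $N(A)$, and observes: among all full arrangements with these $N(A)$-colors in which no sage in $V\setminus\{A\}$ guesses correctly, the color of $A$ is uniquely determined. For if two such arrangements had $\varphi(A)=x$ and $\varphi'(A)=y$ with $x\neq y$, the adversary could announce the honest hint $\{x,y\}$; sage $A$'s guess $g_{\{x,y\}}(u,v,w,\dots)$ misses at least one of $x,y$, and the adversary plays the corresponding arrangement, beating the hinted strategy. This uniqueness lets you \emph{define} $f_A(u,v,w,\dots)$ to be that color (or anything if no such arrangement exists). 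Your ``pair containing a fixed $x_0$'' idea is the right proof device, but it should be used to establish this uniqueness claim (take the hint $\{x,y\}$ for two candidate bad colors), not to parametrize $f_A$ by $A$'s unknown hat.
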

\begin{proof}
  Assume that the sages win with hint $A^*$. Let us fix strategies of all sages except for $A$
  in the game with the hint $A^*$; we construct a
  strategy of $A$ so that the sages win without hints.

  Assume that the adversary gives a hat of color $x$ to $A$, so that
  there exists a~hat arrangement in which $A$ gets a hat of color $x$, his
  neighbors get hats of colors $u$, $v$, $w$ \dots, the other sages also get
  hats of some colors, and no one (except for $A$) guesses correctly. we
  want that in this case the sage $A$ to guesses the color of his hat correctly, i.\,e., his
  strategy satisfies the requirement $f_A(u, v, w, \dots) = x$.

  These requirements for different hat arrangements do not contradict each other.
  Indeed, if there exists another hat arrangement where the neighbors still have
  colors $u$, $v$, $w$, \dots and  the sage $A$ gets another color $y$, then the
  sages cannot win with the hint $A^*$, because the adversary can inform $A$
  that  he has a hat of color $x$ or $y$ and then choose one of the two hat
  arrangements for which sage~$A$ does not guess his color correctly.
\end{proof}

\begin{theorem}
  Let $\HG_1 = \langle {G_1, h_1} \rangle$, $B\in V(G_1)$, $G_2=G_1\gplus{B} P_2(AB)$ and $\HG_2 = \langle G_2, h_2 \rangle$,
  where $h_2\Big\vert_{V(G_1)}=h_1$, \ $h_2(A)\geq 3$. The ame  $\HG_1$ is winning
  if and only if $\HG_2$ is winning.
\end{theorem}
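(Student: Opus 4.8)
The plan is to prove the two implications separately; the forward one is routine monotonicity, and the reverse one I would reduce to the hint lemma (Lemma~\ref{lem:hint_Astar}), which is exactly why it appears just before this theorem.

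For $\HG_1$ winning $\Rightarrow$ $\HG_2$ winning I would use that $\HG_1$ is a subgame of $\HG_2$ with compatible hat function. Let the sages of $G_1$ follow a winning strategy of $\HG_1$ (the sage $B$ ignoring the hat of the new pendant vertex $A$), and let $A$ guess arbitrarily. Any arrangement $\psi$ on $G_2$ restricts to a legal arrangement on $G_1$, so some $v\in V(G_1)$ guesses correctly there; since $v$ observes only its $G_1$-neighbours, its guess is unchanged in $\HG_2$, so the sages win. Note this direction does not need $h_2(A)\ge 3$.

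For the reverse implication, fix a winning strategy for $\HG_2$. The sage $A$ sees only $B$, so his guess is a function $f_A\colon\{0,\dots,h_1(B)-1\}\to\{0,\dots,h_2(A)-1\}$ of $B$'s colour alone; the sage $B$ sees $A$ together with his $G_1$-neighbours, so for each colour $a$ of $A$ his strategy is some $G_1$-strategy $g_B^{(a)}$, while every other sage of $G_1$ observes only $G_1$-neighbours and uses a fixed $G_1$-strategy. The key observation I would record is: whenever $A$ guesses wrongly in an arrangement $(\varphi,a)$ of $G_2$, i.e.\ $a\ne f_A(\varphi(B))$, somebody of $G_1$ guesses correctly; equivalently, for every arrangement $\varphi$ on $G_1$ and every $a\ne f_A(\varphi(B))$, the collective $G_1$-strategy in which $B$ plays $g_B^{(a)}$ and the others keep their fixed strategies is successful on $\varphi$. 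Now I would pass to the game $\HG_1$ with the hint $B^*$: the sages of $G_1\setminus\{B\}$ keep their fixed strategies, and when $B$ receives the hint ``my colour is $c_1$ or $c_2$'' he picks any colour $a^{*}\notin\{f_A(c_1),f_A(c_2)\}$ — which exists precisely because $h_2(A)\ge 3$ — and plays $g_B^{(a^{*})}$. For the true arrangement $\varphi$ we have $\varphi(B)\in\{c_1,c_2\}$, hence $a^{*}\ne f_A(\varphi(B))$, so by the observation some sage of $G_1$ guesses correctly. Thus $\HG_1$ with hint $B^*$ is winning, and Lemma~\ref{lem:hint_Astar} yields that $\HG_1$ is winning.

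The main obstacle is purely conceptual rather than computational: one must recognise that ``$B$ observes the colour of the leaf $A$'' plays, mediated by the map $f_A$, the same role as ``$B$ is handed a two-element hint about his own colour'', which is the setting of the hint lemma; once this identification is made, the hypothesis $h_2(A)\ge 3$ is used in exactly one place — to dodge the two forbidden values $f_A(c_1),f_A(c_2)$ — which also explains why hatness-$2$ leaves (Theorems~\ref{thm:addA2} and~\ref{thm:addA2to_edge}) genuinely can change the outcome while larger leaves cannot. No routine calculation is needed beyond bookkeeping of which vertex sees which hat.
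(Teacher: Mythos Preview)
Your proposal is correct and follows essentially the same route as the paper: the forward implication is the trivial monotonicity, and the reverse implication is obtained by converting the pendant vertex $A$ into a hint $B^*$ (choosing a colour $a^*\notin\{f_A(c_1),f_A(c_2)\}$, which uses $h_2(A)\ge 3$) and then invoking Lemma~\ref{lem:hint_Astar}. Your notation $f_A$, $g_B^{(a)}$ makes the bookkeeping slightly more explicit than in the paper, but the argument is the same.
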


\begin{proof}
  In one direction the statement is obvious: if game $\HG_1$ is winning, then
  game~$\HG_2$ is also winning. (The sages on the subgraph $G_1$ win.)

  Now prove that if game $\HG_2$ is winning, then $\HG_2$ is also winning. We
  demonstrate that if the sages win in game~$\HG_2$, then they can win in
  game~$\HG_1$ with hint~$B^*$.

  Let $f_2$ be a winning strategy for game $\HG_2$. In order to construct a
  winning strategy for game $\HG_1$ with the hint~$B^*$, we first 
  define a strategy for sages on $V(G_1)\setminus\{B\}$ --- let they use $f_2$.
  Second, for any two different colors $(b_1, b_2)$ that can occur in the
  hint we define a strategy of~$B$. Since $h_2(A)\geq 3$, for each pair of
  colors $(b_1, b_2)$, $b_1 \neq b_2$, we can find a color $a$ such that $A$ can
  not say ``$a$'' if he sees that $B$'s hat is of color $b_1$ or~$b_2$. Let
  sage $B$, having received the hint $(b_1, b_2)$, say the color defined by the strategy
  $f_2$ when he sees the hat of color~$a$ on sage~$A$ and the
  colors of the other neighbors in $G_1$ are given by the current arrangement.

  This strategy is winning in the game~$\HG_1$ with the hint~$B^*$. Indeed, let
  the hat arrangement on $G_1$ be fixed and sage $B$ get a hint $(b_1, b_2)$.
  Consider the corresponding hat arrangement on $G_2$ (we give the hat of color $a$ to
  sage $A$). Then all the sages on $G_2$ use strategy $f_2$ (and sage $A$ does
  not guess). Therefore, someone on $G_1$ guesses correctly. Thus, for the hat
  arrangement and the hint under consideration, the sages on $G_1$ win, and hence the sages
  win with the hint~$B^*$. Then they win in game
  $\HG_1$ by lemma~\ref{lem:hint_Astar}.
\end{proof}

The proven theorem has an interesting generalization for losing graphs. If we
glue two losing graphs $G_1$ and $G_2$ by identifying a
vertex of hatness 2 of graph $G_2$ and any vertex of graph $G_1$, then the obtained
graph is losing.

\begin{theorem}
  Let $G_1$ and $G_2$ be graphs such that $V(G_1)\cap  V(G_2)=\{A\}$,  $G=  G_1+_AG_2$.
  Assume that the games $\HG_1=\langle G_1, h_1\rangle$ and $\HG_2=\langle
  G_2, h_2\rangle $ are losing, and also $h_1(A)\geq h_2(A)=2$. Then the
  game $\HG=\langle G_1+_AG_2, h\rangle$ is losing, where
  $$
  h(x) =
  \begin{cases}
    h_1(x),&x\in V(G_1)\\
    h_2(x),&x\in V(G_2)\setminus A.
  \end{cases}
  $$
\end{theorem}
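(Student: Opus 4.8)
The plan is to reduce ``losing'' to a statement about the hats on the neighbourhood of the shared vertex $A$, and then to paste together the two such statements coming from $\HG_1$ and $\HG_2$ (note that $A$ is a cut vertex of $G$, so every sage other than $A$ sees only one of the two sides). The reformulation I would record first — it is in essence the ``pushing a hint'' lemma of \cite{Kokhas2018}, compare lemma~\ref{lem:hint_Astar}, and has a two‑line proof — is the following. If $\HG'=\langle G',h'\rangle$ is a losing game, $A\in V(G')$, and the strategies of all sages other than $A$ are fixed, then there exist an arrangement $y$ of hats on $N_{G'}(A)$ and two \emph{distinct} colours $a_0\ne a_1$ of $A$ such that for each $i\in\{0,1\}$ the arrangement $y$ extends to an arrangement $\Psi_i$ of $V(G')\setminus A$ for which, when $A$ wears colour $a_i$, nobody among $V(G')\setminus A$ guesses correctly. (Otherwise every $y$ admits at most one colour $c(y)$ with this property; letting $A$ guess $c(y)$, or an arbitrary colour when no such $c(y)$ exists, upgrades the fixed partial strategy to a winning strategy for $\HG'$, a contradiction.)

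Next I would fix a strategy $f$ on $\HG$. Since $A$ is a cut vertex, every sage of $V(G_1)\setminus A$ sees only vertices of $G_1$, so its $f$‑strategy is a legal strategy on $\HG_1$; likewise on the $G_2$ side. Applying the reformulation to $\HG_1$ produces colours $a_0\ne a_1$ of $A$, an arrangement $y_1$ on $N_{G_1}(A)$, and arrangements $\Psi^{(1)}_0,\Psi^{(1)}_1$ of $V(G_1)\setminus A$, each restricting to $y_1$ on $N_{G_1}(A)$, such that dressing $V(G_1)\setminus A$ by $\Psi^{(1)}_i$ and $A$ by $a_i$ makes nobody in $V(G_1)\setminus A$ guess right. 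The key move is to exploit $h_2(A)=2$: rename the two colours of $A$ in $\HG_2$ so that they become exactly $a_0$ and $a_1$. Renaming colours does not affect whether a game is losing, so the renamed game is still losing, and the reformulation applied to it — with $A$'s entire palette being $\{a_0,a_1\}$, so that the two distinct colours it delivers are forced to be $a_0$ and $a_1$ — yields an arrangement $y_2$ on $N_{G_2}(A)$ and arrangements $\Psi^{(2)}_0,\Psi^{(2)}_1$ of $V(G_2)\setminus A$, each restricting to $y_2$ on $N_{G_2}(A)$, such that dressing $V(G_2)\setminus A$ by $\Psi^{(2)}_i$ and $A$ by $a_i$ makes nobody in $V(G_2)\setminus A$ guess right.

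Finally I would assemble a losing arrangement for $\HG$. Upon seeing $y_1$ on $N_{G_1}(A)$ and $y_2$ on $N_{G_2}(A)$, sage $A$ names some colour $c^{*}$; pick $i\in\{0,1\}$ with $a_i\ne c^{*}$. Put the hats $\Psi^{(1)}_i$ on $V(G_1)\setminus A$, the hats $\Psi^{(2)}_i$ on $V(G_2)\setminus A$, and colour $a_i$ on $A$; this is consistent since neither $\Psi^{(1)}_i$ nor $\Psi^{(2)}_i$ colours $A$. Each sage of $V(G_1)\setminus A$ sees exactly the hats of $\Psi^{(1)}_i\cup\{A\mapsto a_i\}$ on its neighbours and hence errs; each sage of $V(G_2)\setminus A$ errs for the same reason; and $A$ sees $y_1$ and $y_2$ (because $\Psi^{(j)}_i$ restricts to $y_j$ on $N_{G_j}(A)$), so it names $c^{*}\ne a_i$ and errs. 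Thus no sage guesses correctly, so $\HG$ is losing.

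The one genuinely delicate step is the alignment in the middle paragraph: the vertex $A$ carries $h_1(A)$ colours in $\HG$ but only $2$ in $\HG_2$, so a priori the pair of colours extracted from $\HG_2$ need not be the pair extracted from $\HG_1$. The recolouring trick is exactly what forces the two pairs to coincide, and it leans on $h_2(A)=2$ precisely (it would break down for $h_2(A)\ge 3$, where the reformulation only gives \emph{some} two colours). Once the pairs agree, the fact that $A$ is a cut vertex makes the two one‑sided dressings paste together with no interaction, and everything else is routine bookkeeping.
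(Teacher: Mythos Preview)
Your proof is correct and follows essentially the same route as the paper: both extract, via the identical ``if every $y$ on $N(A)$ admitted at most one bad colour then $A$ could win $\HG_1$'' argument, two colours $a_0\ne a_1$ together with an arrangement $y_1$ on $N_{G_1}(A)$ so that nobody on the $G_1$-side guesses, and both then exploit $h_2(A)=2$ to restrict $A$'s palette to $\{a_0,a_1\}$ and finish via the losing of $\HG_2$. The only cosmetic difference is that the paper, having fixed $y_1$, invokes the losing of $\HG_2$ \emph{directly} (now $A$'s strategy is determined, so a single disproving arrangement $\psi$ on $G_2$ suffices), whereas you apply the two-colour lemma a second time to $\HG_2$ and defeat $A$ separately at the end; the substance is the same.
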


\begin{proof}
  Assume that game $\HG$ is winning, and let $f$ be a winning strategy. Let
  $N(A)$ be the set of neighbors of vertex $A$ in graph $G_1$. Every hat
  arrangement $\varphi$ on graph $G_1$ determines answer in accordance with strategy $f$ of 
  each sage in $V(G_1)\setminus A$. Let us prove that there exist two different hat
  arrangements $\varphi_1$ and $\varphi_2$ on vertices of graph $G_1$, such that
  $\varphi_1\big|_{N(A)}=\varphi_2\big|_{N(A)}$, $\varphi_1(A)\neq\varphi_2(A)$
  and if the sages of $G_1$ play in accordance with strategy $f$, then in both
  arrangements none of $V(G_1)\setminus A$ guesses correctly.

  Assume that there are no two such arrangements. This means that for every
  hat arrangement $c$ on $N(A)$, there is at most one color $\alpha(c)$ of $A$'s
  hat, for which the hat arrangement $c\cup\alpha(c)$ on $N(A)\cup A$ can be
  extended to a hat arrangements on $V(G_1)$ such that none in
  $V(G_1)\setminus A$ guesses correctly in accordance with strategy $f$. We consider the following
  strategy for game $\HG_1$. Let everyone in $V(G_1)\setminus A$ play in accordance with
  strategy $f$, and sage $A$ say $\alpha(c)$ (or 0, if $\alpha(c)$ is
  undefined). This strategy is winning, because if none in $V(G_1)\setminus
  A$ guess correctly, then $A$ has a hat of color $\alpha(c)$ and guesses correctly, a
  contradiction.

  Let us consider these two arrangements $\varphi_1$ and~$\varphi_2$. We fix the hat
  arrangement $c=\varphi_1\big|_{N(A)}=\varphi_2\big|_{N(A)}$ on $N(A)$ and
  restrict ourselves only to those arrangements, where sage $A$ gets a hat of
  one of two colors $\varphi_1(A)$ or $\varphi_2(A)$. Then strategy $f$
  determines actions of the sages on graph $G_2$, i.\,e.\, in the losing game
  $\HG_2$ (sage $A$ can say more than two colors, but it does not help to win). ``Losing''
  means that there exists a disproving hat arrangement~$\psi$. If
  $\psi(A)=\varphi_1(A)$, then $\psi\cup \varphi_1\big|_{V(G_1)\setminus A}$ is
  a disproving arrangement on~$G$, and if $\psi(A)=\varphi_2(A)$, then $\psi\cup
  \varphi_2\big|_{V(G_1)\setminus A}$ is a disproving arrangement on~$G$.
\end{proof}

\subsection{More complicated constructors}

The next theorem generalizes of theorem~\ref{thm:addA2}.

\begin{theorem}\label{thm:addA2gen}
  Let $\langle G_1 ,h_1\rangle$, $\langle G_2 ,h_2\rangle$ be two games, in
  which the sages win. Let $A_1$, $A_2$, \dots, $A_k\in V_1$ and $B_1$, $B_2$, \dots,
  $B_m\in V_2$. Let $G'=\left\langle V', E' \right\rangle$ be a graph obtained
  by adding all the edges $A_iB_j$ to graph $G_1\cup G_2$: $V'=V_1\cup V_2$,
  $E'=E_1\cup E_2\cup \{A_iB_j, i=1,\dots, k; j=1,\dots, m\}$
  (fig.~\ref{fig:addA2gen}). Then the sages win in the game $\langle G',h'\rangle$,
  where
  $$
  h'(u)=\begin{cases}
    h_1(u), & u\in G_1\setminus \{ A_1, A_2, \dots, A_k\}, \\
    h_2(u), & u\in G_2\setminus \{ B_1, B_2, \dots, B_m\}, \\
    h_1(u)+1,& u\in \{ A_1, A_2, \dots, A_k\}, \\
    h_2(u)+1,& u\in \{ B_1, B_2, \dots, B_m\}.
  \end{cases}
  $$
\end{theorem}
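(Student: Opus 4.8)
The plan is to fix winning strategies $f_1$ and $f_2$ for $\HG_1$ and $\HG_2$ and to assemble from them an explicit strategy for the game on $G'$, whose behaviour depends only on whether the extra colors show up on the two sides of the join. Call the additional color $h_1(A_i)$ of $A_i$ and the additional color $h_2(B_j)$ of $B_j$ \emph{new}, and all the remaining colors \emph{old}. For a given hat arrangement $\varphi$ put $\alpha=1$ if some $A_i$ wears its new color and $\alpha=0$ otherwise, and define $\beta$ in the same way for the vertices $B_j$. The key structural remark is that in $G'$ every $A_i$ is adjacent to every $B_j$, so each $A_i$ sees the colors of all of $B_1,\dots,B_m$ and therefore knows $\beta$; symmetrically each $B_j$ knows $\alpha$. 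Besides this, every vertex of $G_1$ still sees all of its $G_1$-neighbours (as $E_1\subseteq E'$), and likewise for $G_2$.

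I would then prescribe the following strategy, which is deliberately \emph{asymmetric} in the two sides. Every vertex of $G_1\setminus\{A_1,\dots,A_k\}$ plays $f_1$ on its $G_1$-neighbourhood (saying an arbitrary color, say $0$, on the irrelevant inputs where a neighbour happens to show a new color), and symmetrically every vertex of $G_2\setminus\{B_1,\dots,B_m\}$ plays $f_2$. Each $A_i$ says its new color $h_1(A_i)$ when $\beta=1$, and plays $f_1$ on its $G_1$-neighbourhood when $\beta=0$. Each $B_j$ says its new color $h_2(B_j)$ when $\alpha=0$, and plays $f_2$ on its $G_2$-neighbourhood when $\alpha=1$.

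Correctness is then a routine check of the four cases for the pair $(\alpha,\beta)$. If $\beta=1$, all $A_i$ announce their new colors, so in the subcase $\alpha=1$ some $A_i$ guesses, and in the subcase $\alpha=0$ all $B_j$ announce their new colors and (since $\beta=1$) some $B_j$ guesses. If $\beta=0$, every $A_i$ plays $f_1$, so the whole of $G_1$ follows $f_1$: in the subcase $\alpha=0$ no new color is worn anywhere on $G_1$, hence $\varphi$ restricted to $V_1$ is a legal arrangement of $\HG_1$ and $f_1$ yields a correct guess in $G_1$; in the subcase $\alpha=1$ every $B_j$ plays $f_2$, so the whole of $G_2$ follows $f_2$ on $\varphi$ restricted to $V_2$, which is a legal arrangement of $\HG_2$ because no $B_j$ wears a new color, and $f_2$ yields a correct guess in $G_2$. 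The only genuinely non-mechanical point is choosing this asymmetry — the $A_i$ reacting to ``$\beta=1$'' while the $B_j$ react to ``$\alpha=0$'' — since it is exactly what makes the four cases split into ``$\HG_1$ wins by $f_1$'', ``$\HG_2$ wins by $f_2$'', ``the $A$-side is correctly new'', ``the $B$-side is correctly new''. Taking $\HG_2$ to be the one-vertex game with a single color recovers Theorem~\ref{thm:addA2}.
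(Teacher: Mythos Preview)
Your proof is correct and follows essentially the same approach as the paper: both introduce one ``new'' color on each $A_i$ and each $B_j$, and use the identical asymmetric rule (each $A_i$ says ``new'' iff it sees a new color among the $B_j$'s, each $B_j$ says ``new'' iff it sees \emph{no} new color among the $A_i$'s, and otherwise they fall back on $f_1$, $f_2$). Your write-up is more careful than the paper's---you spell out the four $(\alpha,\beta)$ cases and note the harmless technicality about $f_1,f_2$ on inputs with new colors---whereas the paper simply says ``it is easy to check that this strategy is winning.''
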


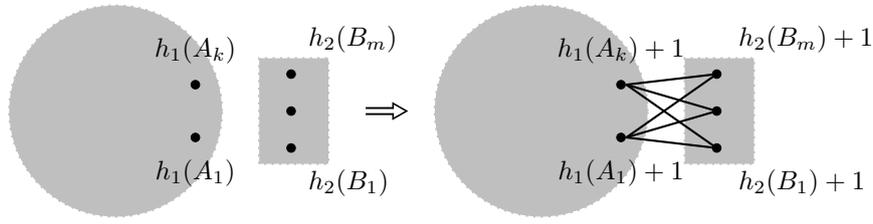
\begin{figure}[h]
 \centering
  \begin{tikzpicture}[scale=.7, every label/.append style={font=\footnotesize}]
    \draw [fill=lightgray!10!,lightgray,dotted,thick] (-1,0) circle (2);
    \draw [fill=lightgray!10!,lightgray,dotted,thick] (1.7,-1) rectangle (3, 1);
		\draw [fill=black] (.5, -0.5) circle (0.08) node [label={below:{$h_1(A_1)$}}] {};
		\draw [fill=black] (.5, 0.5) circle (0.08) node [label={above:{$h_1(A_k)$}}] {};
    \draw [fill=black] (2.3, -0.7) circle (0.08) node [label={-60:{$h_2(B_1)$}}] {};
		\draw [fill=black] (2.3, 0.) circle (0.08) node [] {};
    \draw [fill=black] (2.3, 0.7) circle (0.08) node [label={60:{$h_2(B_m)$}}] {};
    \draw [vecArrow] (3.7, 0) to (4.5, 0);

    \draw [fill=lightgray!10!,lightgray,dotted,thick] (7,0) circle (2);
    \draw [fill=lightgray!10!,lightgray,dotted,thick] (9.7,-1) rectangle (11, 1);
		\draw [fill=black] (8.5, -0.5) circle (0.08) node [label=below:{$h_1(A_1)+1$}] {};
		\draw [fill=black] (8.5, 0.5) circle (0.08) node [label=above:{$h_1(A_k)+1$}] {};
    \draw [fill=black] (10.3, -0.7) circle (0.08) node [label=-45:{$h_2(B_1)+1$}] {};
		\draw [fill=black] (10.3, 0.) circle (0.08) node [] {};
    \draw [fill=black] (10.3, 0.7) circle (0.08) node [label=45:$h_2(B_m)+1$] {};
    \draw [thick] (8.6, -0.5) -- (10.3, 0);
    \draw [thick] (8.6, 0.5) -- (10.3, 0);
		\draw [thick] (8.6, -0.5) -- (10.3, 0.7);
		\draw [thick] (8.6, 0.5) -- (10.3, 0.7);
		\draw [thick] (8.6, -0.5) -- (10.3, -0.7);
		\draw [thick] (8.6, 0.5) -- (10.3, -0.7);
  \end{tikzpicture}
  \caption{Stitching of two graphs, $k=2$, $m=3$.}
  \label{fig:addA2gen}
\end{figure}
\begin{proof}
  One new color has been added for sages $A_i$ and one for sages $B_j$ with
  respect to initial games. Let this color be red. For each $i$ let sage~$A_i$
  say that he has red hat, if he sees at least one red hat on sages~$B_j$, in
  the opposite case let $A_i$ see at his neighbors in $G_1$ only and play in accordance with
  winning strategy on~$G_1$. For each $j$, if sage~$B_j$ sees at least one
  red hat on~$A_i$, then he sees at his neighbors in $G_2$ only and plays in accordance with
  winning strategy on~$G_2$. In the opposite case $B_j$ says that he has
  red hat. It is easy to check that this strategy is winning.
\end{proof}

The next theorem add ``surgical intervention'' to the previous construction: we
will sew together graphs, by joining the neighbors of the two chosen vertices
with hatness 2, and delete both vertices.

\begin{theorem}
  Let $G_1$, $G_2$ be graphs containing vertices $A$ and $B$ respectively,
  the games $\HG_1=\langle G_1, h_1 \rangle$ and $\HG_2=\langle G_2, h_2 \rangle$ be
  winning and $h_1(A)=h_2(B)=2$. Let $N_A$ and~$N_B$ be the sets of neighbors of $A$
  and $B$ in graphs $G_1$ and $G_2$. Consider a new graph $G$
  (fig.~\ref{fig:AB-sewing}):
  \begin{gather*}
    V(G)=(V(G_1)\setminus A) \cup (V(G_2)\setminus B),
    \\
    E(G)=E(G_1)\big|_{V(G_1)\setminus A} \cup \,E(G_2)\big|_{V(G_2)\setminus B}
    \cup \,\{XY \mid X \in N_A, Y \in N_B\}.
  \end{gather*}
  Then the game $\HG=\langle G, h \rangle$ is winning, where
  \[
    h(x) =\begin{cases}
      h_1(x),&x\in V(G_1)\setminus A,\\
      h_2(x),&x\in V(G_2)\setminus B.
    \end{cases}
  \]
\end{theorem}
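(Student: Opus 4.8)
The plan is to reduce the game on $G$ to the two given winning games: the former neighbours of $A$ will jointly simulate the hatness-$2$ vertex $A$ of $G_1$ and the former neighbours of $B$ will simulate $B$, each side reading off the single bit it needs from the hats on the other side (which, after the surgery, it now sees). The first step is a reformulation of winnability tailored to $h_1(A)=2$. \textbf{Claim.} If $\HG_1$ is winning, there is a strategy $\Phi_1$ for the sages of $G_1\setminus A$ together with a function $\rho_1$ from the hat arrangements on $N_A$ to $\{0,1\}$ such that: for every arrangement $\varphi$ on $V(G_1)$ with $\varphi(A)\neq\rho_1(\varphi|_{N_A})$, at least one sage of $G_1\setminus A$ guesses correctly under $\Phi_1$. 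Indeed, fix any winning strategy of $\HG_1$, let $\rho_1$ be the strategy function of sage $A$ (so $A$ says $\rho_1(\varphi|_{N_A})$) and $\Phi_1$ the restriction of the strategy to $G_1\setminus A$; if $\varphi(A)\neq\rho_1(\varphi|_{N_A})$ then $A$ errs, so by the winning property someone in $G_1\setminus A$ is right. The analogous statement holds for $\HG_2$, $B$, $N_B$, giving $\Phi_2$ and $\rho_2$.

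Next I would build the strategy on $G$. In $G$ every vertex of $N_A$ is adjacent to all of $N_B$ and conversely, so each sage of $N_A$ can compute $\rho_2(\varphi|_{N_B})$ and each sage of $N_B$ can compute $\rho_1(\varphi|_{N_A})$ from what he sees. Given an arrangement $\varphi$ on $G$, put $\beta_1:=1-\rho_2(\varphi|_{N_B})$ and $\beta_2:=\rho_1(\varphi|_{N_A})$. Let every sage of $G_1\setminus A$ play $\Phi_1$, with the convention that a sage $X\in N_A$ treats the hat of $A$ as having colour $\beta_1$ and otherwise ignores the hats of $N_B$; symmetrically, every sage of $G_2\setminus B$ plays $\Phi_2$ with $B$'s colour taken to be $\beta_2$. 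Sages outside $N_A\cup N_B$ simply play their original strategies $\Phi_1$, resp. $\Phi_2$, using only their $G_i$-neighbours (which they still see, since $E(G)$ retains $E(G_1)|_{V(G_1)\setminus A}$ and $E(G_2)|_{V(G_2)\setminus B}$).

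For the verification, let $\tilde\varphi_1$ be the arrangement on $G_1$ equal to $\varphi$ on $V(G_1)\setminus A$ with $\tilde\varphi_1(A)=\beta_1$, and let $\tilde\varphi_2$ be the analogous arrangement on $G_2$ with $\tilde\varphi_2(B)=\beta_2$. Every sage of $G_1\setminus A$ sees in $G$ exactly the data he would see in $\tilde\varphi_1$ under $\Phi_1$ (for $X\in N_A$ the ``$A$-slot'' holds $\beta_1=\tilde\varphi_1(A)$), and $\tilde\varphi_1$ agrees with $\varphi$ on $V(G_1)\setminus A$; hence a correct guess of such a sage in $\tilde\varphi_1$ is a correct guess in $G$, and likewise for $G_2$. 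Suppose, for contradiction, that nobody in $G$ is right. Then nobody in $G_1\setminus A$ is right in $\tilde\varphi_1$, so the Claim forces $\beta_1=\rho_1(\tilde\varphi_1|_{N_A})=\rho_1(\varphi|_{N_A})$; similarly $\beta_2=\rho_2(\varphi|_{N_B})$. Combining, $\rho_1(\varphi|_{N_A})=\beta_1=1-\rho_2(\varphi|_{N_B})$ while $\rho_2(\varphi|_{N_B})=\beta_2=\rho_1(\varphi|_{N_A})$, whence $\rho_2(\varphi|_{N_B})=1-\rho_2(\varphi|_{N_B})$, which is impossible. Therefore the sages win.

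The one genuinely delicate point is the asymmetric choice of the two simulated bits: the negation in $\beta_1=1-\rho_2(\cdot)$ against the plain copy $\beta_2=\rho_1(\cdot)$ is precisely what yields the final parity contradiction; a symmetric choice would only give the self-consistent equality $\rho_1(\varphi|_{N_A})=\rho_2(\varphi|_{N_B})$ and the argument would stall. Everything else is bookkeeping that one should nonetheless check: that a former neighbour of $A$ can indeed reconstruct $\rho_2(\varphi|_{N_B})$ (true, since in $G$ it is joined to every vertex of $N_B$), that $G_1$ and $G_2$ may be taken vertex-disjoint so the two simulations do not interfere, and the easy degenerate cases $N_A=\emptyset$ or $N_B=\emptyset$ (in which the corresponding original game is already winning on $G_i\setminus\{A\}$, resp. $G_i\setminus\{B\}$, and the construction degenerates to a disjoint union).
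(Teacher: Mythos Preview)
Your proof is correct and follows essentially the same approach as the paper: the paper also extracts the strategy functions $g_1,g_2$ of $A$ and $B$ (your $\rho_1,\rho_2$) and has $N_A$ simulate $A$'s colour as $g_2$ while $N_B$ simulates $B$'s colour as $\overline{g_1}$, with the verification done as a two-case split on whether $g_1(c_1)=g_2(c_2)$ rather than by contradiction. The only cosmetic difference is that you place the negation on the $G_1$-side rather than the $G_2$-side, which is immaterial by symmetry.
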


\begin{figure}[h]%
\begin{center}
\setlength{\unitlength}{400bp}\footnotesize
 \begin{picture}(1,0.2660933)%
    \put(0,0){\includegraphics[width=\unitlength,page=1]{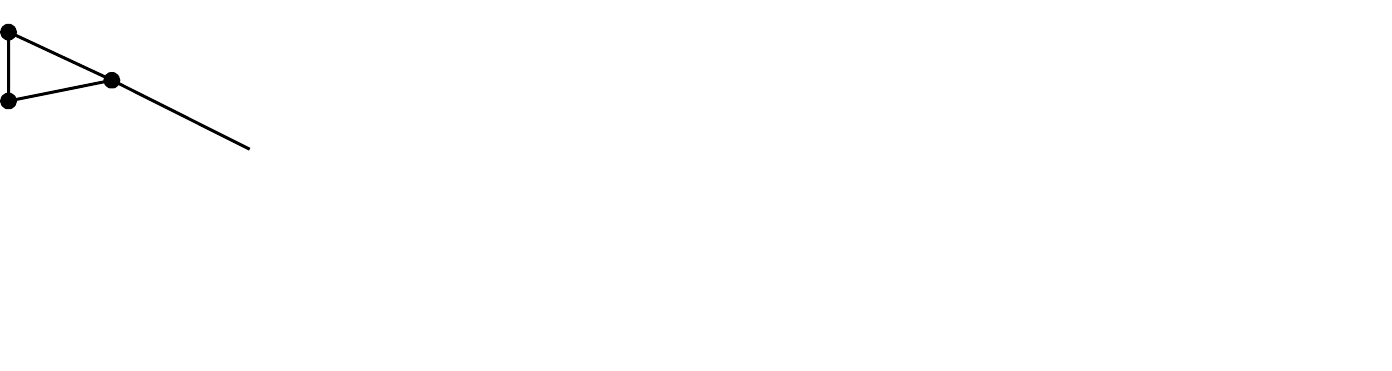}}%
    \put(0.18933233,0.13902369){$B$}%
    \put(0.17447552,0.17368627){2}%
    \put(0.0853397,0.22320809){$Y_1$}%
    \put(0,0){\includegraphics[width=\unitlength,page=2]{shivaem.pdf}}%
    \put(0.08038578,0.08){$Y_2$}%
    \put(0.05067469,0.00531748){Graph $G_2$}%
    \put(0,0){\includegraphics[width=\unitlength,page=3]{shivaem.pdf}}%
    \put(0.30191757,0.13902369){{\smash{\textit{A}}}}%
    \put(0.31677188,0.17368627){{\smash{2}}}%
    \put(0.36134224,0.24796772){$X_3$}%
    \put(0,0){\includegraphics[width=\unitlength,page=4]{shivaem.pdf}}%
    \put(0.3662937,0.17368627){$X_2$}%
    \put(0,0){\includegraphics[width=\unitlength,page=5]{shivaem.pdf}}%
    \put(0.38115051,0.05979076){$X_1$}%
    \put(0.38115051,0.00531748){{\smash{Graph $G_1$}}}%
    \put(0,0){\includegraphics[width=\unitlength,page=6]{shivaem.pdf}}%
    \put(0.85028149,0.24796772){$X_3$}%
    \put(0,0){\includegraphics[width=\unitlength,page=7]{shivaem.pdf}}%
    \put(0.85523288,0.17368627){$X_2$}%
    \put(0,0){\includegraphics[width=\unitlength,page=8]{shivaem.pdf}}%
    \put(0.87008975,0.05979076){$X_1$}%
    \put(0,0){\includegraphics[width=\unitlength,page=9]{shivaem.pdf}}%
    \put(0.72648069,0.22320809){$Y_1$}%
    \put(0,0){\includegraphics[width=\unitlength,page=10]{shivaem.pdf}}%
    \put(0.72152923,0.07959902){$Y_2$}%
    \put(0,0){\includegraphics[width=\unitlength,page=11]{shivaem.pdf}}%
    \put(0.77104862,0.00531748){{\smash{Graph $G$}}}%
	\end{picture}%

\caption{Sewing graphs by joining neighbors of vertices $A$ and $B$.}
\label{fig:AB-sewing}
\end{center}
\end{figure}

\begin{proof}
  Let $f_1$ and $f_2$ be winning strategies in games $\HG_1$ and $\HG_2$.
  Let us construct a winning strategy for game $\HG$.

  Let $c_1$ be an arbitrary hat arrangement on $N_A$.  To this arranement, we 
  associate a color $g_1(c_1)$ which is the guess of sage $A$ in accordance with
  strategy $f_1$ for this arrangement. Analogously, to each
  arrangement $c_2$ on $N_B$, we associate a color $g_2(c_2)$. All the sages in
  $N_A$ can determine $g_2(c_2)$ and all the sages in $N_B$ can determine $g_1(c_1)$.

  The winning strategy is as follows. The sages on $V(G)\setminus (N_A\cup N_B)$
  use their initial strategies $f_1$ and $f_2$. The sages on $N_A$ also use
  strategy $f_1$ if the hat color of sage $A$ is $g_2(c_2)$. The sages on $N_B$
  use strategy $f_2$ if the hat color of sage $B$ is not $g_1(c_1)$; denote
  this color by $\overline{g_1}(c_1)$  (recall that $h(B)=2$).

  Why this strategy is winning? If $g_1(c_1)=g_2(c_2)$, then on $G_2$-part of
  the new graph we have game  $\HG_2$, where the color of sage $B$ is
  $\overline{g_1}(c_1)=\overline{g_2}(c_2)$, and all the sages use strategy $f_2$ (the guess of $B$ is not determined). But $B$ guesses wrong in game $\HG_2$ with the
  hat arrangement under consideration, and $f_2$ is the winning strategy in game
  $\HG_2$. Therefore someone in $V(G_2)\setminus B$ guesses correctly.

  Analogously, if $g_1(c_1)\neq g_2(c_2)$, we have the game $\HG_1$, where the color
  of $A$ is ${g_2(c_2)}=\overline{g_1}(c_1)$, all sages use strategy $f_1$ and $A$
  says nothing. But $A$ guesses wrong in accordance with strategy $f_1$, and $f_1$ is
  winning strategy in game $\HG_1$. Therefore, someone in $V(G_1)\setminus A$
  guesses correctly.
\end{proof}

The following constructor allows to fasten several graphs $G_i$ by marking 
several vertices in each graph with the names as in graph $G$ and joining them
together in the same way as the corresponding vertices are joined in $G$.

\begin{theorem}
  Let a game $\langle G, h \rangle$ be winning, where $V(G)=\{A_1, A_2, \ldots,
  A_k\}$. Let $G_i$ be a graph $(i=1,\dots, k)$,  $B_{ij}\in V(G_i)$ be set of
  marked vertices in $G_i$  (we assume that sets $V(G_i)$ are disjoint, the
  numbers of marked vertices in different graphs are not necessarily the same, see
  fig.~\ref{fig10}), and let the games $\langle G_i, h_i\rangle$ be winning. Consider a
  new graph $G'=\langle{ V(G'), E(G') }\rangle$, where
  $$
  V(G')=V(G_1)\cup\ldots \cup V(G_k), \quad
  E(G')=E(G_1)\cup\ldots\cup E(G_k)\cup \{B_{i_1j_1}B_{i_2j_2}\mid
  A_{i_1}A_{i_2}\in E(G)\}.
  $$
  Then the game $\langle G', h'\rangle$ is winning, where
  $$
  h'(x)=\begin{cases}
    h_i(x),&  \text{if $x$ belongs to one of the sets  \ }
    V(G_i)\setminus\{B_{i1},B_{i2},\ldots\}, \\
    h_i(x)+h(A_i)-1, & \text{if $x$ coincides with vertex $B_{ij}$}.
  \end{cases}
  $$
\end{theorem}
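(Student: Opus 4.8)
My plan is to simulate the winning strategy $f$ of the small game $\langle G,h\rangle$ ``at the level of blocks'', running a fixed winning strategy $f_i$ of $\langle G_i,h_i\rangle$ inside block~$G_i$. For each $i$ I identify the $h(A_i)-1$ extra colours of a marked vertex $B_{ij}$ (those beyond its $h_i(B_{ij})$ original colours) with the nonzero colours $1,\dots,h(A_i)-1$ of $A_i$; call $B_{ij}$ \emph{plain} if its hat is an original colour and \emph{fancy with value $c$} if its hat is the extra colour labelled~$c$.

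First I would attach to every block $G_i$ a \emph{presented colour} $c_i\in\{0,\dots,h(A_i)-1\}$: put $c_i=0$ if every marked vertex of $G_i$ is plain, and otherwise let $c_i$ be the least value $c$ for which some marked vertex of $G_i$ is fancy with value~$c$. The point of the complete bipartite joins is that $c_i$ depends only on the hats on the marked vertices of $G_i$, so every marked vertex of every block $G_{i'}$ with $A_{i'}A_i\in E(G)$ can read $c_i$ off. Consequently each marked vertex of block $i$ can compute the colour $\widetilde c_i$ that $f$ would make $A_i$ announce when $A_i$'s $G$-neighbours wear the colours $(c_{i'})$; all marked vertices of block $i$ get the same $\widetilde c_i$.

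The strategy on $G'$ is then: a non-marked vertex of $G_i$ plays $f_i$ on its $G_i$-neighbours (reading the hat of a fancy neighbour, in the rare case one occurs, as $0$); a marked vertex $B_{ij}$ computes $\widetilde c_i$ and, if $\widetilde c_i=0$, plays $f_i$ on its $G_i$-neighbours, while if $\widetilde c_i\ge 1$ it simply announces the extra colour~$\widetilde c_i$. To see the sages win, note that $(c_1,\dots,c_k)$ is a legal hat arrangement for $\langle G,h\rangle$, so under $f$ some $A_{i_0}$ guesses correctly, i.e.\ $\widetilde c_{i_0}=c_{i_0}$. If $c_{i_0}=0$, then every marked vertex of $G_{i_0}$ is plain and, since $\widetilde c_{i_0}=0$, \emph{every} vertex of $G_{i_0}$ (marked and not) is playing $f_{i_0}$ on an honest, in-range arrangement of $\langle G_{i_0},h_{i_0}\rangle$, so someone in $G_{i_0}$ is correct. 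If $c_{i_0}=c\ge 1$, then by the choice of $c_{i_0}$ some marked vertex of $G_{i_0}$ is fancy with value~$c$, and since it announces $\widetilde c_{i_0}=c$ it names its own hat correctly.

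The delicate point --- the main obstacle --- is that the adversary may put different extra colours on different marked vertices of one block, so a block need not honestly present a single meta-colour. Defining $c_i$ as the \emph{least} fancy value (and $0$ only when no marked vertex is fancy) is exactly what reconciles the two halves of the construction: when a block must ``behave as $A_{i_0}$ with colour $0$'' all its marked vertices are automatically plain, so the whole block can run its own game faithfully; and when it must ``behave as $A_{i_0}$ with colour $c\ge1$'' at least one marked vertex genuinely wears the colour $c$ it is instructed to announce. The rest is routine bookkeeping about visibility (a non-marked vertex sees only its $G_i$-neighbours, while a marked vertex of $G_i$ sees every marked vertex of every block joined to $A_i$) and about the ranges of the colour labels. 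With $G=P_2$ and $h\equiv2$ this specializes to Theorem~\ref{thm:addA2gen}.
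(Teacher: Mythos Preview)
Your proof is correct and follows essentially the same approach as the paper: define a ``meta-colour'' for each block from its marked vertices (you take the least fancy value, the paper takes the maximum new colour --- either works), have the marked vertices simulate the $\langle G,h\rangle$-strategy $f$ at the block level, and run $f_i$ inside a block when the meta-guess is~$0$. The only other cosmetic difference is that when a sage playing $f_i$ sees an out-of-range colour you stipulate ``read it as $0$'' while the paper says ``answer arbitrarily''; neither case is ever relevant in the winning block~$i_0$.
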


\begin{figure}[h]%
\setlength{\unitlength}{212bp}\footnotesize
\begin{center}
\footnotesize
 \begin{picture}(1,0.79517469)%
    \put(0,0){\includegraphics[width=\unitlength,page=1]{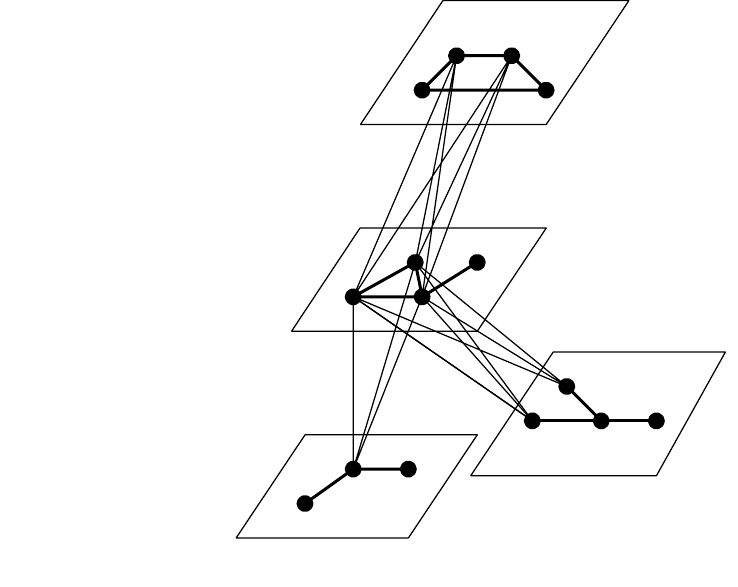}}%
    \put(0.6,0.094){$G_1$}%
    \put(0.94554664,0.21544704){$G_2$}%
    \put(0.693441,0.38351428){$G_3$}%
    \put(0.78681546,0.663628){$G_4$}%
    \put(0.46,0.10340056){$B_{11}$}%
    \put(0.68410505,0.16875751){$B_{21}$}%
    \put(0.78681546,0.27146792){$B_{22}$}%
    \put(0.5907353,0.74766168){$B_{41}$}%
    \put(0.70278178,0.74766168){$B_{42}$}%
    \put(0,0){\includegraphics[width=\unitlength,page=2]{skrepl.pdf}}%
    \put(0.0025,0.01){Graph $G$}%
    \put(0.0025,0.11){$A_1$}%
    \put(0.10520079,0.21544704){$A_2$}%
    \put(0.00249511,0.39285035){$A_3$}%
    \put(0.0025,0.673){$A_4$}%
    \put(0.42,0.36){$B_{31}$}%
    \put(0.475,0.45){$B_{32}$}%
    \put(0.58,0.36){$B_{33}$}%
  \end{picture}%
\caption{Fastening several graphs with the help of graph $G$.}
\label{fig10}
\end{center}

\kern-3mm
\end{figure}

\begin{proof}
  Let  $f$, $f_1$, \dots, $f_k$ be winning strategies for games $\langle
  G,h\rangle$, $\langle G_1,h_1 \rangle$, \dots, $\langle G_k,h_k\rangle$
  respectively. We may assume that sage $B_{ij}$ has hats of $h_i(B_{ij})$ old colors
  and $h(A_{i})-1$ new colors. A \emph{megasage} $M_i$ is a set of sages
  $\{B_{ij}\mid j=1,2,\ldots \}$. Define the color of hat of megasage as a number from 0 to
  $h(A_{i})-1$ as follows. We set it to be 0 if all hat colors of
  sages $B_{i1},B_{i2},\ldots$ are old; otherwise, we set it to be equal the
  maximum new color number of the hats  $B_{i1},B_{i2},\ldots$.

  Now each megasage (i.\,e.\ each sage in this set) understands which hat colors
  his neighboring megasages in graph $G$ have. Thus the megasages can use
  strategy $f$: if a megasage has to say a~new color, let all sages forming this
  magasage say this new color; if megasage $M_i$ has to say 0, then let sages
  $B_{i1},B_{i2},\ldots$  use strategy $f_i$ (looking at the neighbors in the
  component $G_i$ only). Let the sages in
  $V(G_i)\setminus\{B_{i1},B_{i2},\ldots\}$ also use strategy $f_i$. To make our
  strategy well-defined, we append the following rule: if a sage is assigned to play
  strategy $f_i$ but he sees new color on the hats of his neighbors, then his guess
  is not defined by the strategy, and we allow him to say an arbitrary guess.

  Since strategy $f$ is winning, one of the megasages, say $M_{i_0}$, guesses correctly. If
  his color is new then all sages $B_{i_0j}$ $(j=1,2,\ldots)$ say this color and
  one of them certainly guesses correctly. If $M_{i_0}$ has color~0, then sages
  $B_{i_01},B_{i2},\ldots$ use strategy $f_i$
  and the other sages in $V(G_{i_0)})\setminus\{B_{i_01},B_{i_0)2},\ldots\}$
  use strategy $f_i$ too. Thus someone in $V(G_i)$ guesses correctly.
\end{proof}

We note that if only one vertex $B_i$ is marked in each component
$G_i$, then the game on graph $G'$ remains winning even if we greatly
increase values of hatnesses $h(B_i)$. The following lemma holds.

\begin{lemma}
  Let a game $\langle G, h'\rangle$ be winning, $V(G)=\{A_1, \dots, A_n\}$. Take
  $n$ winning games $\langle G_i, h_i\rangle$ such that the sets $V(G_i)$ are
  disjoint, and mark one vertex $A_i$ in each graph $G_i$. Join the marked
  vertices as in graph $G$. Define the hat function in the obtained graph as
  $h(A_i) = h'(A_i)h_i(A_i)$ (all other vertices have the same hatness as in the
  initial graphs). Then the sages win.
\end{lemma}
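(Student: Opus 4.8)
The plan is to mimic the proof of the product theorem (Theorem~\ref{thm:multiplication}): give the marked vertex in each component a \emph{composite} color, one coordinate of which lives in the game $\langle G, h'\rangle$ and the other in the game $\langle G_i, h_i\rangle$. Fix a winning strategy $f$ for $\langle G, h'\rangle$ and winning strategies $f_1,\dots,f_n$ for $\langle G_1,h_1\rangle,\dots,\langle G_n,h_n\rangle$. Since $h(A_i)=h'(A_i)h_i(A_i)$, the hat color of the marked sage $A_i$ in $G'$ can be written as a pair $(c_i,d_i)$ with $0\le c_i\le h'(A_i)-1$ and $0\le d_i\le h_i(A_i)-1$.

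Now describe the collective strategy on $G'$. A sage $v\in V(G_i)\setminus\{A_i\}$ has, by construction of $G'$, exactly the same neighbors as in $G_i$ (the only new edges of $G'$ join marked vertices); let him play strategy $f_i$, reading off only the second coordinate $d_i$ of $A_i$'s color whenever $A_i$ is among his neighbors. The marked sage $A_i$ sees, among his neighbors in $G'$, all the marked sages $A_j$ with $A_iA_j\in E(G)$ (together with their full composite colors) and all his neighbors inside $G_i$. He announces the composite color $(c_i^{*},d_i^{*})$, where $c_i^{*}$ is the guess of $f$ for sage $A_i$ computed from the first coordinates $c_j$ of the visible $A_j$'s, and $d_i^{*}$ is the guess of $f_i$ for sage $A_i$ computed from the colors of his neighbors inside $G_i$.

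To see that this strategy wins, fix a hat arrangement on $G'$. The first coordinates $(c_1,\dots,c_n)$ constitute a legitimate hat arrangement for the game $\langle G, h'\rangle$, and each $A_i$ computes exactly the $f$-guess it would make there; since $f$ is winning, some marked sage $A_{i_0}$ has $c_{i_0}^{*}=c_{i_0}$. Consider now the induced arrangement on $G_{i_0}$: sage $A_{i_0}$ wears color $d_{i_0}$ and the remaining sages of $G_{i_0}$ wear their own colors. Every sage of $G_{i_0}$, including $A_{i_0}$ (who computes $d_{i_0}^{*}$ by $f_{i_0}$), follows strategy $f_{i_0}$; since $f_{i_0}$ is winning, some sage of $G_{i_0}$ guesses correctly. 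If it is a sage $v\in V(G_{i_0})\setminus\{A_{i_0}\}$, then, his hatness and neighborhood in $G'$ agreeing with those in $G_{i_0}$, he guesses his color in $G'$ correctly. If it is $A_{i_0}$ himself, then $d_{i_0}^{*}=d_{i_0}$, and together with $c_{i_0}^{*}=c_{i_0}$ this means $A_{i_0}$ announced his full composite color $(c_{i_0},d_{i_0})$ correctly. Either way the sages win.

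There is no serious obstacle here: the argument is a ``parallel'' application of the product construction, one copy for each vertex of $G$, glued according to the edges of $G$. The only points that need care are bookkeeping ones --- that no unmarked vertex acquires a new neighbor in $G'$, so its strategy $f_i$ remains well defined, and that ``guesses correctly in the subgame $\langle G_{i_0},h_{i_0}\rangle$'' really translates into ``guesses correctly in $\langle G',h\rangle$'', which holds because the hatnesses and neighborhoods of the non-marked vertices of $G_{i_0}$ are unchanged, and because the correctness of a single coordinate of the marked sage's composite guess, combined with the correctness of the other coordinate established in the previous step, is the correctness of his whole guess.
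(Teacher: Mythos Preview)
Your proof is correct and is essentially the same as the paper's: the paper simply states that the lemma follows from Theorem~\ref{thm:multiplication} on game products (applied iteratively, once at each vertex $A_i$), and your argument is precisely the unfolded version of that iteration, with the same composite-color construction and the same two-step verification.
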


Lemma follows from theorem \ref{thm:multiplication} on a game product.

The next constructor combines ideas of theorem~ \ref{thm:multiplication} on game
product and theorem~\ref{thm:substitution} on substitution in a single monster.
We glue several winning graphs with common vertex $O$ and create a copy of graph
$G$ by joining neighbors of $O$. For suitable hat function the obtained graph is
winning.

Recall that for $h(A)=mn$, we may consider hat color of $A$ as pair $(c_1,
c_2)$, where $0\leq c_1\leq m-1$, $0\leq c_2\leq n-1$; we say in this case that
color of sage $A$'s hat is composite.

\begin{figure}[h]%
\setlength{\unitlength}{308bp}\footnotesize
\begin{center}
\footnotesize
\begin{picture}(1,0.53847929)%
    \put(0,0){\includegraphics[width=\unitlength,page=1]{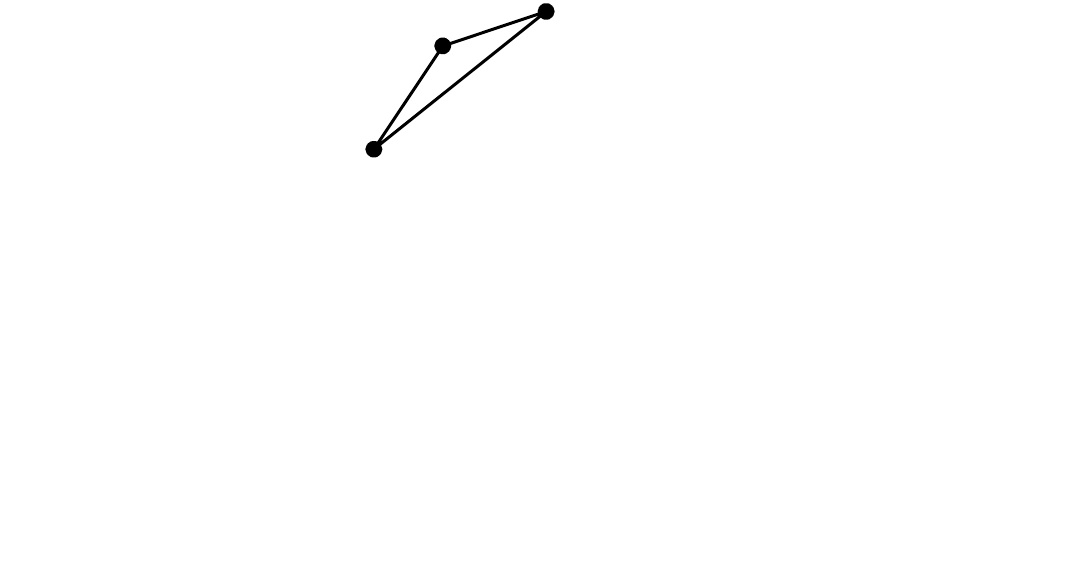}}%
    \put(0.31039658,0.38632374){$O$}%
    \put(0.4,0.52){$A_3$}%
    \put(0,0){\includegraphics[width=\unitlength,page=2]{konus.pdf}}%
    \put(0.31039658,0.2898633){$O$}%
    \put(0.467,0.328){$A_2$}%
    \put(0,0){\includegraphics[width=\unitlength,page=3]{konus.pdf}}%
    \put(0.31039658,0.19339962){$O$}%
    \put(0.41,0.08){$A_1$}%
    \put(0,0){\includegraphics[width=\unitlength,page=4]{konus.pdf}}%
    \put(0.00171847,0.51494098){$A_3$}%
    \put(0.03387197,0.08407585){$A_1$}%
    \put(0.09817893,0.32201678){$A_2$}%
    \put(0.00171847,0){Graph $G$}%
    \put(0.3232599,0){Graphs $G_1$, $G_2$, $G_3$}%
    \put(0.83,0){``Cone''}%
    \put(0,0){\includegraphics[width=\unitlength,page=5]{konus.pdf}}%
    \put(0.72840163,0.2898633){$O$}%
    \put(0.84,0.17569433){$A_1$}%
    \put(0,0){\includegraphics[width=\unitlength,page=6]{konus.pdf}}%
    \put(0.88274227,0.32844678){$A_2$}%
    \put(0,0){\includegraphics[width=\unitlength,page=7]{konus.pdf}}%
    \put(0.80557191,0.41848047){$A_3$}%
  \end{picture}%
\caption{``Cone'' over graph $G$.}\label{fig:konus}
\end{center}
\end{figure}

\begin{theorem}[On a ``cone'' with vertex $O$ over graph $G$]\label{thm:konus}
  Let a game $\HG=\langle G, h \rangle$, where $V(G)=\{A_1, A_2, \ldots, A_k\}$
  and $k$ games $\HG_i=\langle G_i, h_i\rangle$, $1\leq i\leq k$, be winning, and the
  sets $V(G_i)$ be disjoint. Assume that in each $G_i$ one vertex is labeled $O$
  so that $h_1(O)=h_2(O)=\ldots=h_k(O)$, and also one of the neighbors $A_i$ of $O$ is labeled. Let  $G'=\langle{V(G'),  E(G')}\rangle$ be a new graph, 
  \begin{align*}
    V(G')&=(V(G_1)\setminus\{O\})\cup\ldots \cup (V(G_k)\setminus\{O\})\cup\{O\},
    \\
    E(G')&=E(G_1)\cup\ldots\cup E(G_k)\cup \{A_i A_j\mid A_{i}A_{j}\in E(G)\}.
  \end{align*}
  Then the game $\langle G', h'\rangle$ is winning, where
  $$
  h'(x)=\begin{cases}
    h_i(x),         & \text{if $x$ belongs to one of sets  \ } V(G_i)\setminus\{A_i\}, \\
    h_i(A_i)h(A_i), & \text{if $x$ coincides with $A_i$}.
  \end{cases}
  $$
\end{theorem}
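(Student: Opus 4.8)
The plan is to blend the composite-color device of Theorem~\ref{thm:multiplication} with a delegation trick at the apex $O$ reminiscent of Theorem~\ref{thm:substitution}. First I would fix winning strategies $f$ for $\HG$ and $f_i$ for each $\HG_i$, and give each labeled vertex $A_i$ a composite hat color $(c_1^{(i)}, c_2^{(i)})$ with $0 \le c_1^{(i)} \le h_i(A_i)-1$ and $0 \le c_2^{(i)} \le h(A_i)-1$; here $c_1^{(i)}$ plays the role of $A_i$'s color in $\HG_i$ and $c_2^{(i)}$ the role of $A_i$'s color in the top game $\HG$. Every other vertex of $G_i$, including the shared vertex $O$ (whose hatness is the common value $h_i(O)$), keeps its original color set.

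The strategy on $G'$ I would propose is: each $v \in V(G_i)\setminus\{A_i, O\}$ plays $f_i(v)$, reading its $G_i$-neighbors and extracting the first coordinate of $A_i$'s color if $A_i$ is among them; sage $A_i$ computes the first coordinate of its guess from its $G_i$-neighbors via $f_i(A_i)$ and, independently, the second coordinate by feeding the $c_2$-coordinates of its $G$-neighbors $A_j$ (the edges $A_iA_j$ are exactly those newly added to $G'$) into $f(A_i)$, then announces the composite guess. The delicate vertex is $O$: in $G'$ it is adjacent to all of its neighbors from every $G_i$, so in particular it sees all of $A_1,\dots,A_k$, hence knows the entire $c_2$-arrangement and — using the public strategy $f$ together with the fixed adjacency of $G$ — the second-coordinate guess of each $A_i$. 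Sage $O$ therefore computes the smallest index $i_0$ for which $A_{i_0}$'s second-coordinate guess is correct (such $i_0$ always exists because $f$ wins $\HG$) and then plays $f_{i_0}(O)$ on its $G_{i_0}$-neighbors.

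To verify that this wins I would argue by contradiction. Suppose some arrangement makes every sage in $G'$ wrong, and let $i_0$ be the index $O$ selects. Then $A_{i_0}$'s second coordinate is correct, so its first coordinate must be wrong; in other words, in the $\HG_{i_0}$-arrangement where $A_{i_0}$ holds color $c_1^{(i_0)}$ and all other vertices of $G_{i_0}$ keep their actual $G'$-colors, sage $A_{i_0}$ (playing $f_{i_0}(A_{i_0})$) guesses wrongly. But in that same $\HG_{i_0}$-arrangement every vertex of $G_{i_0}$ — the ordinary ones, $A_{i_0}$, and $O$ (playing $f_{i_0}(O)$) — reads exactly the neighbor-colors it reads in $G'$ and hence makes exactly its $G'$-guess, which is wrong. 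This contradicts $f_{i_0}$ being winning for $\HG_{i_0}$, so no such arrangement exists.

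I expect the main obstacle to be exactly the clash at $O$: naively $O$ would have to obey $f_i(O)$ for all $i$ simultaneously. The resolution — and the point that makes the hypotheses on $O$ and the multiplied hatnesses $h_i(A_i)h(A_i)$ exactly the right ones — is that $O$ can observe all the $A_i$'s, let the top game $\HG$ pick out the single block $\HG_{i_0}$ where $O$'s vote is needed, and delegate itself there; the composite colors guarantee that this same $i_0$ is consistent with what each $A_i$ does locally. The only routine checks are that $G'$ introduces no stray edges at ordinary vertices of $G_i$ or at $O$ (so the $G'$-guesses genuinely coincide with the component-game guesses) and that $O \notin V(G)$, which is automatic since $A_i$ is a genuine neighbor of $O$ in $G_i$.
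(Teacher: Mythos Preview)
Your proposal is correct and follows essentially the same approach as the paper: composite colors at the $A_i$, ordinary vertices of each $G_i$ play $f_i$, and the apex $O$ acts as a dispatcher --- using its view of all the $A_i$ (and hence of the full $c_2$-arrangement) to single out the index $i_0$ for which $A_{i_0}$'s $\HG$-guess is correct, then committing to $f_{i_0}(O)$. The paper's proof is terser and omits the explicit contradiction argument, but the strategy and its verification are the same.
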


\begin{proof}
  Let us describe a winning strategy. The sages in vertices $A_i$ have composite
  colors, let they play two strategies simultaneously: strategy of game $\HG_i$
  for the first coordinate of the color and strategy of $\HG$ for the second
  one. The sages in  $V(G_i)\setminus\{O, A_i\}$ use the strategy of game
  $\HG_i$ (neighbors of~$A_i$ pay attention only to $G_i$-coordinate of his color).

 Tthe most cunning role goes to sage $O$. He sees all the sages $A_i$ and knows, which
  sage guesses the $G$-coordinate of his color correctly, let this sage be
  $A_j$. Then $O$ looks only at his neighbors in graph $G_j$ and use strategy
  $h_j$ (he looks at the $G_j$-coordinate of $A_j$'s hat color only).

  As a result, someone on graph $G_j$ guesses correctly (if this sage is $A_j$, then
  he guesses both components of his composite color).
\end{proof}

An \,e\,x\,a\,m\,p\,l\,e \,of winning graph obtained by theorem is shown in
fig.~\ref{fig:primer-konus}.
Here, graph $G$ is a complete graph on 3 vertices with hatnesses 3, 3, 3; $G_1$ is a
5-cycle with hatnesses  4, 2, 3, 3, 3; the graphs $G_2$ and $G_3$ are 4-cycles with
hatnesses 4, 2, 3, 3. The latter three graphs are winning by
corollary~\ref{cor:cycle_233}. $O$~is vertex of hatness 4 in these cycles.

\begin{figure}[h]%
\setlength{\unitlength}{140bp}\footnotesize
\begin{center}
\footnotesize
\begin{picture}(1,1.0567848)%
    \put(0,0){\includegraphics[width=\unitlength,page=1]{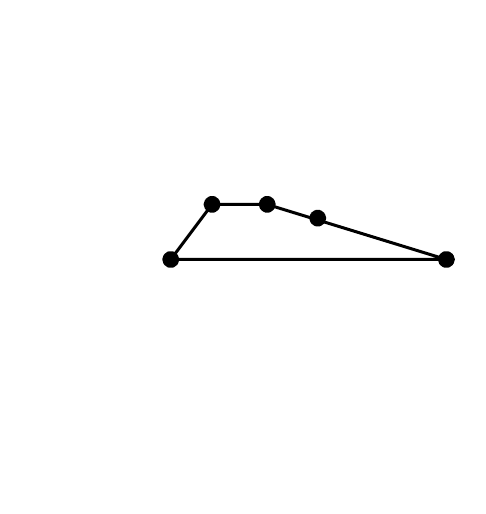}}%
    \put(0.35173995,0.45135026){$O$}%
    \put(0.33754971,0.56488024){4}%
    \put(0.91942565,0.45135026){$A_1$}%
    \put(0.91942565,0.55069){6}%
    \put(0.66,0.53){3}%
		\put(0.4,0.67){3}%
		\put(0.5,0.67){3}%
    \put(0,0){\includegraphics[width=\unitlength,page=2]{primer7.pdf}}%
    \put(0.09628492,1.00483856){$A_2$}%
    \put(-0.0030618,0.9480699){6}%
    \put(0.1,0.65){3}%
    \put(0.15,0.55){3}%
    \put(0,0){\includegraphics[width=\unitlength,page=3]{primer7.pdf}}%
    \put(0.09628492,0.35200354){3}%
    \put(0.15,0.45){3}%
    \put(0.09,0.01){$A_3$}%
    \put(-0.0030618,0.03977278){6}%
    \put(0,0){\includegraphics[width=\unitlength,page=4]{primer7.pdf}}%
  \end{picture}%
\caption{The sages win by theorem \ref{thm:konus}.}\label{fig:primer-konus}
\end{center}
\end{figure}

\medbreak
In the proof of theorem \ref{thm:konus}, sage $O$ plays the role of ``dispatcher'':
looking at sages $A_i$, he chooses in which component the winning game is played.
The success of this choice is provided by his knowledge of the winner in the
graph $G$. Now we consider a case in which sage $O$ cannot see the entire
graph $G$.

Let us fix a graph $G$, a hat function and an arbitrary strategy of sages on $G$. 
A set $S\subset V(G)$ is said to be \emph{predictable} if it satisfies the property: for
any hat arrangement on graph $G$, one can choose a sage $A\in S$ looking at the hat colors in the set $S$ only, and so that if for this hat arrangement some sages in
$S$ guess their colors correctly, then $A$ is one of these ``winners''.

An example of predictable set $S$ is a 5-clique in the  graph ``Big bow'' for the
strategy from theorem~\ref{thm:bantik} (fig.~\ref{fig:big-bow}). Indeed, the
strategy in the proof of theorem ~\ref{thm:bantik} for the sages on any of 5-cliques
consists of checking some hypotheses about the sums of colors over vertices of the
clique. Anyone who sees the hat arrangement only on the clique can
determine who of the non-central sages guesses correctly. If nobody guesses, then
the only person who can guess correctly in this set of vertices is the central
sage.

A simpler examples of predictable set is the component $G_1$ or $G_2$ in
theorem~\ref{thm:multiplication} on graph products (for the strategies from the
proof). For example, graph $G$ depicted in  fig.~\ref{fig:konus-monstr}, left,
with all hatnesses equal 4, is the product of two 3-cliques $S_1$ and $S_2$ with
hatnesses 2, 4, 4. Hence, sets $S_1$ and $S_2$ are predictable.

Returning to ``cone'' theorem we note that the construction of a new graph can be
generalized to the case of several dispatchers, that collectively see the entire
graph $G$ but individually each dispatcher sees some predictable part of the
graph $G$ only. Let us describe this generalization more precisely.

Let $\HG=\langle G, h \rangle$ be a winning game, where $V(G)=\{A_1, A_2, \ldots,
A_k\}$. Fix a winning strategy~$f$ in this game. Let the vertex set of $G$ be
a union of several predictable sets with respect to~$f$:
$V(G)=S_1\cup S_2\cup\ldots\cup S_m$. Sets  $S_j$ can intersect; if a
vertex $A_\ell$ belongs to several~$S_j$, we consider one of them (any) as
``principal'' for $A_\ell$, the number of this set is denoted by $j_\ell$. For instance, one can take
$j_\ell=\min\{j: A_\ell\in S_j\}$. Assume that none of~$S_j$ is a subset of the
union of the other sets. Let for each~$\ell$, $1\leq \ell\leq k$, a winning game
$\HG_\ell=\langle G_{\ell}, h_{\ell}\rangle$ is given (the sets $V(G_\ell)$ are
pairwise disjoint), in each graph $G_{\ell}$ one arbitrary vertex
$A_\ell$ is marked and one more vertex, neighboring to~$A_\ell$, is labeled
as~$O_{j_\ell}$. Thus, some vertices in different graphs can be labeled with the same
label $O_j$. We assume that for each $j$, $1\leq j\leq m$, the hatnesses of all
vertices~$O_j$ are equal, denote this value by~$o_j$.

Consider a graph $G'=(V(G'), E(G'))$, where $V(G')=\bigcup\limits_{\ell=1}^{k}
V(G_{\ell})$. We assume that the vertices with the same labels (i.\,e.\ different
copies of vertices $O_j$) are identified in this union,
$$
E(G')=E(G_1)\cup\ldots\cup E(G_k)\cup E(G)\cup \{O_j A\mid 1\leq j\leq m, \ A\in S_j\}.
$$
The latter set in the union provides the ability of sages $O_j$ to see all
vertices of set $S_j$ including those vertices, for which index $j$ is not principal.
We consider the union formally: if a set $E(G_\ell)$ contains edge  $A_\ell
O_{j_\ell}$, then $E(G')$ is also contains edge $A_\ell O_{j_\ell}$, since there are
vertices denoted by  $A_\ell$ and $O_{j_\ell}$ in graph $G'$.

An example is given in fig.~~\ref{fig:konus-monstr}. Here we have $j_1=j_2=j_3=1$, $j_4=j_5=2$. Graph $G$ is a ``small bow'', we have checked above that sets $S_1$ and $S_2$ are predictable.

Define hat function  $h'$ on graph $G'$
$$
h'(x)=\begin{cases}
  h_{\ell}(x),     & \text{if $x$ belongs to one of the sets \ } V(G_{\ell})\setminus\{O_{j_\ell}, A_\ell\},\\
  o_j,             & \text{if $x$ coincides with $O_j$}, \\
  h_{\ell}(A_\ell)h(A_\ell), & \text{if $x$ coincides with  $A_\ell$}.
\end{cases}
$$

\begin{figure}[h]%
\setlength{\unitlength}{440bp}\footnotesize
\begin{center}
\begin{picture}(1,0.38490514)%
    \put(0,0){\includegraphics[width=\unitlength,page=1]{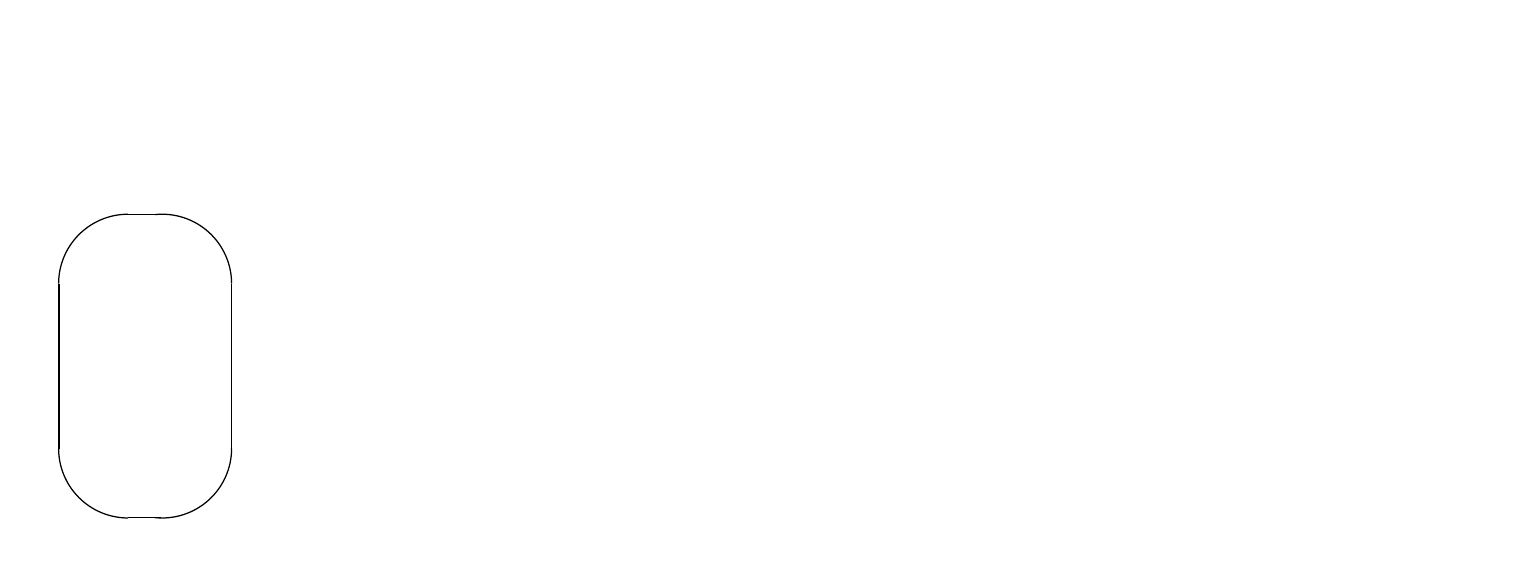}}%
    \put(0.00889269,0.10413763){$S_2$}%
    \put(0,0){\includegraphics[width=\unitlength,page=2]{dva-disp.pdf}}%
    \put(-0.00013278,0.28466949){$S_1$}%
    \put(0,0){\includegraphics[width=\unitlength,page=3]{dva-disp.pdf}}%
    \put(0.08561811,0.20343124){$A_1$}%
    \put(0.05402675,0.36139488){$A_3$}%
    \put(0.07659266,0.05900584){$A_5$}%
    \put(0.12172447,0.27112903){$A_2$}%
    \put(0.12172447,0.13573134){$A_4$}%
    \put(0.05402675,0.00484634){Graph $G$}%
    \put(0,0){\includegraphics[width=\unitlength,page=4]{dva-disp.pdf}}%
    \put(0.23811365,0.27112903){$O_1$}%
    \put(0.29678592,0.36139488){$A_3$}%
    \put(0,0){\includegraphics[width=\unitlength,page=5]{dva-disp.pdf}}%
    \put(0.23811365,0.20343124){$O_1$}%
    \put(0.34643273,0.22599719){$A_2$}%
    \put(0,0){\includegraphics[width=\unitlength,page=6]{dva-disp.pdf}}%
    \put(0.23811365,0.13573134){$O_1$}%
    \put(0.3148391,0.05900584){$A_1$}%
    \put(0.25165411,0.00484634){Graphs $G_1$, $G_2$, $G_3$}%
    \put(0,0){\includegraphics[width=\unitlength,page=7]{dva-disp.pdf}}%
    \put(0.52149419,0.20343124){$O_2$}%
    \put(0.62981324,0.22599719){$A_4$}%
    \put(0,0){\includegraphics[width=\unitlength,page=8]{dva-disp.pdf}}%
    \put(0.52149419,0.13573134){$O_2$}%
    \put(0.59821958,0.05900584){$A_5$}%
    \put(0.55308779,0.00484634){Graphs $G_4$, $G_5$}%
    \put(0,0){\includegraphics[width=\unitlength,page=9]{dva-disp.pdf}}%
    \put(0.80487466,0.27112903){$O_1$}%
    \put(0.85000651,0.20794404){$A_1$}%
    \put(0,0){\includegraphics[width=\unitlength,page=10]{dva-disp.pdf}}%
    \put(0.91770641,0.3){$A_2$}%
    \put(0,0){\includegraphics[width=\unitlength,page=11]{dva-disp.pdf}}%
    \put(0.86354691,0.36139488){$A_3$}%
    \put(0,0){\includegraphics[width=\unitlength,page=12]{dva-disp.pdf}}%
    \put(0.80487466,0.13573134){$O_2$}%
    \put(0.85,0.07254635){$A_5$}%
    \put(0,0){\includegraphics[width=\unitlength,page=13]{dva-disp.pdf}}%
    \put(0.91770641,0.16280993){$A_4$}%
    \put(0,0){\includegraphics[width=\unitlength,page=14]{dva-disp.pdf}}%
    \put(0.85903194,0.00484634){Graph $G'$}%
  \end{picture}%
	\caption{``Cone'' over a graph $G$ with two dispatchers.}\label{fig:konus-monstr}
\end{center}
\end{figure}

\begin{corollary}
  In the above notations, the game $\langle G', h'\rangle$ is winning.
\end{corollary}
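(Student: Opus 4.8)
The plan is to imitate the proof of Theorem~\ref{thm:konus} almost verbatim; the only genuinely new point is how the dispatchers $O_1,\dots,O_m$ together play the role of the single all-seeing dispatcher $O$ of that theorem.

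First I would fix winning strategies $f$ for $\HG$ and $f_\ell$ for each $\HG_\ell$ and prescribe the strategy on $G'$. Every marked sage $A_\ell$ carries a composite colour $(c^1_\ell,c^2_\ell)$ with $0\le c^1_\ell\le h_\ell(A_\ell)-1$ and $0\le c^2_\ell\le h(A_\ell)-1$; he computes $c^1_\ell$ by $f_\ell$, reading only the $G_\ell$-coordinates of his $G_\ell$-neighbours, and $c^2_\ell$ by $f$, reading only the second coordinates of his neighbours among $A_1,\dots,A_k$. Every sage of $V(G_\ell)\setminus\{O_{j_\ell},A_\ell\}$ plays $f_\ell$ on his $G_\ell$-coordinate. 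By the definition of $E(G')$, a dispatcher $O_j$ is adjacent to every vertex of $S_j$, so it knows the arrangement of $\HG$ obtained by reading the second coordinates on $S_j$; since $S_j$ is predictable with respect to $f$, $O_j$ can select a vertex $w_j\in S_j$ that is guaranteed to be a winner of $\HG$ whenever $S_j$ contains one. If the principal index of $w_j$ equals $j$, i.e.\ $w_j=A_\ell$ with $j_\ell=j$ (so that the copy of $O_j$ actually sits inside $G_\ell$ and is there a neighbour of $A_\ell$), then $O_j$ \emph{activates} $G_\ell$, playing $f_\ell$ and reading only its neighbours inside $G_\ell$; otherwise $O_j$ answers arbitrarily.

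Next comes the verification. Fix an arrangement on $G'$ and let $W$ be the set of those $A_\ell$ whose second coordinate is guessed correctly by $f$; since $f$ is winning, $W\ne\emptyset$. Put $J=\{j:S_j\cap W\ne\emptyset\}$; every $w\in W$ lies in $S_{j_w}$, so $j_w\in J$ and $J\ne\emptyset$. For $j\in J$ predictability gives $w_j\in W$, and writing $\sigma(j)$ for the principal index of $w_j$ we get $w_j\in S_{\sigma(j)}\cap W$, hence $\sigma(j)\in J$. Here I would invoke that the principal indices are chosen compatibly with a linear order on the sets $S_j$, concretely $j_\ell=\min\{j:A_\ell\in S_j\}$ as the construction suggests, so that $w_j\in S_j$ forces $\sigma(j)\le j$. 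Then $\sigma\colon J\to J$ is non-increasing, so the least element $j^\star$ of $J$ satisfies $\sigma(j^\star)=j^\star$: the dispatcher $O_{j^\star}$ selects a winner $w_{j^\star}=A_{\ell^\star}$ whose principal index is $j^\star$, and therefore activates $G_{\ell^\star}$. Now in $G_{\ell^\star}$ everyone (the interior sages, $A_{\ell^\star}$ on his first coordinate, and $O_{j^\star}$) plays $f_{\ell^\star}$ on the $G_{\ell^\star}$-coordinates, so by winningness of $\HG_{\ell^\star}$ someone there guesses his $G_{\ell^\star}$-colour correctly; an interior sage or $O_{j^\star}$ then wins outright, while $A_{\ell^\star}$ guesses his first coordinate and, being in $W$, also his second, hence his whole composite colour. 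Thus the sages win on $G'$.

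The step I expect to be the main obstacle is exactly this dispatcher selection. A dispatcher $O_j$ can help only the graphs $G_\ell$ in which it physically appears, namely those with $j_\ell=j$, so one must be sure that for some $j$ the winner predicted out of $S_j$ is one of those very $A_\ell$, not a winner whose auxiliary graph hangs on a different dispatcher. With an arbitrary assignment of principal indices the induced map $\sigma$ could be a non-trivial permutation of $J$ (a ``dispatcher cycle''), and the argument would deadlock; the choice $j_\ell=\min\{j:A_\ell\in S_j\}$ is precisely what makes $\sigma$ order-decreasing and forces a fixed point. The remaining points are immediate from the definitions of $G'$ and $h'$: that $O_j$ really sees all of $S_j$, that the interior vertices of the $G_\ell$ acquire no new neighbours in $G'$, and that $w_j$ is a neighbour of $O_j$ inside $G_\ell$, so that its first coordinate is visible to $O_j$.
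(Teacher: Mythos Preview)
Your proposal follows the paper's approach essentially line for line: composite colours on the $A_\ell$, strategy $f_\ell$ on interior vertices, and dispatchers $O_j$ that use predictability of $S_j$ to decide which subgame to activate. The paper's own proof ends with ``let $\ell$ be an index for which $A_\ell$ guesses the $G$-coordinate correctly; then on $G_\ell$ someone wins'', without justifying why $O_{j_\ell}$ actually activates $G_\ell$ rather than pointing to a winner whose principal index lies elsewhere. Your $\sigma$-argument (iterating ``principal index of the predicted winner'' and using $j_\ell=\min\{j:A_\ell\in S_j\}$ to force $\sigma(j)\le j$, hence a fixed point at $\min J$) is exactly what is needed to close this step, and it is correct.

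Your caveat about the choice of principal indices is well taken: the construction in the paper allows $j_\ell$ to be chosen arbitrarily, yet neither the paper's verification nor yours goes through for an arbitrary choice, since without an order-compatible rule the map $\sigma$ can cycle. So you have proved the corollary under the natural convention $j_\ell=\min\{j:A_\ell\in S_j\}$ (which the paper explicitly suggests), and you have correctly flagged that the general statement would need a further argument. In short, your proof is the paper's proof with the missing dispatcher-selection step filled in.
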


\begin{proof}
  Each sage $A_\ell$ has a composite color and uses two strategies: the strategy
  of game $\HG_{\ell}$ for the first component of the color, and the strategy of
  game $\HG$ for the second one. The sages from $V(G_{\ell})\setminus\{O_{j_\ell},
  A_\ell\}$ use the strategy of game $\HG_{\ell}$ (the neighbors of $A_\ell$
  see at the  $G_\ell$-cordinate of his color only).

  Each of the sages $O_j$ sees the predictable component $S_j$ of graph
  $G$ and hence knows which sage in this component (if exists) guesses
  the $G$-coordinate of his own color correctly. Let sage $O_j$ knows that sage
  $A_\ell$ guesses correctly. If $j= j_\ell$, then $O_j$ sees only at his
  neighbors on subgraph $G_\ell$ and uses strategy $h_\ell$ (taking into account
  the $G_\ell$-coordinate of sage $A_\ell$'s color only). If $j\ne j_\ell$ or if
  none of sages in  $S_j$ guess correctly, $O_j$ can choose an arbitrary guess.

  Now let $\ell$ be an index, for which sage $A_\ell$ guesses the $G$-coordinate of his
  color correctly. Then on graph $G_\ell$ either one of the sages (not $A_\ell$)
  guesses correctly, or $A_\ell$ guesses correctly both coordinates of his color.
\end{proof}

\section{Blind chess}

In this section we present a new game which is in fact a special case of \hats
game on 4-cycle. This game gives us a whole class of new games on 
cooperative guessing. All you need to change in the initial \hats game is the
target of guessing. Here we replace the guessing of marked element in the set
(i.\,e.\ a color of hat) with making a check to invisible king! In general, the
sages can try to perform any actions,  for which in the absence of information 
100\,\% success is not guaranteed.

\subsection{Rook check}

\begin{definition}
  The game ``Rook check''.
  Two chess players $\mathcal L$ and $\mathcal R$ are sitting opposite each
  other and there is a chessboard on the wall behind each of them. Each chess
  player does not see his own board (which is behind him) but sees the board of the other chess
  player. The referee places one black king on each of these boards. So the players
  see the king on another board but do not see the king in their own board.
  After that, each chess player, independently of the other, point to one square of his own chessboard
   and the referee puts a white rook on this square. If
  at least one of the kings is under attack by the rook (or the
  rook is placed on the square where the king is), then the chess players
  both win, otherwise they lose.
\end{definition}

Chessboards of the players can be different and have arbitrary sizes, which are
known to the players. As in the \hats game, the chess players determine public
deterministic strategy in advance. The referee knows this strategy and plays
against the chess players.

Let us explain how  Rook check game relates to the \hats game. Let a graph
$G$ be the 4-cycle $ABCDA$ with hat function $h$. In fact, graph~$G$ is a
complete bipartite graph~$K_{2,2}$ with parts $\{A, C\}$ and $\{B, D\}$. The pair
of players $A$ and $C$ is called a chess player~$\mathcal L$, his board has
size $h(A)\times h(C)$. The pair $B$ and $D$ is called a chess player $\mathcal
R$, his board has size $h(B)\times h(D)$.

The hat colors of $A$ and $C$ can be interpreted as coordinates of the cell where
the king is placed. Since $A$ and $C$ do not see each other, they know nothing
about king placement on their board. The pair of colors that $A$ and $C$ say
can be interpreted as a \emph{cross} on the chessboard, i.\,e.\ a configuration 
consisting of one horizontal and one vertical line, or which is the same, a
position for chess rook. It is clear that one or both chess players guess their
colors if and only if the king is under attack of the rook. Similar interpretations are valid
for $B$ and $D$.

Thus, the \hats game on cycle $ABCDA$ with hat function $h$ is equivalent to the game
Rook check on the boards $L(h(A)\times h(C))$ and~$R(h(B)\times h(D))$. It is
clear that the result of the game does not depend on, which board is the left
and which is the right.

Generally, we can define the Rook check game in the case where $n$ chess players
sit in the vertices of an arbitrary graph: each player has his own chessboard but
sees only the boards of his neighbors (and does not see his own chessboard). The
aim of the players is similar, they want at least one of kings to be under
attack. This game is equivalent to \hats game  on a ``doubled'' graph. We will
not discuss this game here.

\medbreak

Let us return to the game of two players on the boards $L(a\times c)$ and $R(b\times
d)$. We use the following notations.

Let us number the cells of $L(a\times c)$ board from left to right from top to bottom, see
fig.~\ref{fig:2x3and3x4}{\it a}, where we use boards $L(2\times 3)$ and $R(3\times 4)$
as examples. Let the strategy of chess player~$\mathcal R$ be given by the table
as in fig.~\ref{fig:2x3and3x4}{\it b}. Here we put $ac$ labels  $r_i$ in the
cells of $R(b\times d)$  board (a~cell can contain several labels $r_i$), where the index $i$
runs over all numbers of the cells of $L(a\times c)$ board.
The label~$r_i$ means that chess player~$\mathcal R$, seeing his partner's king
is on the $i$-th cell of $L(a\times c)$ board, puts his rook on the
cell of $R(b\times d)$ board with the label~$r_i$.

The strategy of chess player $\mathcal L$ is also fiven with help of
$R(b\times d)$ board, see fig.~\ref{fig:2x3and3x4}{\it c}. Here there is a
number from 1 to $ac$ in each cell of $R(b\times d)$, the numbers denote
cells of $L(a\times c)$ board. Each cell of $R(b\times d)$ board contains
exactly one number, some numbers from 1 to $ac$ can be absent in this table and some
numbers can repeat. When $\mathcal L$ sees that $\mathcal R$'s king is located
on $R(3\times 4)$  board in the cell labeled~$k$,  he puts the rook on
$k$-th cell of $L(a\times c)$ board.

To avoid misunderstandings in notations, we use labels of type ``letter~$r$
with index'' for chess player~$\mathcal R$, and labels of type ``number'' for
chess player~$\mathcal L$. The lines on the board~$L$ are called rows and columns, whereas
the lines on the board $R$ are called verticals and horizontals.

\begin{figure}[h]
\setlength{\unitlength}{11pt}
\footnotesize
\leavevmode\hfil
\vbox{\hsize=3.3cm
\begin{tabular}{|c|c|c|}
\hline
1&2&3\\
\hline
4&5&6\\
\hline
\end{tabular}
\par
\vskip5pt
a) Cell numbers on $L$ board
}
\hfill
\begin{picture}(4,4)(0.8,-1)
\multiput(0,0)(0,1){4}{\line(1,0){4}}
\multiput(0,0)(1,0){5}{\line(0,1){3}}
\put(0.1,2.3){$r_4$}
\put(2.1,0.3){$r_2$}
\put(3.1,1.3){$r_3$}
\put(3.1,0.3){$r_1$}
\put(1.1,2.3){$r_5$}
\put(0.1,1.3){$r_6$}
\put(0.05,2.05){\shmt Z}
\put(1.05,1.05){\shmt Z}
\put(-3.7,-1.2){b) Strategy of player $\mathcal R$}
\end{picture}
\hfill
\begin{picture}(8,3)(0,-1)
\multiput(0,0)(0,1){4}{\line(1,0){4}}
\multiput(0,0)(1,0){5}{\line(0,1){3}}
\put(0.2,2.2){1}\put(1.2,2.2){3}\put(2.2,2.2){3}\put(3.2,2.2){5}
\put(0.2,1.2){2}\put(1.2,1.2){1}\put(2.2,1.2){4}\put(3.2,1.2){5}
\put(0.2,0.2){2}\put(1.2,0.2){6}\put(2.2,0.2){6}\put(3.2,0.2){4}
\put(-3.7,-1.2){c) Strategy of player $\mathcal L$}
\end{picture}
\caption{Notations for strategies.}
\label{fig:2x3and3x4}%
\end{figure}

\begin{definition}
  Let the king be in the $i$-th  cell of $L(a\times c)$ board.  A cell of
  $L(a\times c)$ board is said to be \emph{$i$-weak}  if the rook does not attack the king from
  this cell.  For example, cells 5 and 6 on the $L(2\times 3)$ board
  (fig.~\ref{fig:2x3and3x4}{\it а}) are 1-weak.
\end{definition}

\begin{lemma}\label{lem:chess-strategy}
  Let $L(a\times c)$ and $R(b\times d)$ be the boards in the game Rook check.
  A strategy is winning if and only if for each $i$, $1\leq i\leq ac$, all cells on
  $R(b\times d)$ board labeled  with numbers of $i$-weak cells belong to the cross
  with center $r_i$.
\end{lemma}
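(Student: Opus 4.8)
The plan is to prove the equivalence by simply unwinding the definitions: the displayed criterion is going to turn out to be a literal restatement of ``for every placement of the two kings, at least one rook attacks''. First I would record what a strategy is in the notation introduced above. It consists of two pieces: the assignment $i\mapsto r_i$ encoding $\mathcal R$'s strategy (seeing $\mathcal L$'s king on the $i$-th cell of $L$, player $\mathcal R$ puts his rook on the cell $r_i$ of $R$), and the labelling of the cells of $R$ by numbers of cells of $L$ encoding $\mathcal L$'s strategy; I would denote by $\ell(j)$ the cell of $L$ whose number is written in cell $j$ of $R$, so that seeing $\mathcal R$'s king on the $j$-th cell of $R$, player $\mathcal L$ puts his rook on cell $\ell(j)$ of $L$. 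A hat arrangement is then exactly a pair $(i,j)$, where $i$ is a cell of $L$ (position of $\mathcal L$'s king) and $j$ is a cell of $R$ (position of $\mathcal R$'s king).

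Next I would express the two ``somebody wins'' events separately for a fixed arrangement $(i,j)$. The king on $R$ at cell $j$ is attacked by $\mathcal R$'s rook precisely when $j$ lies on the cross with center $r_i$ on board $R$ — this is immediate from the rules of the game. The king on $L$ at cell $i$ is attacked by $\mathcal L$'s rook precisely when the cell $\ell(j)$ is \emph{not} $i$-weak — this is nothing more than the definition of an $i$-weak cell (a cell from which the rook fails to attack the king at $i$). Hence for a fixed arrangement $(i,j)$ the chess players win if and only if $j$ belongs to the cross with center $r_i$ on $R$, or $\ell(j)$ is not $i$-weak on $L$.

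Finally I would quantify over all arrangements. The strategy is winning iff for every $i$ and every $j$ the above disjunction holds, which I would rephrase as: for every $i$ and every cell $j$ of $R$ for which $\ell(j)$ \emph{is} an $i$-weak cell of $L$, the cell $j$ lies on the cross with center $r_i$. Reading this directly off board $R$: for each $i$, every cell of $R$ carrying the number of an $i$-weak cell must belong to the cross centered at $r_i$ — which is exactly the asserted condition. Since every step is an equivalence, the argument runs in both directions and the lemma follows.

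I do not expect a genuine obstacle in this proof; the only point requiring care is the bookkeeping of the two asymmetric encodings of the strategies, and in particular making sure that $i$-weakness is invoked for the cell actually occupied by $\mathcal L$'s rook, namely $\ell(j)$, and not for $j$ itself.
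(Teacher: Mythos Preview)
Your proposal is correct and follows essentially the same approach as the paper: both arguments reduce the winning condition to the observation that the only ``dangerous'' arrangements are those where $\mathcal L$'s rook lands on an $i$-weak cell, and in those cases the players win precisely when the corresponding cell of $R$ lies in the $r_i$-cross. Your write-up is simply a more explicit, fully quantified version of the paper's terse paragraph.
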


\begin{proof}
  Let cell $\ell$ of $L(a\times c)$ board be $i$-weak and the referee put the kings on the
  cell $i$ of $L(a\times c)$ board and the cell of $R(c\times d)$
  board labeled by $\ell$. Then player~$\mathcal L$ according to his strategy puts the rook on the
  cell $\ell$ of $L(a\times c)$ board, and it does not attack the king. In the same
  time, player~$\mathcal R$ puts his rook on the cell of $R(c\times d)$ board
  labeled by $r_i$. The players win if and only if this rook attacks the king, i.\,e.\ the cell
  labeled by~$\ell$ is in the cross with center $r_i$.
\end{proof}

This lemma provides the following property of winning strategies: if the cell $\ell$
on board $L(a\times c)$ is simultaneously $i$-weak, $j$-weak, etc., then all the
cells on $R(b\times d)$ board labeled by $\ell$ (if they exist) are located in the
intersection of the crosses with centers $r_i$, $r_j$ etc. For example, for the
strategies in fig.~\ref{fig:2x3and3x4} (we will prove below that they are
winning) both cells labeled by 1 on $R(3\times 4)$ board belong to intersection of
crosses  $r_5$ and $r_6$ (the shaded area in fig.~\ref{fig:2x3and3x4}{\it b})
because the cells 5 and 6 on board $L(2\times 3)$ are 1-weak.

\smallbreak
The following theorem gives a complete analysis of the game Rook check for two players.
We assume that the number of horizontals of each board does not exceed the number of verticals and that the left board has the shortest vertical size.

\begin{theorem}\label{thm:Rook_chess}
  The chess players win in the game Rook check on the following boards:
  \begin{enumerate}[label=Win\arabic*),leftmargin=*,labelindent=\parindent,nosep]
  \item if one of the boards has sizes $1\times k$, where $k$ is an arbitrary
    positive integer;
  \item $L(2\times k)$ and $R(2\times m)$,  where $k$ and $m$ are arbitrary
    positive integers;
  \item\label{itm:win2333} $L(3\times 3$), $R(3\times 3)$;
  \item $L(2\times 3)$, $R(3\times 4)$;
  \item $L(2\times 4)$, $R(3\times 3)$;
  \item $L(2\times 2)$, $R(k\times m)$, where $\min(k,m)\leq 4$.
  \end{enumerate}

  The chess players lose on the following boards:

  \begin{enumerate}[label=Lose\arabic*),leftmargin=*,labelindent=\parindent,nosep]
  \item\label{itm:lose2344} $L(2\times 3)$, $R(4\times 4)$;
  \item $L(2\times 3)$, $R(3\times 5)$;
  \item\label{itm:lose2434} $L(2\times 4)$, $R(3\times 4)$;
  \item $L(2\times 5)$, $R(3\times 3)$;
  \item $L(3\times 3)$, $R(3\times 4)$;
  \item $L(2\times 2)$, $R(5\times 5)$.
  \end{enumerate}
\end{theorem}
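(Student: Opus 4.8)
The plan is to prove the theorem by treating the winning and losing cases separately, in each case exploiting the combinatorial reformulation provided by Lemma~\ref{lem:chess-strategy}. Recall that a strategy consists of (i) the choice, for each cell $i$ of the $L$-board, of a cell $r_i$ on the $R$-board (player $\mathcal{R}$'s rook placement), together with (ii) a labelling of every cell of the $R$-board by a number in $\{1,\dots,ac\}$ (player $\mathcal{L}$'s rook placement). By the lemma the strategy wins exactly when, for every $i$, every $R$-cell whose label is an $i$-weak cell of $L$ lies in the cross centred at $r_i$. So the whole problem is a game of covering constraints on finite grids.

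For the winning cases I would simply exhibit an explicit strategy and verify the lemma's condition. Win1) is trivial: if a board is $1\times k$ then that player's rook covers an entire row, so a king there is always attacked. Win2) is essentially the classical arithmetic/modular strategy: on boards $2\times k$ and $2\times m$, one coordinate on each board has only two values, and a parity/mod argument handles it — this is exactly the substance of the equivalence with the \hats game on $C_4$, but I would present the cross-covering version directly. Win3) $L(3\times3)$, $R(3\times3)$ corresponds to the game $\langle C_4,3\rangle$, known to be winning; I would give the standard $3\times3$ Latin-square-type strategy and check it. Win4) $L(2\times 3)$, $R(3\times 4)$ is precisely the strategy drawn in fig.~\ref{fig:2x3and3x4}, so verification is a finite check using the shaded intersection of crosses $r_5,r_6$ as indicated; Win5) $L(2\times4)$, $R(3\times3)$ and Win6) $L(2\times2)$, $R(k\times m)$ with $\min(k,m)\le4$ I would handle by a similar small explicit construction, noting that when $\mathcal{L}$'s board is $2\times2$ the four cells of $L$ pair up into only two ``weak-cell'' patterns, which $\mathcal{R}$ can accommodate on any board with a short side $\le4$.

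For the losing cases the task is to show no strategy satisfies the lemma's condition, i.e. an adversary argument. The cleanest route is a counting/pigeonhole argument on the $R$-board. Fix any strategy. For $L(a\times c)$ with $a=2$, a cell $i$ has exactly the cells in the \emph{other} row that are \emph{not} in its column as $i$-weak; that is $(c-1)$ cells for each $i$ when $a=2$ — wait, more precisely the $i$-weak cells are those sharing neither row nor column with $i$, of which there are $(a-1)(c-1)$. For $L(2\times 3)$ that is $2$ weak cells per king position, and the union of crosses argument forces the labels of those $2$ cells to cram into the $r_i$-cross, a set of $b+d-1$ cells of the $R$-board; summing/overlapping these constraints over all $6$ king positions, one shows $bd$ is too large (for $R(4\times4)$, $R(3\times5)$) to admit a consistent labelling. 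For $L(2\times4)$, $L(2\times5)$ and $L(3\times3)$ paired with the stated $R$-boards, and for $L(2\times2)$, $R(5\times5)$, analogous but slightly more delicate inequalities are needed. The main obstacle is precisely making these non-existence arguments tight: the boards listed (e.g.\ $L(2\times4)$, $R(3\times4)$ losing while $L(2\times3)$, $R(3\times4)$ winning) sit exactly on the boundary, so a crude union bound will not suffice — one must track how the crosses $r_i$ for different $i$ must overlap (the shaded-intersection phenomenon from the figure, generalized), and argue that the labels $1,\dots,ac$ cannot all be simultaneously accommodated. I expect this case analysis, rather than the constructive half, to consume the bulk of the proof, and I would organize it so that the losing cases with $L=2\times k$ are reduced to a single lemma bounding how many distinct labels can appear on an $R$-board given that each label's copies must lie in a prescribed cross-intersection, and then instantiate it for each $(k,b,d)$.
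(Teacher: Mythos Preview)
Your treatment of the winning cases is essentially what the paper does: explicit strategies (the figures you cite) together with reductions to known \hats results on $C_4$ and on paths. That half is fine.

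The losing half, however, is only a plan, not a proof, and the plan does not match how the paper actually succeeds. You propose to reduce all the $L=2\times k$ losing cases to ``a single lemma bounding how many distinct labels can appear on an $R$-board given that each label's copies must lie in a prescribed cross-intersection'', and to instantiate it for each $(k,b,d)$. The paper's proofs show that no such uniform lemma is in evidence: the six losing cases are handled by genuinely different arguments. Lose1) and Lose2) do use the cross-intersection idea you describe --- one bounds the union of pairwise intersections of three crosses by $8$ cells and then does a short structural case analysis to see the $R$-board cannot be covered. But Lose3) ($L(2\times4)$, $R(3\times4)$) is proved by a two-colouring of the $R$-cells according to which \emph{row} of $L$ their label lies in, followed by a pigeonhole on horizontals; Lose4) is a direct pigeonhole finding four same-row labels spread over two horizontals of $R$, forcing a single $r_j$ into two horizontals at once; Lose5) ($L(3\times3)$, $R(3\times4)$) proceeds via structural claims forcing the nine $r_i$ to occupy the three horizontals in a rigid pattern, then a rook-attack count; and Lose6) is the easy observation that four crosses on a $5\times5$ board necessarily miss a cell (their centres lie in at most four rows and four columns).

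So the concrete gap is this: you have correctly diagnosed that ``a crude union bound will not suffice'' on the boundary cases, but you have not supplied the replacement arguments, and your conjectured unifying lemma does not materialise --- the paper's Lose3) in particular is orthogonal to cross-intersection counting. To complete the proof you need to carry out each of the six losing cases individually, and for at least three of them (Lose3, Lose4, Lose5) the mechanism is not the one you sketched.
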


For  boards of other sizes the question if the sages win can be answered by
comparing with these cases. For example, the chess players lose on the boards
$L(3\times 4)$, $R(3\times 4)$ because they lose even in ``smaller'' case~\ref{itm:lose2434}.
The chess players win on the boards $L(2\times 3)$, $R(3\times 3)$ because they
win even on larger boards (as in case~\ref{itm:win2333}).

\begin{proof}[Proof of the theorem]\mbox{}\\*
  \smallbreak{\bf Win1)} This statement is trivial.
  
  \smallbreak
  {\bf Win2)} In the \hats language the hat function of two neighbor sages in
  the corresponding 4-cycle equals 2, these sages provide a win, even
  not looking at the others.

  \smallbreak
  {\bf Win3)}  This is a retelling to the language of the Rook check game 
  the known statement that the sages win on 4-cycle, if they all obtain hats of
  three colors (\cite{Gadouleau2015, cycle_hats}). For example, the strategy
  of the sages, described in~\cite{spb_school_problems_2016}, looks in Rook check
  language as follows. If a chess player sees that the king of his
  pertner is in the central cell of the board, then he puts his rook on the center too.
  Otherwise he puts the rook on the cell, where the arrow 
  leading from partner's king shows (on the auxiliary diagram for this chess
  player, see fig.~\ref{fig:two3x3str}. The coordinates of cells in the figure
  correspond to the numbers of hat colors. Thus, chess player~$\mathcal L$,
  seeing that the king of his partner is located in the cell~$(2,2)$, puts his rook
  on the cell~$(1,0)$ (this case corresponds to the bold arrow in 
  fig.~\ref{fig:two3x3str} on the left).

\begin{figure}[h]
\setlength{\unitlength}{15pt}
\footnotesize
\leavevmode\hfil
\begin{picture}(3,5)(0,-2)
\multiput(0,0)(0,1){4}{\line(1,0){3}}
\multiput(0,0)(1,0){4}{\line(0,1){3}}

\put(2.5,1.5){\vector(0,1){1}}
\put(2.5,2.5){\vector(-2,-1){2}}
\put(0.5,1.5){\vector(0,-1){1}}
\put(1.5,0.5){\vector(-1,2){1}}
\put(2.5,0.5){\vector(-1,0){1}}
\put(1.5,2.5){\vector(1,-2){1}}
\put(0.5,2.5){\vector(1,0){1}}
\put(1.3,1.3){$\bullet$}
\put(-0.6,0.3){2}\put(-0.6,1.3){0}\put(-0.6,2.3){1}
\put(0.3,-0.7){2}\put(1.3,-0.7){0}\put(2.3,-0.7){1}
\put(-.5,-2){Strategy $\mathcal L$}
\thicklines
\put(0.5,0.5){\vector(2,1){2}}
\end{picture}
\hfil
\begin{picture}(3,3)(0,-2)
\multiput(0,0)(0,1){4}{\line(1,0){3}}
\multiput(0,0)(1,0){4}{\line(0,1){3}}
\put(0.5,0.5){\vector(0,1){1}}
\put(2.5,1.5){\vector(-2,-1){2}}
\put(2.5,2.5){\vector(0,-1){1}}
\put(0.5,1.5){\vector(2,1){2}}
\put(1.5,0.5){\vector(1,0){1}}
\put(2.5,0.5){\vector(-1,2){1}}
\put(1.5,2.5){\vector(-1,0){1}}
\put(0.5,2.5){\vector(1,-2){1}}
\put(1.3,1.3){$\bullet$}
\put(-0.6,0.3){2}\put(-0.6,1.3){0}\put(-0.6,2.3){1}
\put(0.3,-0.7){2}\put(1.3,-0.7){0}\put(2.3,-0.7){1}
\put(-.5,-2){Strategy $\mathcal R$}
\end{picture}

\caption{Four sages stand around non-transparent baobab... }
\label{fig:two3x3str}%
\end{figure}
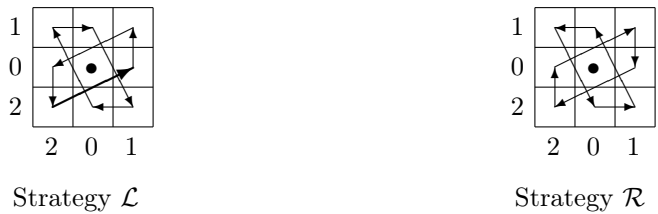

  \smallbreak
  {\bf Win4), Win5)}
  The strategies presented in fig.~\ref{fig:2x3and3x4} and~\ref{fig:2x4and3x3}
  satisfy Lemma~\ref{lem:chess-strategy} (direct check). So the chess players win.

\begin{figure}[h]
\setlength{\unitlength}{18pt}
\footnotesize
\leavevmode
\vbox{\hsize=3.3cm
\begin{tabular}{|c|c|c|c|}
\hline
1&2&3&4\\
\hline
5&6&7&8\\
\hline
\end{tabular}
\par
\vskip5pt
a) Cell numbers on board $L$
}
\hfill
\begin{picture}(3,4)(2,-1)
\multiput(0,0)(0,1){4}{\line(1,0){3}}
\multiput(0,0)(1,0){4}{\line(0,1){3}}
\put(1.3,0.3){$r_4$}
\put(1.5,1.6){$r_2$}
\put(1.3,1.2){$r_3$}
\put(1.05,1.6){$r_1$}
\put(0.3,2.3){$r_5$}
\put(2.3,2.3){$r_6$}
\put(2.3,0.3){$r_7$}
\put(0.3,0.3){$r_8$}
\put(-1.7,-1.){b) Strategy of player $\mathcal R$}
\end{picture}
\hfill
\begin{picture}(5,3)(0,-1)
\multiput(0,0)(0,1){4}{\line(1,0){3}}
\multiput(0,0)(1,0){4}{\line(0,1){3}}
\put(0.3,2.3){3}\put(1.3,2.3){5}\put(2.3,2.3){4}
\put(0.3,1.3){8}\put(1.3,1.3){6}\put(2.3,1.3){8}
\put(0.3,0.3){2}\put(1.3,0.3){7}\put(2.3,0.3){1}
\put(-2.5,-1.){c) Strategy of player $\mathcal L$}
\end{picture}
\caption{Winning strategy for game on boards
$L(2\times 4)$, $R(3\times 3)$.}%
\label{fig:2x4and3x3}%
\end{figure}
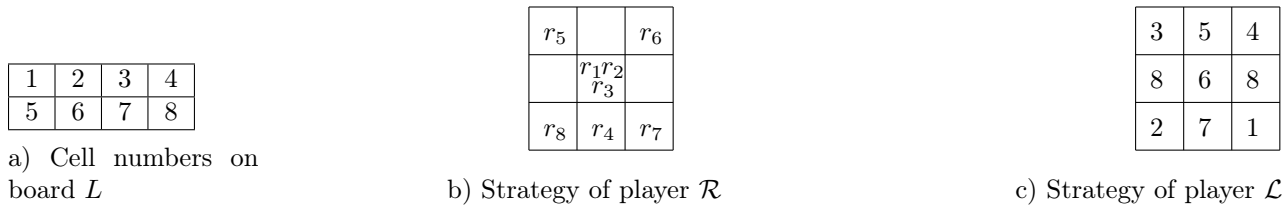

  \smallbreak
  {\bf Win6)}  In \hats language this case means that the 4-cycle contains
  path~$P_3$ with hat function  2, $x$, 2, where $x\leq 4$. The sages win on
  such path by corollary~\ref{cor:path2442}.

  \smallbreak
  {\bf Lose1)}
  We show that the players have no winning strategy in this case.

  Fix a strategy of chess player~$\mathcal R$,  see, for instance,
  fig~\ref{fig:2x3and4x4}{\it b}. Let us try to understand how the strategy of $\mathcal
  L$ looks like, namely, where can the cells with labels 1, 2, and 3 be located on
  $R(4\times 4)$ board. By lemma~\ref{lem:chess-strategy}, the cells with label~1
  belong to the intersection of crosses $r_5$ and $r_6$, the cells with label~2 to
  the intersection of crosses $r_4$ and~$r_6$, and the cells with label~3 to the
  intersection of $r_4$ and $r_5$.

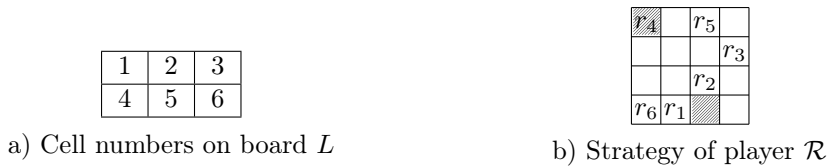
\begin{figure}[h]
\setlength{\unitlength}{11pt}
\footnotesize
\leavevmode\hfil
\vbox{\hsize=4.5cm\centering
\begin{tabular}{|c|c|c|}
\hline
1&2&3\\
\hline
4&5&6\\
\hline
\end{tabular}
\par
\vskip5pt
a) Cell numbers on board $L$
}
\hfil
\begin{picture}(6,5)(0,-1)
\multiput(0,0)(0,1){5}{\line(1,0){4}}
\multiput(0,0)(1,0){5}{\line(0,1){4}}
\put(0.05,3.05){\shmt Z}
\put(2.05,0.05){\shmt Z}
\put(0.1,3.3){$r_4$}
\put(2.1,1.3){$r_2$}
\put(3.1,2.3){$r_3$}
\put(1.1,0.3){$r_1$}
\put(2.1,3.3){$r_5$}
\put(0.1,0.3){$r_6$}
\put(-2.7,-1.2){b) Strategy of player $\mathcal R$}
\end{picture}
\caption{It happens that this strategy is losing.}%
\label{fig:2x3and4x4}%
\end{figure}

Note that the union of pairwise intersection of any three crosses (possibly, coinciding) on $R(4\times 4)$ board contains at most 8 cells. Indeed, let us examine the cases.

1.  If the centers of the crosses belong to different verticals and horizontals, then each pairwise intersection consists of two cells (in the example in  fig.~\ref{fig:2x3and4x4}{\it b}, the intersection of the crosses $r_5$ and $r_6$ is shaided), so we have at most 6 cells totally.

2. If the centers of any two crosses do not coincide and two centers belong to the same horizontal or vertical (as the crosses $r_4$ and $r_5$ in fig.~\ref{fig:2x3and4x4}{\it b}), then the intersection of these two crosses contains 4 cells and adding of the third cross (say, $r_6$) can give 4 more cells to the union of pairwise intersections only if the center of the third cross and one of the first two centers are on the same line (as $r_4$ and $r_6$ in fig.~\ref{fig:2x3and4x4}{\it b}. In this case, we have 8 cells, and 7 of them belong to one cross (cross~$r_4$ in our example).

3. If the centers of some two crosses coincide, then intersection of these crosses contains 7 cells. For any location of the third center the set of pairwise intersections does not increase.

Thus, for the cells with labels 1, 2, 3 on board $R(4\times 4)$ there are at most 8 positions, and similarly, for the cells with labels 4, 5, 6 there are at most 8 positions too. Since $R(4\times 4)$ board contains 16~cells, we have 8~positions for labels 1, 2, 3 and 8~positions for labels 4, 5, 6. But as was established by trying all possible cases 1--3, 8~positions can be realized only as a set ``whole cross plus one cell''. It remains to observe that it is impossible to cover $R(4\times 4)$ board completely by two crosses and two additional cells.

\smallbreak
{\bf Lose2)}  As in \ref{itm:lose2344} we make sure that the union of pairwise
intersections of any three crosses (possibly, coinciding) on $R(3\times 5)$
board contains at most 8 cells. The cases, in which this union contains 7 or 8
cells, are drawn in fig.~\ref{fig:crosses3x5},  these are the cases, when the
centers of two crosses belong to the same vertical or the same horizontal (including
the case, when both centers are in one cell). In all these cases, the union of
pairwise intersections of three crosses occupies one whole horizontal of the
board, and it occupies less than a half of cells in each of the two other horizontals. This means that the union of two such sets cannot cover the board
completely.

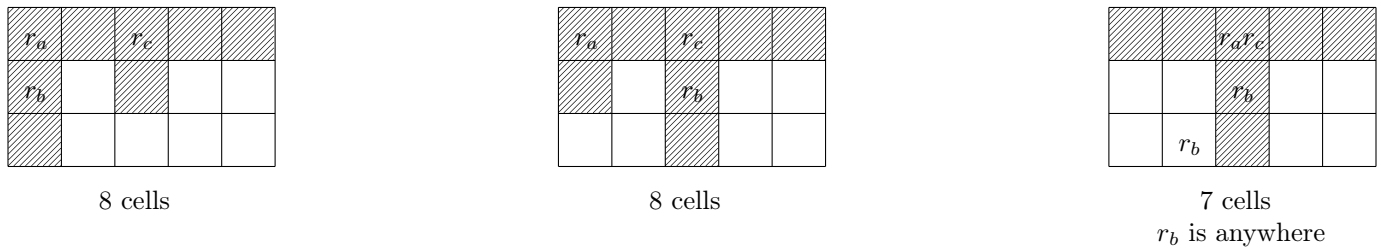
\begin{figure}[h]
\setlength{\unitlength}{20pt}
\footnotesize
\leavevmode
\begin{picture}(5,4)(0,-1)
\multiput(0,0)(0,1){4}{\line(1,0){5}}
\multiput(0,0)(1,0){6}{\line(0,1){3}}
\put(0.3,2.3){$r_a$}
\put(0.3,1.3){$r_b$}
\put(2.3,2.3){$r_c$}
\multiput(0,2)(1,0){5}{\shmtt Z}
\put(0,1){\shmtt Z}
\put(0,0){\shmtt Z}
\put(2,1){\shmtt Z}
\put(1.7,-.8){8 cells}
\end{picture}
\hfill
\begin{picture}(5,4)(0,-1)
\multiput(0,0)(0,1){4}{\line(1,0){5}}
\multiput(0,0)(1,0){6}{\line(0,1){3}}
\put(0.3,2.3){$r_a$}
\put(2.3,1.3){$r_b$}
\put(2.3,2.3){$r_c$}
\multiput(0,2)(1,0){5}{\shmtt Z}
\put(0,1){\shmtt Z}
\put(2,0){\shmtt Z}
\put(2,1){\shmtt Z}
\put(1.7,-.8){8 cells}
\end{picture}
\hfill
\begin{picture}(5,4)(0,-1)
\multiput(0,0)(0,1){4}{\line(1,0){5}}
\multiput(0,0)(1,0){6}{\line(0,1){3}}
\put(2,0){\shmtt Z}
\put(2,1){\shmtt Z}
\put(2.05,2.3){$r_a$}
\put(2.3,1.3){$r_b$}
\put(1.3,0.3){$r_b$}
\put(2.5,2.3){$r_c$}
\multiput(0,2)(1,0){5}{\shmtt Z}
\put(1.7,-.8){7 cells}
\put(.9,-1.4){$r_b$ is anywhere} 
\end{picture}
\caption{Union of pairwise intersections of three crosses on board
$R(3\times 5)$.}%
\label{fig:crosses3x5}%
\end{figure}

\smallbreak
{\bf Lose3)}
The argument below was proposed by Oleg Chemokos. 

We fix some strategies of the chess players $\mathcal L$ and $\mathcal R$ and verify that one can find positions for the kings such that both kings avoid a check.
In our standard notations each cell $i$ on $L(2\times 4)$ board determines three $i$-weak cells (see fig.~\ref{fig:2x4and3x3}{\it a}). This set of three weak cells can consist of any three cells in one row.

The strategy of chess player~$\mathcal L$ is given by labelling each cell on $R(3\times 4)$ board. Paint in white the cells of $R(3\times 4)$ board containing the labels corresponding to the first row of $L(2\times 4)$ board, paint in black the other cells. Without loss of generality we may assume that the number of white cells on the board is less than or equal the number of black cells. The following three cases cover all the possibilities, for which this inequality can be realized.

1. One of the horizontals of $R(3\times 4)$ board (for definiteness the first one) contains three white cells $u_1$, $u_2$, $u_3$ and one more horizontal (the second) contains two white cells $u_4$ and $u_5$. Then the first row of $L(2\times 4)$ board contains a cell~$\ell$ such that the label $\ell$ occurs in the first two horizontals of $R(3\times 4)$ board at most once, and, moreover, if so, the label $\ell$ occurs in the first horizontal, say, in cell $u_1$. The other cells of the first row on $L(2\times 4)$ board are $(\ell+4)$-weak, and cells $u_2$, $u_3$, $u_4$ and $u_5$ must belong to the same cross by lemma~\ref{lem:chess-strategy}, which is not true.

2. Each horizontal of $R(3\times 4)$ board contains two white cells. Then we choose a cell~$\ell$ in the first row of $L(2\times 4)$ board, such that the label $\ell$ occurs on $R(3\times 4)$ board at most once (for definiteness, in the third horizontal). The other cells in the first row of $L(2\times 4)$ board are $(\ell+4)$-weak, and the corresponding labels in the first two horizontals of $R(3\times 4)$ board are not covered by one cross.

3. One horizontal contains four white cells and two other horizontals contain one white cell each. Then we replace ``'black'' and ``white'' and consider the first case.

The obtained contradiction proves that the strategy is losing.

\smallbreak
{\bf Lose4)}
We number the cells of board $L$, as in fig.~\ref{fig:2x5and3x3}\,a). The strategy of chess player~$\mathcal L$ is given by writing the  number from 1 to 10 in each cell of $R(3\times 3)$ board  (these are the numbers of cells on $L(2\times 5)$ board). Since  $L(2\times 5)$ board has two rows only, there exist two horizontals on $R(3\times 3)$ board and two cells in each of them, such that the four labels in these cells correspond to the cells (some of them can coincide) belonging to the same row of $L(2\times 5)$ board. Let $j$-th cell in the other row be $i$-weak with respect to all these cells.

For example, let labels 1, 2, 3, 4 be located on $R(3\times 3)$ board, as in fig. \ref{fig:2x5and3x3}\,b). Then the number~10 is 1-, 2-, 3- and 4-weak simultaneously. This means that the rook in the cell~$r_{10}$ of $R(3\times 3)$ board attacks the cells with labels 1, 2, 3 and 4. But this is impossible:  to attack labels 1 and 2, it must be located in the upper row of $R(3\times 3)$ board, and to attack 3 and 4, it must be located in the bottom row.

By the same reason the general case is also impossible: the cell~$r_j$ must be located
in two horizontals of~$R(3\times 3)$ simultaneously.

\begin{figure}[h]
\setlength{\unitlength}{18pt}
\footnotesize
\begin{minipage}[t]{0.56\linewidth}
\leavevmode
\vbox{\hsize=3.3cm
\begin{tabular}{|c|c|c|c|c|}
\hline
1&2&3&4&5\\
\hline
6&7&8&9&10\\
\hline
\end{tabular}
\par
\vskip5pt
a) Cell numbers on board $L$
}
\hfill
\begin{picture}(5,4)(0,-1)
\multiput(0,0)(0,1){4}{\line(1,0){3}}
\multiput(0,0)(1,0){4}{\line(0,1){3}}
\put(0.3,2.3){1}\put(1.3,2.3){2}%
\put(0.3,0.3){3}\put(1.3,0.3){4}%
\put(-1.5,-1.){b) Strategy of player $\mathcal L$}
\end{picture}
\caption{Seek a strategy for game $L(2\times 5)$, $R(3\times 3)$.}%
\label{fig:2x5and3x3}%
\end{minipage}
\hfill
\begin{minipage}[t]{0.4\linewidth}
\begin{center}
\begin{picture}(4,3)%
\multiput(0,0)(0,1){4}{\line(1,0){4}}
\multiput(0,0)(1,0){5}{\line(0,1){3}}
\put(3.5,1.3){$r_2$}
\put(3.05,1.3){$r_1$}
\put(0.05,0.3){$r_7$}
\put(0.5,0.3){$r_8$}
\put(3.3,2.3){$a$}
\put(1.3,0.3){$\times$}
\put(2.3,0.3){$\times$}
\put(0.3,1.3){$b$}
\end{picture}
\caption{Strategy for case Lose5).}
\label{fig:33vs34}
\end{center}
\end{minipage}
\end{figure}

\smallbreak
{\bf Lose5)}
Assume that the chess players have a winning strategy. We number the cells of board~$L(3\times 3)$ by numbers from 1 to 9. Then the strategy of chess player~$\mathcal R$ is specified by a placement of nine symbols: $r_1$, $r_2$, \dots, $r_9$ on board~$R(3\times 4)$. And the strategy of chess player~$\mathcal L$ is specified by writing a number from 1 to 9 in each cell of $R(3\times 4)$ board.

Claim 1. If the cells $u$, $v$, $w$ belong to three different rows and three different columns of $L(3\times 3)$ board, then the labels  $r_u$, $r_v$ и $r_w$  belong to three different horizontals of $R(3\times 4)$ board.

Indeed, each cell of $L(3\times 3)$ board is either $u$-weak, or $v$-weak, or $w$-weak. By lemma~\ref{lem:chess-strategy} this implies that each label on $R(3\times 4)$ board belongs to $r_u$-, $r_v$- or $r_w$-cross. This is possible if the labels $r_u$, $r_v$ and $r_w$ are in different horizontals only.

Claim 2. There are two possible cases of the placement of symbols $r_1$, $r_2$, \dots, $r_9$ on $R(3\times 4)$ board:

1) either the symbols $r_1$, $r_2$,  $r_3$ are located in one horizontal of $R(3\times 4)$ board, the symbols $r_4$, $r_5$,  $r_6$ are located in another horizontal, and the symbols $r_7$, $r_8$,  $r_9$ are in the third one;

2) or the symbols $r_1$, $r_4$,  $r_7$ are located in one horizontal of $R(3\times 4)$ board, the symbols $r_2$, $r_5$,  $r_8$ are located in another horizontal, and the symbols $r_3$, $r_6$,  $r_9$ are in the third one.

The claim is proved by moderately nasty brute force with the help of claim 1.

Put rooks in all cells~$r_i$ of $R(3\times 4)$ board (we put in a cell as many rooks as there are symbols~$r_i$ in it). Each cell $i$ on $L(3\times 3)$ board determines four $i$-weak cells which are located in two rows and two columns.

Claim 3. Each cell on $R(3\times 4)$ board (let it contain a label $i$)
is under attack when the rook stands on cells labeled $r_j$, where $j$ is $i$-weak number. Two of this ``dangerous'' cells are located in the same horizontal, and the other two belong to another horizontal.

The claim follows from claim 2 . This means that we put several rooks on some cells.

Now we prove that no winning strategy with these properties exist. By claim 2, the first horizontal of $R(3\times 4)$ board contains at most three labels $r_i$. Therefore the first horizontal $R(3\times 4)$ board contains an ``empty'' cell, i.\,e.~the cell, containing no symbols~$r_i$, denote it by~$a$. For definiteness let it be in the fourth vertical (fig.~\ref{fig:33vs34}).
By claim 3, four rook's attacks are directed to this cell, and two of these four rooks are in one horizontal, and another two are in another horizontal. This means that two rooks are certainly located in one of the cells of the fourth vertical. For definiteness let this cell be located in the second horizontal.
By claim 2, the second horizontal contains three rooks in total, and we have established that two of them are in one cell.
Therefore, there are two ``empty'' cells in the second horizontal. Let us choose the one  above which there is no more than one rook stands in the first horizontal. Let this cell be in the first column, denote it~$b$. There are four rooks attacks from two pairs of rooks located in two rows directed to chosen cell. One pair of rooks is obviously located in the second horizontal, and another pair is located in the third horizontal (there is at most one rook above cell~$b$ in the first horizontal). Now we see that one of the cells in the third horizontal, in the second or in the third vertical, cannot gather four rook's attacks from two different horizontals, a contradiction.

\smallbreak
{\bf Lose6)}
Assume that the chess players have a winning strategy. The strategy of chess player~$\mathcal R$ is given in standard notations by placement of the four symbols $r_1$, $r_2$, $r_3$, $r_4$ on $R(5\times 5)$ board. There is at least one cell $Q$ on $R(5\times 5)$ board, not belonging to any of four crosses determined by these symbols. The strategy of chess player~$\mathcal L$ is specified by writing a number from 1 to 4 in each cell of $R(5\times 5)$ board. Without loss of generality cell~$Q$ is labeled by~1.
Let the referee put the kings on cell~$Q$ on $R(5\times 5)$ board and on cell~4 on $L(2\times 2)$ board. Then player~$\mathcal L$ puts his rook on cell~1 of $L(2\times 2)$ board, and player~$\mathcal R$ puts the rook on cell~$r_4$ of $R(5\times 5)$ board. None of the rooks attacks the king. The chess players lost.

The theorem is completely proved.
\end{proof}

\subsection{Queen check}

Consider a variation of the game, where the players put queens instead of rooks. Call this game \emph{Queen check}.

\begin{lemma}\label{lemQueen4545}
  The players win in Queen check game on boards  $L(4\times 5)$, $R(4\times 5)$.
\end{lemma}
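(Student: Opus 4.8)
The plan is to translate the statement into the strategy-description language of Lemma~\ref{lem:chess-strategy} and then exhibit an explicit winning strategy. For a cell $x$ of a $4\times5$ board write $Q(x)$ for the set of cells a queen on $x$ attacks or occupies (the union of the row, the column and the two diagonals through $x$) and $W(x)$ for its complement, i.e.\ the cells from which a queen does not check a king standing on $x$. Repeating verbatim the proof of Lemma~\ref{lem:chess-strategy}, a pair of strategies --- a map $\sigma$ assigning to each cell $m$ of $R$ (the board $\mathcal L$ sees) the cell $\sigma(m)$ of $L$ on which $\mathcal L$ puts his queen, and a map $\tau$ assigning to each cell $i$ of $L$ the cell $\tau(i)$ of $R$ for $\mathcal R$ --- is winning if and only if for every cell $i$ of $L$ the set $\sigma^{-1}(W(i))\subseteq R$ lies inside a single queen region $Q(\tau(i))$. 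Since the attack relation is symmetric, this single family of inclusions already encodes the mirror condition for $\mathcal R$, so nothing extra has to be checked.

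First I would record the weak-set sizes on the $4\times5$ board: $|W(x)|=6$ for the two most central cells, $7$ for the other four interior cells, $8$ for the six cells in the interiors of the long sides, and $9$ for the remaining eight cells (the four corners and the four middles of the short sides). A short case analysis --- e.g.\ $W((0,0))$ consists of three cells in each of rows $1,2,3$, and no single queen region can contain such a set --- shows that the identity labeling $\sigma=\mathrm{id}$ is \emph{not} winning; hence $\sigma$ must be a genuine permutation of the $20$ cells that deforms each $9$-cell weak set onto a subset of one of the six ``large'' queen regions (the two of size $14$ centred at the central cells and the four of size $13$). I would then simply write down such a $\sigma$ together with all the values $\tau(i)$, as two labelled $4\times5$ diagrams in the style of fig.~\ref{fig:2x3and3x4}, and verify the $20$ inclusions $\sigma^{-1}(W(i))\subseteq Q(\tau(i))$ directly; grouping cells by the Klein four-group of reflections of the rectangle cuts this down to roughly six genuinely different checks.

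The hard part is precisely the design of $\sigma$: one permutation must simultaneously route all eight $9$-cell weak sets into the six available large queen regions --- several weak sets may aim at the same region, so this is not a packing problem, but each must still fit with only a few cells to spare --- while also keeping the easier $6$-, $7$- and $8$-cell weak sets inside queen regions. Because a $14$-cell region leaves just five cells of slack against a $9$-cell weak set, the corners and short-side cells pin down most of $\sigma$, and the real content of the proof is the bookkeeping that a globally consistent choice exists. Once $\sigma$ and $\tau$ are in hand, the verification is the same ``cells labelled by $i$-weak numbers lie in one cross'' argument used for rook check, with queen regions in place of crosses.
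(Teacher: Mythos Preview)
Your proposal is a plan rather than a proof: you describe the framework of Lemma~\ref{lem:chess-strategy} adapted to queens, observe that the identity labelling fails, and then assert that you would ``simply write down'' a permutation $\sigma$ and centres $\tau(i)$ satisfying the twenty inclusions. But you never produce $\sigma$, and you yourself flag its construction as ``the hard part'', with only five cells of slack per corner and eight $9$-cell weak sets to route into six large regions. Without the explicit tables there is no proof here, only a hope that such a permutation exists.

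More importantly, you have missed the structural feature of the $4\times5$ board that makes this lemma a two-line argument. On this board there are two cells --- the central cells $c2$ and $c3$ --- from each of which a queen attacks \emph{every} cell of one colour class in the ordinary chessboard colouring (ten cells each; one checks this directly). So each player needs only two queen positions, not twenty. The paper's proof is then just a parity check: player~$\mathcal L$ plays under the hypothesis ``the two kings sit on cells of the same colour'' and puts his queen on the central square covering that colour; player~$\mathcal R$ plays under the complementary hypothesis ``different colours''. Exactly one hypothesis is true, and the corresponding queen checks its king. No permutation, no twenty inclusions, no bookkeeping. Your general machinery would eventually recover this --- a correct $\sigma$ could collapse to the two-valued map ``send each cell to $c2$ or $c3$ according to its colour'' --- but starting from a search over permutations of twenty cells obscures the one observation that matters.
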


\begin{proof}
  Paint the cells of both boards as shown in fig.~\ref{fig:queen45vs45}, a). Let
  both chess players put their queens only on the cells marked with queens, and let the first chess player under the assumption ``the Kings are on cells
  of the same color'', and the second under the assumption ``The
  kings are located on cells of different colors''.
\end{proof}

However we can use also usual chess coloring instead of ``exotic''  coloring
as above. Indeed, the queen on the cell~$c2$ holds under attack all the
cells of the same color in chessboard coloring! And the same is true for the cell $c$3,
fig.~\ref{fig:queen45vs45}, b).

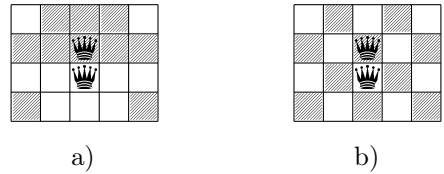
\begin{figure}[h]
\footnotesize
\hfil
\begin{minipage}[b]{0.5\linewidth}
\setlength{\unitlength}{11pt}
\begin{center}
\begin{picture}(5,4)%
\multiput(0,0)(0,1){5}{\line(1,0){5}}
\multiput(0,0)(1,0){6}{\line(0,1){4}}
\multiput(1.05,3.05)(1,0){3}{\shmt Z}
\multiput(0.05,2.05)(1,0){2}{\shmt Z}
\multiput(3.05,2.05)(1,0){2}{\shmt Z}
\put(0.05,0.05){\shmt Z}
\put(4.05,0.05){\shmt Z}
\put(2.05,2.05){\shmt l}
\put(2.05,1.05){\shmt q}
\put(2.05,-1.5){a)}
\end{picture}
\hfil
\begin{picture}(5,4)%
\multiput(0,0)(0,1){5}{\line(1,0){5}}
\multiput(0,0)(1,0){6}{\line(0,1){4}}
\multiput(0.05,0.05)(2,0){3}{\shmt Z}
\multiput(0.05,2.05)(4,0){2}{\shmt Z}
\multiput(1.05,1.05)(2,0){2}{\shmt Z}
\multiput(1.05,3.05)(2,0){2}{\shmt Z}
\put(2.05,2.05){\shmt l}
\put(2.05,1.05){\shmt q}
\put(2.05,-1.5){b)}
\end{picture}
\end{center}
\caption{Queen check on $4\times5$ boards.}%
\label{fig:queen45vs45}%
\end{minipage}

\end{figure}

The next statement has been found by computer, the proof was found by
N.~Kononenko.

\begin{lemma}
  The players win in ``Check by queen'' game on boards  $L(4\times 4)$,
  $R(5\times 5)$
\end{lemma}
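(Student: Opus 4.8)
I would first isolate the queen analogue of Lemma~\ref{lem:chess-strategy}. Identify $\mathcal{R}$'s strategy (on the $5\times5$ board) with a map $q$ from the cells of the $4\times4$ board to the cells of the $5\times5$ board, and $\mathcal{L}$'s strategy with a labelling $\ell$ of the cells of the $5\times5$ board by cells of the $4\times4$ board. Calling a cell of the $4\times4$ board \emph{$i$-weak} if a queen placed there does not attack~$i$, and denoting by $W_i$ the set of $i$-weak cells, one shows (verbatim as in Lemma~\ref{lem:chess-strategy}, replacing ``cross'' by ``queen attack region'') that the pair of strategies is winning if and only if, for every cell $i$ of the $4\times4$ board, the set $\ell^{-1}(W_i)$ is contained in a single queen attack region of the $5\times5$ board, namely that of $q(i)$. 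The whole problem thus becomes: design the labelling $\ell$.

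\textbf{Why the easy approach fails, and the fix.} A direct copy of the proof of Lemma~\ref{lemQueen4545} (a ``same colour / different colour'' strategy) would need each of the two boards to be covered by two queens; but the queen domination number of the $5\times5$ board is $3$, so two queens cannot dominate it. The remedy is to let $\mathcal{L}$ place his queen only on a carefully chosen set $T$ of five cells of the $4\times4$ board that is \emph{triply dominating}: with rows and columns numbered $1,\dots,4$, take $T=\{(1,1),(2,2),(2,3),(3,2),(3,3)\}$, and check directly that every one of the $16$ cells is attacked by at least three of the five queens standing on $T$. Hence $|W_i\cap T|\le 2$ for every $i$, and a short case check shows that the two-element sets that actually occur are exactly $\{(2,3),(3,2)\}$, $\{(2,2),(3,3)\}$, $\{(1,1),(2,2)\}$, $\{(1,1),(2,3)\}$, $\{(1,1),(3,2)\}$ (the rest of the $W_i\cap T$ are singletons or empty). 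Now $\ell$ is a partition of the $25$ cells of the $5\times5$ board into five classes $G_X=\ell^{-1}(X)$, $X\in T$, and $\mathcal{R}$, on seeing $\mathcal{L}$'s king at $i$, must answer with a queen whose attack region contains $\ell^{-1}(W_i)$ — by the above a union of at most two of the five classes.

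\textbf{What remains and the main obstacle.} It suffices to choose the five classes so that each of the unions $G_{(2,3)}\cup G_{(3,2)}$, $G_{(2,2)}\cup G_{(3,3)}$, $G_{(1,1)}\cup G_{(2,2)}$, $G_{(1,1)}\cup G_{(2,3)}$, $G_{(1,1)}\cup G_{(3,2)}$, as well as each single class $G_X$, lies inside some queen attack region of the $5\times5$ board; then $q$ is obtained by picking a centre of such a region for each $i$, and the win follows from the queen analogue of Lemma~\ref{lem:chess-strategy}: whenever $\mathcal{L}$ fails, i.e. $\ell(S)\in W_P$ for the actual king positions $P$ (on the $4\times4$ board) and $S$ (on the $5\times5$ board), we have $S\in\ell^{-1}(W_P)$, which is attacked by $\mathcal{R}$'s queen. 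Note $G_{(1,1)}$ must be nonempty — it has to absorb the cells no two queens can cover — and a reasonable starting point is to take $G_{(1,1)}$ small, e.g. $\{(2,5),(4,1),(5,4)\}$ (contained in the attack region of the queen on $(4,5)$), force $G_{(2,2)}\cup G_{(3,3)}$ into the region of the central queen on $(3,3)$ and $G_{(2,3)}\cup G_{(3,2)}$ into that of the queen on $(1,2)$, and then split each union into its two classes so as to meet the remaining three requirements. I expect this last step — satisfying five simultaneous ``fits inside a queen region'' constraints on a single $25$-cell board — to be the real obstacle; a computer search (as the authors remark) yields a suitable $\ell$, after which the verification against the queen analogue of Lemma~\ref{lem:chess-strategy} is purely mechanical. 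A dual route, restricting $\mathcal{R}$'s queens to a triply dominating set on the $5\times5$ board and letting $\mathcal{L}$ (whose $4\times4$ board is already $2$-queen dominable) be the responder, should also work and may make the bookkeeping on the larger board lighter.
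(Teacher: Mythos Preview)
Your overall framework is right and matches the paper's: translate the queen version of Lemma~\ref{lem:chess-strategy}, restrict $\mathcal L$'s queen to a small set $T$ of cells on the $4\times4$ board so that each $W_i\cap T$ has at most two elements, label the $5\times5$ board by elements of $T$, and then choose $\mathcal R$'s responses so that each occurring union of two classes fits inside a single queen region. That is exactly the skeleton the paper uses.

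The gap is that you never actually finish the argument. You correctly identify the last step as ``the real obstacle'' and then defer it to a computer search; but the whole content of the lemma is the existence of a concrete labelling, and you do not exhibit one. Your suggested starting point (taking $G_{(1,1)}=\{(2,5),(4,1),(5,4)\}$, forcing $G_{(2,2)}\cup G_{(3,3)}$ into the region of the central queen, and $G_{(2,3)}\cup G_{(3,2)}$ into the region of a queen on $(1,2)$) is not verified, and in fact a queen on $(1,2)$ of the $5\times5$ board attacks only $13$ cells, while the central queen attacks $17$; together with your three cells for $G_{(1,1)}$ this leaves uncovered squares, so the split cannot be done this way. The ``bookkeeping'' you wave at is the proof.

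By contrast, the paper's construction is both simpler and fully explicit. It takes $|T|=4$, not $5$: the four queen positions are chosen so that one of them (the one numbered~$3$) never appears in a two-element $W_i\cap T$, and the only pairs that occur are $\{1,2\}$, $\{1,4\}$, $\{2,4\}$, together with singletons. The paper then writes down the explicit labelling of the $25$ cells of $R(5\times5)$ by $1,2,3,4$ and names, for each of the three pairs and for the singleton~$3$, a single cell of the $5\times5$ board whose queen region covers the corresponding union of classes (cells $b3$, $c4$, $c2$, $c3$ respectively). So reducing from five to four queen positions is what makes the final packing step tractable by hand; with your five-element $T$ you have five pair-constraints instead of three, which is why you got stuck. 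Your ``dual route'' suggestion is reasonable but is not what the paper does.
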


\begin{proof}
  Specify strategy of the chess players. Label $R(5\times 5)$ board as in
  fig.~\ref{fig:queen4x4and5x5}\,a). Seeing the fellow's king on the cell with
  label~$j$, chess player~$\mathcal L$ puts his queen on $L(4\times 4)$ board in
  the cell, labeled by number~$j$, fig.~\ref{fig:queen4x4and5x5}\,b). So, chess
  player~$\mathcal L$ uses only four positions for his queen. The numbers in the
  cells of $L(4\times 4)$ board in fig.~\ref{fig:queen4x4and5x5}\,c) show, from
  which positions the queen of chess player~$\mathcal L$ does not attack this
  cell. For example, the numbers 1 and 2 in the lower left corner mean that the
  lower left corner cell of $L(4\times 4)$ board is not under attack by the
  queen located at 1-st and in 2-nd positions,  shown in
  fig.~\ref{fig:queen4x4and5x5}\,b), and ``--'' means that the cell is under
  attack from all positions.

\begin{figure}[h]
\setlength{\unitlength}{11pt}
\footnotesize
\begin{minipage}[b]{0.3\linewidth}
\begin{center}
\tabcolsep=3.6pt
\begin{tabular}{|c|c|c|c|c|}
\hline
3&1&3&1&3\\
\hline
1&3&3&3&4\\
\hline
3&3&3&3&3\\
\hline
2&3&3&3&4\\
\hline
3&2&3&2&3\\
\hline
\end{tabular}

\begin{tabular}{ccccc}
{\it a}&{\it b}&{\it c}&{\it d}&{\it e}\\
\end{tabular}

\par
\vskip5pt
a) Strategy of player $\mathcal L$
\end{center}
\end{minipage}
\hfil
\setlength{\unitlength}{15pt}
\begin{picture}(4,5)(0,-1)
\multiput(0,0)(0,1){5}{\line(1,0){4}}
\multiput(0,0)(1,0){5}{\line(0,1){4}}
\put(0.3,0.3){4}
\put(2.3,1.3){2}
\put(2.3,2.3){3}
\put(1.3,2.3){1}
\put(-1.7,-1.2){b) Where $\mathcal L$ puts the queen}%
\end{picture}
\hfil
\begin{picture}(4,4)(0,-1)
\multiput(0,0)(0,1){5}{\line(1,0){4}}
\multiput(0,0)(1,0){5}{\line(0,1){4}}
\put(0.3,3.3){3}\put(1.07,3.3){2,4}\put(2.3,3.3){4}\put(3.05,3.3){1,2}
\put(0.3,2.3){2}\put(1.3,2.3){4}\put(2.3,2.3){--}\put(3.3,2.3){4}
\put(0.3,1.3){3}\put(1.3,1.3){--}\put(2.3,1.3){4}\put(3.05,1.3){1,4}
\put(0.05,0.3){1,2}\put(1.3,0.3){3}\put(2.3,0.3){1}\put(3.3,0.3){3}
\put(-1.3,-1.2){c) Instruction for $\mathcal R$}
\end{picture}
\caption{Queen check on $L(4\times 4)$ and $R(5\times 5)$ boards.}%
\label{fig:queen4x4and5x5}%
\end{figure}

  Seeing the king on $L(4\times 4)$ board, chess player~$\mathcal R$ with help
  of fig.~\ref{fig:queen4x4and5x5}\,c) immediately understands, from which
  ``unfavorable'' positions the queen of his partner cannot put the king in
  check. Therefore he must put his queen on $R(5\times 5)$ board so that it
  attacks all the cells, sending the queen of chess player~$\mathcal L$ to a
  unfavorable position.

  For unfavorable positions 1, 2 it is possible to put the queen on cell~{\it
    b}3; \  for 1, 4 on cell~{\it c}4;  \ for 2, 4 on cell~{\it c}2; \ for 3 on
  cell~{\it c}3.
\end{proof}

We found by computer that in Queen check game the chess players lose on boards $L(3\times 4)$, $R(7\times 7)$ and $L(4\times 5)$, $R(5\times 5)$.

The following statement was suggested to us by S.~Berlov. It generalizes the argument of lemma~\ref{lemQueen4545}.

\begin{lemma}
  Consider a variation of Queen сheck game in which five chess players
  are located so that each of them sees the boards of the others but does not
  see his own board. All the boards have size $11\times 11$.  As in the initial
  game, the referee puts one king on each board, and the chess players
  simultaneously point to the cells on their own boards, where the queen has to be put.

  The chess players win in this game.
\end{lemma}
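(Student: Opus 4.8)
The plan is to exploit the same ``coloring into residue classes'' idea used in Lemma~\ref{lemQueen4545}, but now with five players and boards of size $11\times 11$. First I would fix an abelian group structure: identify the cells of each board with $\ZZ_{11}\times\ZZ_{11}$, so that a king on board~$k$ sits at a pair $(x_k,y_k)$. The crucial observation is that a queen placed on a suitable cell of an $11\times 11$ board attacks an entire coset of a subgroup of $\ZZ_{11}\times\ZZ_{11}$: indeed, since $\gcd(11,11)=11$ but the diagonals have ``slope $\pm1$'', a queen attacks its row, its column, and both diagonals, and on an $11\times 11$ board these four lines through a point, together with that point, cover exactly the points $(x,y)$ with $x\equiv$ (something), $y\equiv$ (something), $x+y\equiv$ (something), or $x-y\equiv$ (something) — four arithmetic progressions. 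The idea of Berlov's generalization is to instead track a single linear functional: associate to the global configuration the sum $S=\sum_{k=1}^5 \lambda_k\cdot(\text{something linear in }x_k,y_k)\pmod{11}$, where each chess player, seeing the four other boards, knows everything in $S$ except his own contribution, and guesses so as to force $S$ into a prescribed residue class.

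More precisely, here is the approach I would carry out. For player~$k$, let him consider the value $t_k = x_k + y_k \bmod 11$ (or some fixed linear form in his own king's coordinates that a queen on one well-chosen cell can ``control'' — i.e.\ such that for each target value of $t_k$ there is a cell whose attack-set contains all cells with that value of the form). Then set $S = t_1+t_2+t_3+t_4+t_5 \bmod 11$. Player~$k$ sees $t_j$ for all $j\neq k$, so he knows $S - t_k$. Partition $\ZZ_{11}$ into five blocks of consecutive residues; since $11 = 2+2+2+2+3$ (or $3+2+2+2+2$), each player~$k$ can be assigned a block $B_k$ of size $|B_k|\in\{2,3\}$ and instructed: ``assume $S\in (\text{shift of }B_k\text{ by }S-t_k)$, which pins $t_k$ down to one of $|B_k|$ values; place your queen on a cell that attacks all cells of your board with any of those $|B_k|$ values of the form.'' The blocks cover all of $\ZZ_{11}$, so for the actual configuration, the true value of $S$ lands in exactly one player's block, and that player's hypothesis about $t_k$ is correct — hence his queen attacks his own king. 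The one genuinely combinatorial point to verify is the geometric claim: \emph{on an $11\times 11$ board, for any set of at most $3$ values $v$, there is a single cell whose queen-attack-set contains every cell with $x+y\equiv v\pmod{11}$ for all those $v$.} A queen on cell $(a,a)$ attacks, via its main diagonal, all cells $(x,x+ (\text{const}))$... — one must instead use the anti-diagonal direction, where a queen at $(a,b)$ attacks all $(x,y)$ with $x+y\equiv a+b$; so a single queen handles one value of $x+y$ for free, and one checks that the row and column of a cleverly placed queen mop up two (or with the exotic coloring of Lemma~\ref{lemQueen4545}, three) more residue classes of $x+y$ simultaneously.

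The main obstacle is precisely this last geometric lemma: making the arithmetic-progression decomposition compatible with what a \emph{single} queen move can cover. With rooks one only gets a row plus a column (two ``lines''), which is why Rook check fails on large boards; the whole point of queens — and of Berlov's size-$11$ choice — is that on an $11\times 11$ board a queen's four lines, under the nonstandard coloring, behave like the level sets of the linear form $x+y$ in a way that lets one queen absorb a block of $2$ or even $3$ consecutive residues. So the proof reduces to: (i)~exhibit the five blocks partitioning $\ZZ_{11}$ with sizes $2,2,2,2,3$; (ii)~for a block of size~$s$, explicitly name a cell of the $11\times11$ board whose queen-attack-set contains all cells of any of the $s$ prescribed ``$x+y$'' classes — this is where one re-uses or adapts the coloring picture of fig.~\ref{fig:queen45vs45}; (iii)~observe that the sum $S$ mod~$11$ is seen-minus-own by each player, so exactly one block is hit and that player wins. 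Steps (i) and (iii) are immediate; step (ii) is the crux and I expect it to require the same ``exotic vs.\ chessboard coloring'' trick already displayed in the $4\times5$ case, now scaled to modulus~$11$.
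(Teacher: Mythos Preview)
Your high-level plan --- define a ``weight'' for each king, form the sum mod something, and have player $k$ check the hypothesis that the sum lies in his assigned block --- is exactly the right skeleton. But you have chosen the wrong weight function and the wrong modulus, and the resulting geometric step (ii) is not merely unverified: it appears to be false.

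Concretely, take your residue classes $0$ and $1$ of $x+y\pmod{11}$. These comprise the four anti-diagonals $x+y\in\{0,1,11,12\}$, a total of $22$ cells. Try to find a single queen covering all of them. If the queen sits on anti-diagonal $s=11$ (to sweep those ten cells for free), say at $(5,6)$, then its row, column, and main diagonal together pick up only a handful of cells from $s\in\{0,1,12\}$; in particular the corner $(0,0)$ is missed. Similar obstructions arise for other placements. A queen's four lines are just not rich enough to absorb two full residue classes of a single linear form on an $11\times11$ board, let alone three.

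The paper's proof avoids this entirely by taking the weight to be a value in $\ZZ_5$, not $\ZZ_{11}$. The key fact --- and the reason the board size is $11$ --- is that the queen domination number of the $11\times11$ board is $5$: five queens (e.g.\ at $b4$, $d10$, $f6$, $h2$, $j8$) attack every cell. Label each cell by the index of any one dominating queen that attacks it; this label is the weight. Now player $k$ checks the hypothesis ``sum of the five weights $\equiv k\pmod 5$''. This pins his own king's weight to a \emph{single} value $w$, and he simply places his queen on the $w$-th of the five dominating positions. No block of size $2$ or $3$ is needed, and the geometric claim becomes trivial by construction. This is precisely the generalization of Lemma~\ref{lemQueen4545}: there the two ``colors'' are really ``which of two dominating queens attacks me'', not an arithmetic form like $x+y\bmod 2$; you read the $4\times5$ coloring as a linear functional and tried to scale that, when what actually scales is the domination picture.
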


\begin{proof}
  On $11\times 11$ board, one can place 5 queens that keep all the cells under attack (for example, $b$4, $d$10, $f$6, $h$2 and~$j$8). During the game, the
  chess players will put their queens on the these 5 positions only.
  We number these positions from 0 to 4. In each cell of $11\times 11$
  board, we place the number of any of these queens, that holds
  this sell under attack. We assume that this labelling is applied all boards. When the referee puts the
  king on some cell of a board, the label of this cell is called the \emph{weight}
  of the king.

  The strategy of chess players is as follows: let the $k$-th player check the
  hypothesis that the sum of weights of all kings equals $k$ modulo~5. Each
  player sees all the kings except his own and calculates the weight of
  his king at which the hypothesis is correct. Then the player puts his queen on the
  position, which number equals the calculated weight.
\end{proof}

\subsection{Check with other chess pieces}

In the games Bishop check or Knight check the check declaration means
that the chess player guesses the color of the cell in which the king is standing. Therefore the chess
players can win in these games only on small boards, where all cells of each
color can be attacked from one point.

Consider the game King check (the referee puts on the board a ``good king'' and the chess
player puts on the board an ``evil king'' who must put the good one in check.

\begin{theorem}
  For the King check game on the boards $L(a\times b)$, $R(c\times d)$, denote by
  $\ell$ the number of elements in the maximal set of cells on $L(a\times b)$
  board, such that no two cells can be under attack of the same king. Define
  a number $r$ for  $R(c\times d)$ board analogously. Then the chess players win if and only if $\ell= r=2$ or one of the numbers $\ell$, $r$ equals~1.
\end{theorem}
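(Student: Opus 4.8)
The plan is to translate the King check game into the Hats language on the 4-cycle (as explained in the section) and then exploit the combinatorial meaning of the parameters $\ell$ and $r$. First I would observe that a king on an $a\times b$ board attacks a set of cells (its own cell plus the up-to-8 neighbors), so a strategy of chess player $\mathcal L$ consists of choosing, for each king-position the opponent may show, a single cell on $\mathcal L$'s board; the players win iff for every pair of king placements at least one king lies in the $3\times 3$ block around the chosen evil-king cell. The number $\ell$ is exactly the size of a largest ``king-independent'' set on $L(a\times b)$, i.e.\ a largest set of cells no two of which can be simultaneously covered by one evil king — equivalently a largest set of cells pairwise at Chebyshev distance $\ge 2$. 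A key elementary fact I would record first: $\ell=1$ iff the board has a cell dominating all cells, which happens exactly for boards with $\min(a,b)=1$ and $\max(a,b)\le 3$, or $a=b=2$ ... more simply, $\ell=1$ iff $a\le 3$ and $b\le 3$ fails only in the $1\times k$, $k\le 3$ and $2\times 2$, $3\times 3$ small cases; and $\ell=2$ for a moderate range of slightly larger boards, while $\ell\ge 3$ as soon as the board is big enough to host three mutually non-attacking kings (e.g.\ $a\ge 2,b\ge 5$ or $a\ge 3,b\ge 3$ with one side $\ge 4$, etc.). The precise description of which boards give $\ell=1$, $\ell=2$, $\ell\ge 3$ is a finite check and I would state it as a lemma.

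For the ``if'' direction I would split into two cases. If $\ell=1$, player $\mathcal L$ has a single cell dominating his whole board, so he always places the evil king there and wins unconditionally regardless of $\mathcal R$; symmetrically for $r=1$. If $\ell=r=2$, I mimic the argument of Lemma~\ref{lemQueen4545}: fix a king-independent pair of cells $\{p_0,p_1\}$ on $L$ whose $3\times 3$ blocks cover $L$ entirely (this is what $\ell=2$ together with winnability forces — and here I must verify that whenever $\ell=2$ the two cells can actually be chosen so their attack-blocks cover the board; this is the crux of the forward direction and I expect it to need a short case analysis of exactly which boards have $\ell=2$), and similarly $\{q_0,q_1\}$ on $R$. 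Assign to each cell of $L$ a bit $0$ or $1$ according to which of $p_0,p_1$ attacks it (choosing arbitrarily if both do), and likewise for $R$ using $q_0,q_1$. Let $\mathcal L$ play ``assume the two kings' bits are equal'' (so $\mathcal L$ reads $\mathcal R$'s king's bit $\beta$ and places the evil king on $p_\beta$), and $\mathcal R$ play ``assume the bits differ.'' For any placement, the true bits are either equal, in which case $\mathcal L$'s king is within one of $p_\beta$'s attack block and $\mathcal L$ wins, or unequal, in which case $\mathcal R$ wins; so the players win.

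For the ``only if'' direction: suppose $\ell\ge 3$ (the case $r\ge 3$ is symmetric, and by the standing convention $\ell\le r$ we may assume $\ell\ge 3$). Fix any strategy. Player $\mathcal R$'s strategy assigns to each of the (at most $ab$) king-positions he may see a single evil-king cell on $R$, hence uses at most $ab$ cells; but the relevant object is $\mathcal R$'s guessing map $g$ from $L$-king-positions to $R$-cells. Pick three mutually non-attacking cells $u,v,w$ on $L$. By Lemma~\ref{lem:chess-strategy} (the King-analogue: every $L$-cell is $u$-weak or $v$-weak or $w$-weak, since a king at $u$, at $v$, or at $w$ fails to attack most of the board and together they miss no cell only if...), more precisely: because $u,v,w$ pairwise do not attack each other, for every cell $z$ of $L$, a king on at least two of $u,v,w$ fails to attack $z$ — so $z$ is weak with respect to at least two of them. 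Hence, labelling $R$-cells by $\mathcal L$'s strategy as in Lemma~\ref{lem:chess-strategy}, every $R$-cell labeled $z$ must lie in the king-attack-block of $g(u)$ and of $g(v)$ and of $g(w)$ (two at a time), i.e.\ in the union of the pairwise intersections of the three $3\times3$ blocks centered at $g(u),g(v),g(w)$. Now I count: the union of the three pairwise intersections of three $3\times 3$ blocks contains at most $3\cdot 9=27$ cells but in fact far fewer once one notes two $3\times3$ blocks intersect in at most a $3\times3$ area only if they coincide, and in general in at most $3$ cells if the centers are at Chebyshev distance exactly $2$ and $0$ otherwise... this bounded set cannot cover all of $L$ once $L$ is large enough to contain three non-attacking kings, which is precisely when $\ell\ge 3$. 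Therefore some $L$-cell $z$ carries no label at all (or carries a label forced into the small covered region but $z$ itself lies outside), and the referee places $\mathcal L$'s king on such a $z$ and $\mathcal R$'s king on the one $R$-cell missed by all of $\mathcal R$'s crosses (which exists for the same counting reason on the $R$ side, using that $r\ge\ell\ge 3$) — giving a double non-check. Hence the players lose.

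The main obstacle I anticipate is making the counting in the ``only if'' direction clean and uniform: I must show that whenever a board admits three mutually non-attacking kings, the union of the pairwise intersections of any three $3\times 3$ attack-blocks cannot cover the board. I would handle this by the same trichotomy used in the Rook case (Lose1, Lose2): either the three block-centers are pairwise at Chebyshev distance $\ge 2$ in ``general position'' (each pairwise intersection is small, at most a handful of cells), or two centers are aligned (one coordinate equal, other differing by exactly $2$, giving a larger but still confined intersection occupying essentially one row or column), or two centers coincide (one block swallows another). In every case the covered set is confined to a bounded strip or a bounded neighborhood, while a board hosting three non-attacking kings is provably too spread out — I would close this with the explicit finite classification of boards with $\ell\le 2$ versus $\ell\ge 3$ and check the strip cannot cover any of the $\ell\ge 3$ boards.
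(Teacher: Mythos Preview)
One slip to fix first: ``pairwise at Chebyshev distance $\ge 2$'' must read $\ge 3$; a king placed midway between two cells at Chebyshev distance exactly $2$ attacks both.

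Your ``if'' direction coincides with the paper's: once $\ell=2$ forces $L$ to sit inside a $3\times 6$ board, your parity scheme with $\{p_0,p_1\}$ is exactly the paper's ``same half / different half'' strategy. For the ``only if'' direction, however, you are working far too hard. The paper dispatches it in one line: the referee announces he will only place kings on the cells of $S_L$ and $S_R$; since no single evil king can attack two cells of $S_L$ (that is the defining property of $S_L$), placing an evil king on $L$ amounts to guessing at most one element of $S_L$, and the restricted game is literally \textsc{Hats} on $P_2$ with hatnesses $\ell$ and $r$ --- losing unless $\ell=r=2$ or $\min(\ell,r)=1$. No block-intersection counting, no trichotomy. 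Your route through the king-analogue of Lemma~\ref{lem:chess-strategy} is also sound and closes far faster than you feared: pick $u,v,w$ on the side with parameter $\ge 3$; then every cell of the \emph{other} board must lie in at least two of the three $3\times 3$ blocks $B_{r_u},B_{r_v},B_{r_w}$; but two cells of that board at Chebyshev distance $\ge 3$ (which exist since its parameter is $\ge 2$) would then contribute $\ge 4$ incidences among $3$ blocks each of Chebyshev diameter $\le 2$ (so each containing at most one of the two cells), a contradiction by pigeonhole. So the case analysis you were bracing for is unnecessary. Your exposition also swaps $L$ and $R$ in a couple of places near the end; keep straight that the labels sit on $R$-cells and name $L$-cells.
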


\begin{proof}
  Choose sets $S_L$, $S_R$ of cells on $L(a\times b)$ and $R(c\times d)$ boards
  so that no two cells in these sets can be under attack of the same king,
  $|S_L|=\ell$, $|S_R|=r$. Let the referee make things easier for the chess players by promising that he will place
  the kings on the cells of the sets $S_L$ and $S_R$ only. Since the ``evil king''
  cannot attack two cells simultaneously, we may assume that the chess players
  just try to guess where the ``good king'' stands, or, which is the same, to
  guess hat colors the \hats game on the graph $P_2$ with hatnesses $\ell$, $r$, which 
  is possible only if $\ell= r=2$ or when one of the numbers $\ell$, $r$ equals~1.

  It remains to show that in these cases the chess players win. For
  $\ell=1$ or $r=1$ this is obvious. The maximal possible board for $\ell=r=2$ is
  $3\times 6$, because no two corner cells in  $4\times 4$ board as well as no two cells
  of $1\times 7$ board with coordinates 1, 4, 7 are attacked by the same king.
  On $3\times 6$ board, the chess players easily win by splitting te board into two
  halves of sizes $3\times 3$ and checking the hypotheses ``good kings are in the
  same/different halves''.
\end{proof}

\section{Analysis of \hats game on a cycle}

According to results of W.~Szczechla~\cite{cycle_hats}, the sages have some
difficulties in the game on cycle $C_n$ already in the case, when all hatnesses are
equal to 3. In that case, the winn of the sages is possible only if $n=4$ or $n$ is divisible by
3. If one of the sages on any cycle has hatness 4 (and all others have hatness 3), the sages lose ~\cite[corollary
8]{cycle_hats}.

The following theorem gives the list of games on cycles containing a vertex of hatness 2,
where the sages win.

\begin{theorem}\label{thm:cycle-win}
  Let $G$ be cycle $C_n$, and $h$ be the hat function such that $2
  \leq h(v) \leq 4$ for all vertices $v$. Let $A\in V(G)$,  $h(A) = 2$. Then the game $\HG=\langle {G, h} \rangle$ is winning in the following cases.
  \begin{enumerate}
    \item $n=3$;
    \item there is one more vertex with hatness $2$ other than $A$;
    \item both neighbors of vertex $A$ have hatness $3$;
    \item one neighbor of $A$ and the vertex following it are of hatness $3$.
  \end{enumerate}
\end{theorem}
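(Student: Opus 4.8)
The plan is to derive each of the four cases from a constructor already established in the excerpt, so that essentially no new strategy needs to be invented. Case (1): a 3-cycle is the complete graph $K_3$, and since $h(A)=2$ while $h(v)\le 4$ for the other two vertices, condition~\eqref{eq:clique-win} reads $\tfrac12+\tfrac1{h(v_2)}+\tfrac1{h(v_3)}\ge \tfrac12+\tfrac14+\tfrac14=1$, so the sages win by theorem~\ref{thm:clique-win}. Case (3): both neighbors $B$, $C$ of $A$ have hatness $3$; the remaining $n-1$ vertices form a path $P_{n-1}(CB)$ whose two endpoints $C$, $B$ will become hatness $3$ after we attach $A$, so it suffices to apply theorem~\ref{thm:addA2} to the winning game on that path. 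Concretely, the path $P_{n-1}(CB)$ with all interior vertices having hatness $\le 4$ and the two endpoints having hatness $2$ is winning by corollary~\ref{cor:path2442} (interior hatnesses may be reduced to $2,4,\dots,4,2$ since lowering hatness only helps); attaching $A$ via edges $AB$, $AC$ and theorem~\ref{thm:addA2} raises the endpoint hatnesses to $3$ and gives a winning game on $C_n$ with $h(A)=2$, $h(B)=h(C)=3$, all others $\le4$. This is exactly corollary~\ref{cor:cycle323}.

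For case (2), suppose $A'\ne A$ also has hatness $2$. Delete from the cycle one of the two edges incident to $A'$; what remains is a path with endpoints $A'$ and one of its former neighbors, and it contains $A$ somewhere in its interior (or at an end). All vertices of this path have hatness $\le4$, and two of its vertices — namely $A$ and $A'$ — already have hatness $2$. If $A$ and $A'$ are the two endpoints, corollary~\ref{cor:path2442} applies directly after lowering the interior hatnesses to $4$. If only $A'$ is an endpoint and $A$ is interior, I would instead cut the cycle at an edge incident to $A$, obtaining a path $P_n(A\,\cdot)$ with $A$ at one end; then pick the path's other endpoint to be a neighbor of $A'$ and note $A'$ is interior — here one applies theorem~\ref{thm:multiplication} (product) or, more cleanly, the ``pushing a hint'' remark following corollary~\ref{cor:path2442}: a hatness-$2$ vertex can be moved along a path of hatness-$4$ vertices, so having two hatness-$2$ vertices on a cycle of otherwise-$\le4$ vertices is winning. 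The cleanest uniform argument is: cut the cycle into a path $P_n$ whose two endpoints are $A$ and $A'$ is impossible in general, so instead cut at an edge not incident to either, getting a path whose endpoints have hatness $\le4$ but which contains both $A$ and $A'$ (hatness $2$) in its interior, and repeatedly apply theorem~\ref{thm:multiplication} against copies of $\langle P_2,2\rangle$ to absorb the two hatness-$2$ constraints; I will spell out the bookkeeping but expect it to be routine.

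Case (4) is the one that needs the more elaborate constructor. Here one neighbor of $A$, call it $B$, has hatness $3$, and the vertex $C$ following $B$ (on the side away from $A$) also has hatness $3$; write the cycle as $Z A B C \dots Z$ where $Z$ is the other neighbor of $A$. Deleting the edge $ZA$ and the vertices $A$, $B$ leaves — no, better: I would apply theorem~\ref{thm:addA2B3} directly. That theorem takes a winning game on $G$, a path $ZABC$ attached externally, and produces a winning game with $h(A)=2$, $h(B)=3$, $h(Z)=2h(Z)$, $h(C)=h(C)+1$. Taking $G$ to be the path $P_{n-2}(ZC)$ (the cycle minus $A$ and $B$) with hatnesses $2,4,\dots,4,2$, which is winning by corollary~\ref{cor:path2442}, the construction closes it into the cycle $C_n$ with $h(A)=2$, $h(B)=3$, $h(C)=3$, $h(Z)=4$, and all other vertices $\le4$; lowering any over-large hatness back down preserves winning. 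This is precisely corollary~\ref{cor:cycle_233}. The main obstacle is purely organizational: making sure in each case the cycle is cut at the right edge so that the hatness-$2$ and hatness-$3$ vertices land in the positions demanded by the relevant constructor, and checking that reducing the remaining hatnesses from their given values ($\le4$) down to the values $2,4,\dots,4,2$ needed to invoke corollary~\ref{cor:path2442} is legitimate (it is, since a winning game stays winning when any hatness is decreased). No genuinely new combinatorics is required.
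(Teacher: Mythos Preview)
Your treatment of cases (1), (3), and (4) is correct and matches the paper's proof: it too reduces to the maximal hat functions via monotonicity and then invokes, respectively, the clique inequality (or corollary~\ref{cor:triangle244-win}), corollary~\ref{cor:cycle323}, and corollary~\ref{cor:cycle_233}.

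Case (2), however, you have over-engineered to the point of never reaching a clean argument. You keep trying to cut the cycle into a \emph{spanning} path $P_n$ and then struggle because $A$ and $A'$ cannot both be endpoints unless they are adjacent. Your fallback ideas --- pushing hints, repeated products with $\langle P_2,2\rangle$, cutting at an edge incident to neither --- are not made precise, and the last one in particular leaves you with a path whose endpoints have hatness $\le 4$, to which corollary~\ref{cor:path2442} does not apply. The paper's argument is a single sentence: the cycle $C_n$ \emph{contains as a subgraph} one of the two arcs from $A$ to $A'$, and this arc is a path with both endpoints of hatness $2$ and interior hatnesses $\le 4$; by monotonicity and corollary~\ref{cor:path2442} the sages win on that sub-path, hence on the whole cycle (sages outside the sub-path say anything). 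The point you are missing is that you are free to pass to a proper subgraph --- there is no need for the path to span all $n$ vertices.
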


\begin{proof}
  If $h_1(v)\leq h_2(v)$ for all $v\in V(G)$, then the winning in $\langle
  {G, h_2} \rangle$ implies the winning in $\langle {G, h_1} \rangle$, or, which is
  the same, the losing in $\langle {G, h_1} \rangle$ implies the losing in $\langle {G,
    h_2} \rangle$. This is obvious, because the winning strategy for $\langle {G, h_2}
  \rangle$ can be used as a winning strategy for $\langle {G, h_1} \rangle$, in
  which instead of ``non-existing'' colors the sages say any 
  ``existing''ccolor. Therefore, to prove the theorem, it suffices to check the winning 
  for the cases when the hat function is ``maximal'' (in the sense of definition in subsection~\ref{subsubsec:product}).

For each statement of the theorem, we give below the maximal hat functions and
the proofs that the sages win. We recall $h_4^{A2B2}$  is the hat
function, which values are equal to 4 in all the vertices other than $A$ and $B$,
where $h(A)=2$, $h(B)=2$.

\begin{enumerate}
\item $C_3$ with hatnesses 2, 4, 4. The sages win by
  corollary~\ref{cor:triangle244-win}.

\item Game $\langle {C_n, h_4^{A2B2}} \rangle$ is winning, because  it
  contains a path with hatnesses $2,4,\dots,4,2$, where the sages win by
  corollary~\ref{cor:path2442}.

\item The game $\langle {C_n, h_4^{A2B3C3}} \rangle$, where $B$ and $C$ are the
  neighbors of $A$ is winning by corollary~\ref{cor:cycle323}.
\item The game $\langle {C_n, h_4^{A2B3C3}} \rangle$, where $A$, $B$, $C$ are
  three consequent vertices, is winning by corollary~\ref{cor:cycle_233}.
\end{enumerate}
\end{proof}

\begin{conjecture}
  Let $G$ be a cycle $C_n$ and let $h$ be a hat function such that $2
  \leq h(v) \leq 4$ for every vertex $v$. Let $A\in V(G)$ be
  such that $h(A) = 2$. Then the game $\HG=\langle {G, h} \rangle$ is winning only
  in the cases listed in theorem~\ref{thm:cycle-win}.
\end{conjecture}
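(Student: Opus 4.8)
The plan for attacking this conjecture is to prove the converse of Theorem~\ref{thm:cycle-win}: every game $\HG=\langle C_n,h\rangle$ with all hatnesses in $\{2,3,4\}$ and a vertex $A$ of hatness $2$ that fits none of the cases (1)--(4) is losing. The first step is the monotonicity observation from the proof of Theorem~\ref{thm:cycle-win}: raising a hatness keeps a losing game losing, so (among games with $h(A)=2$) the hat functions fitting none of (1)--(4) form an up-set, and it is enough to verify that the componentwise \emph{minimal} such hat functions are losing. Since fitting none of the cases forces $A$ to be the unique vertex of hatness $2$, every other vertex has hatness $3$ unless the constraints from (3)--(4) force it up; a short case analysis of the neighbours $B,C$ of $A$ and the vertices $B',C'$ following them shows that, up to reflection of the cycle, the minimal configurations are exactly: for $n=4$, the cycles with hatnesses $2,4,3,4$ and $2,3,4,4$ in cyclic order from $A$; and for $n\ge 5$, the cycle with hatnesses $2,4,3,\dots,3,4$ (both $4$'s adjacent to $A$) and the cycle with hatnesses $2,3,4,3,\dots,3,4$ (one neighbour of $A$ of hatness $3$, the vertex after it of hatness $4$, the other neighbour of hatness $4$). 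It then remains to prove each of these games is losing.

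For $n=4$ this is immediate from the Rook check correspondence established above: $\langle C_4,(2,4,3,4)\rangle$ is Rook check on $L(2\times 3)$, $R(4\times 4)$, losing by case~\ref{itm:lose2344} of Theorem~\ref{thm:Rook_chess}, and $\langle C_4,(2,3,4,4)\rangle$ is Rook check on $L(2\times 4)$, $R(3\times 4)$, losing by case~\ref{itm:lose2434}. So the substance lies in the two families with $n\ge 5$.

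For $n\ge 5$ the idea is to break the cycle at $A$. Fix any strategy on $C_n$ and split the arrangements by the colour $\varphi(A)\in\{0,1\}$; since $A$ is seen only by $B$ and $C$, for each value $a$ the other sages run a fixed strategy $\sigma_a$ on the path $P_{n-1}(BC)=C_n\setminus\{A\}$, whose hatnesses are $4,3,\dots,3,4$ in the first family and $3,4,3,\dots,3,4$ in the second. Because $h(A)=2$, for every pair $(b,c)$ of colours of $B$ and $C$ the sage $A$ fails on exactly one of the two values of $a$, so the original strategy is winning if and only if for each $(b,c)$ at least one of $\sigma_0,\sigma_1$ wins on \emph{every} path arrangement with $\varphi(B)=b$, $\varphi(C)=c$. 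Writing $\mathrm{Cov}(\sigma)\subseteq[h(B)]\times[h(C)]$ for the set of ``fully covered'' endpoint pairs of a path strategy $\sigma$, the cycle game is losing exactly when $\mathrm{Cov}(\sigma_0)\cup\mathrm{Cov}(\sigma_1)$ can never be all of $[h(B)]\times[h(C)]$. Everything thus reduces to a combinatorial bound on $\mathrm{Cov}(\sigma)$: one would like $|\mathrm{Cov}(\sigma)|\le 7$ in the first family (two copies then miss one of the $16$ pairs) and $|\mathrm{Cov}(\sigma)|\le 5$ in the second (two copies then miss one of the $12$ pairs), or, if these crude counts fail, a rigidity statement that the extremal $\mathrm{Cov}(\sigma)$ have a ``one whole line plus a few cells'' shape which two of them cannot tile --- in the exact spirit of the proofs of the losing cases in Theorem~\ref{thm:Rook_chess}. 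To estimate $\mathrm{Cov}(\sigma)$ one fixes $b$ and $c$: then $B$ and $C$ contribute only through the neighbour-colours on which their now one-variable strategies happen to output $b$, resp.\ $c$, and on the complement the shorter interior path (of bounded hatness) must win unaided; since such a path is losing, the interior can absorb only a controlled portion of each slice.

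The main obstacle is making this estimate \emph{uniform in $n$}: a longer path supplies more hatness-$3$ interior sages, so no fixed fraction argument suffices, and one needs an induction on the path length (or a suitable potential) showing that the hatness-$4$ endpoints of the path cannot be exploited to make $\mathrm{Cov}(\sigma)$ grow with $n$. Carrying this out for all $n\ge 5$ --- rather than for any bounded range, which the SAT reduction of~\cite{Kokhas2018} already resolves --- is precisely what is missing, and is why the statement is posed only as a conjecture; a complete proof will most likely hinge on a sharp description of which endpoint-colour slices a path with hatnesses $4,3,\dots,3,4$, resp.\ $3,4,3,\dots,3,4$, is able to win.
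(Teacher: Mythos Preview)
Your reduction to the two minimal families is exactly what the paper does: it states that to prove the conjecture it suffices to show that $\langle C_n,\,h_3^{A2B4C4}\rangle$ (your pattern $2,4,3,\dots,3,4$) and $\langle C_n,\,h_3^{A2B4C3D4}\rangle$ (your pattern $2,3,4,3,\dots,3,4$, up to reflection) are losing, and it handles $n=4$ via the same two Rook check cases \ref{itm:lose2344} and \ref{itm:lose2434} of Theorem~\ref{thm:Rook_chess}. So on the part that is actually settled, you and the paper agree completely.

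The key point is that this statement is a \emph{conjecture}: the paper does not prove it either. For $n\ge 5$ the paper offers no argument at all beyond noting that the cases $n\le 7$ were checked by computer via the SAT reduction, and it explicitly says ``we have no proof of this fact'' for $n\ge 8$. Your $\mathrm{Cov}(\sigma)$ framework is therefore not a competing proof but a genuine proposal that goes further than the paper's discussion. The reformulation is sound: fixing $A$'s colour $a\in\{0,1\}$ determines a path strategy $\sigma_a$ on $P_{n-1}(BC)$, and the cycle strategy wins iff $\mathrm{Cov}(\sigma_0)\cup\mathrm{Cov}(\sigma_1)=[h(B)]\times[h(C)]$. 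One refinement you should record: $\sigma_0$ and $\sigma_1$ are not independent path strategies, since every sage other than $B$ and $C$ is blind to $A$'s colour and hence plays identically in both; only the endpoint rules differ. For proving \emph{losing} this only helps (you are free to relax to arbitrary pairs $\sigma_0,\sigma_1$ when seeking an upper bound on the union), but it is worth keeping in mind if the crude cardinality bounds $|\mathrm{Cov}(\sigma)|\le 7$ resp.\ $\le 5$ turn out to be false and you need the extra rigidity.

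As you yourself say, the substantive gap is exactly the uniform-in-$n$ bound on $|\mathrm{Cov}(\sigma)|$ (or a structural description of the extremal $\mathrm{Cov}(\sigma)$) for the paths $4,3,\dots,3,4$ and $3,4,3,\dots,3,4$. That is the open problem; neither you nor the paper closes it.
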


To prove the conjecture it suffices to prove that the following two games are losing.

\begin{enumerate}
\item $\langle {C_n, h_3^{A2B4C4}} \rangle$ $(n \geq 4)$, where sages $B$
  and $C$ are the neighbors of sage $A$. The loss in this game for $n=4$ is
  proved in theorem~\ref{thm:Rook_chess},~\ref{itm:lose2344} in the language
  of Rook check game. For $n\leq 7$ the loss was checked on computer by
  reduction to  SAT~\cite{Kokhas2018}. This allows us to assume
  that for $n\geq 8$ the game is losing too, but we have no proof of this fact.

\item $\langle {G_n, h_3^{A2B4C3D4}} \rangle$ $(n \geq 4)$, where the sages $B$
  and $C$ are the neighbors of sage $A$, and sage $D\ne A$ is the second neighbor of
  sage $C$. The loss of this game for $n=4$ is proved in
  theorem~\ref{thm:Rook_chess},~\ref{itm:lose2434}. For $n\leq 7$ the loss was
  checked by computer. This allows us to assume that for $n\geq 8$ the game
  is losing, but we still have no proof of this fact too.
\end{enumerate}

\section{Conclusion}

In the present paper we certainly prove that the variation of \hats game in
question is a real gem of combinatorics.  The firework of ideas that arise when considering
different approaches to the game is mesmeriring and awakens the imagination. In the
same time, the computational complexity of the game prevents from putting forward hasty
conjectures and effectively protects the game from a complete analysis.

\printbibliography

\end{document}